\documentclass[11pt]{amsart}
\usepackage{amsmath,amssymb,amsthm,mathtools}
\usepackage[colorlinks=true,linkcolor=blue,citecolor=blue,urlcolor=blue]{hyperref}

\theoremstyle{plain}
\newtheorem{theorem}{Theorem}[section]
\newtheorem{lemma}[theorem]{Lemma}
\newtheorem{proposition}[theorem]{Proposition}
\newtheorem{corollary}[theorem]{Corollary}
\theoremstyle{definition}
\newtheorem{definition}[theorem]{Definition}
\newtheorem{example}[theorem]{Example}
\newcommand{\Fp}{\overline{\mathbf F}_{p}}
\newcommand{\PP}{\mathbf P}
\newcommand{\length}{\ell}
\numberwithin{equation}{section}

\title[Hurwitz existence in prime degree]
{The Hurwitz existence problem in prime degree}
\author{Jijian Song, Hailin Wen, Zebao Zhang}
\date{\today}
\subjclass[2020]{Primary 14H30; Secondary 11G20, 14N35, 20B25, 30F10}
\keywords{branched cover, Hurwitz existence problem, prime degree,
stable reduction, deformation datum, Hurwitz space, relative
Gromov--Witten invariant}
\hypersetup{
  pdftitle={The Hurwitz existence problem in prime degree}
}

\begin{document}
\begin{abstract}
Let $p$ be a prime.  We prove that every compatible branch datum of degree
$p$ over the sphere is realizable by a connected branched cover.  The
three-point case is constructed in residue characteristic $p$.  Henrio's
moment theorem supplies the distinct-point moment solutions from which we
construct a special primitive tail for each prescribed partition; a second
application underlies the new tail required by a positive source genus.
These tails are joined by a logarithmic deformation datum and embedded in
one subgroup of $S_p$ containing a common regular subgroup of order $p$.
Wewers's lifting theorem produces a three-point Galois cover in
characteristic zero.  The quotient by a point stabilizer has degree $p$ and
the prescribed three ramification profiles.  The fusion and realization
results of Edmonds--Kulkarni--Stong then give the assertion for an arbitrary
number of branch values.  As consequences, the connected prime-degree
Hurwitz potential has full support on the Riemann--Hurwitz locus, every
corresponding connected relative Gromov--Witten invariant of $\mathbf P^1$
is nonzero, the two-relative-point disconnected sector with even
completed-cycle orders at most $p$ is strictly positive subject to the
dimension constraint, and the connected transposition sector is strictly
positive.
\end{abstract}
\maketitle

\section{Introduction}

For a partition $\Lambda=(e_1,\ldots,e_r)\vdash d$, define its defect by
\[
 v(\Lambda)=d-r=\sum_{j=1}^{r}(e_j-1).
\]
A branch datum of degree $d$ over the sphere is a finite ordered family of
nonidentity partitions $\Lambda_1,\ldots,\Lambda_k\vdash d$.  It is
\emph{compatible} with an orientable source of genus $g$ if
\begin{equation}\label{eq:RH-intro}
 \sum_{i=1}^{k}v(\Lambda_i)=2d-2+2g.
\end{equation}
Indeed, the Riemann--Hurwitz formula reads
$2-2g=2d-\sum_{i,j}(e_{ij}-1)$, and the inner sum is
$v(\Lambda_i)$.

\begin{theorem}\label{thm:main}
Let $p$ be a prime.  Every branch datum of degree $p$ satisfying
\eqref{eq:RH-intro} is realized by a connected branched cover
$\Sigma_g\to S^2$.
\end{theorem}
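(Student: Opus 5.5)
The plan has two stages. First we reduce an arbitrary number of branch values to three. Then we settle the three-point case by building a Galois cover in characteristic $p$ and lifting it to characteristic zero.

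\emph{Stage 1: reduction to $k=3$.}
If $k\le 2$, the compatibility condition \eqref{eq:RH-intro} forces $g=0$ and $\Lambda_1=\Lambda_2=(p)$. This datum is realized by $z\mapsto z^p$.
For $k\ge 4$ we invoke the fusion and realization results of Edmonds--Kulkarni--Stong. These say that a compatible datum with $k\ge 3$ branch values is realizable once a suitable fused datum with fewer branch values is realizable. Fusion replaces two partitions by one partition of total defect $v(\Lambda_i)+v(\Lambda_j)-2h$, with $h\ge0$ a genus drop, so compatibility is preserved.
In prime degree the obstructions of Edmonds--Kulkarni--Stong, which all come from nontrivial factorizations $d=ab$, are absent. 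Hence the reduction goes through and lands on a compatible three-point datum $(\Lambda_1,\Lambda_2,\Lambda_3)$ of some genus $g$.

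\emph{Stage 2: the three-point case.}
In terms of permutations, we must find $\sigma_1,\sigma_2,\sigma_3\in S_p$ with cycle types $\Lambda_i$, satisfying $\sigma_1\sigma_2\sigma_3=1$ and generating a transitive subgroup. Following the abstract, I would not search for such permutations directly. Instead I would construct a three-point $G$-Galois cover of $\PP^1$ over $\Fp$ in the form of a stable reduction: an original component carrying a regular subgroup $C_p\subset G\subset S_p$, with tails attached above the three branch points.

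\emph{Step 1: one tail per partition.}
For each $\Lambda_i$, use Henrio's moment theorem to obtain distinct-point solutions of the moment equations. From these solutions build a special primitive tail whose inertia, acting on $G/\mathrm{Stab}$, has cycle type $\Lambda_i$.

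\emph{Step 2: the extra tail for $g>0$.}
When $g>0$, apply the moment theorem a second time to produce the additional new tail that carries the source genus.

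\emph{Step 3: gluing.}
Join the tails by a logarithmic deformation datum on the original component. The conductors and residues must match the Riemann--Hurwitz count \eqref{eq:RH-intro}. All tails must also embed into one common subgroup $G\subset S_p$ containing the same regular $C_p$.

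\emph{Step 4: lifting.}
Apply Wewers's lifting theorem to lift this special datum to a three-point $G$-Galois cover $Y\to\PP^1$ in characteristic zero.

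\emph{Step 5: passing to degree $p$.}
Take the quotient $X=Y/H$, where $H$ is a point stabilizer. This $X$ has degree $[G:H]=p$ over $\PP^1$. Its ramification profiles over the three points are the cycle types of the inertia generators, namely $\Lambda_i$. It is connected because $G$ is transitive, being generated by its inertia subgroups. Riemann--Hurwitz then gives genus $g$.

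\emph{Main obstacle.}
I expect the hardest part to be Steps 1--3. One must produce, for every prescribed partition, a tail with the right permutation inertia. One must also check that the tails' ramification invariants satisfy the global compatibility conditions that a logarithmic deformation datum needs: the sum of the residue and conductor conditions has to come out right. For positive genus, the new tail has to absorb exactly the excess defect.

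My first attempt would be to make the conductors of the $\Lambda_i$-tails small and explicit through the moment solutions. I would then choose the deformation datum as $\omega=\prod_j(x-a_j)^{m_j}\,dx$ with prescribed orders $m_j$, and solve for its residues using the distinct-point freedom supplied by Henrio's theorem.
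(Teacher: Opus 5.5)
Your outline has the same architecture as the paper: EKS reduction to three points, Henrio-based primitive tails, a second Henrio application for the new tail, a logarithmic central datum, a common regular $C_p\leq S_p$, Wewers's lifting, and the point-stabilizer quotient. But the steps you flag as the main obstacle are left as black boxes, and your one concrete suggestion for the central datum would fail.

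\textbf{The central datum.} A differential $\omega=\prod_j(x-a_j)^{m_j}\,dx$ on $\mathbf P^1$ itself has integer invariants $\sigma_\tau=\operatorname{ord}_\tau(\omega)+1$. Each primitive point, however, must have $\sigma_i=(r_i-1)/(p-1)\in(0,1)$, with integer type $(m_i,h_i)$ equal to that of its tail, where $m_i=(p-1)/\gcd(p-1,r_i-1)>1$. The compatibility $H_j=\operatorname{Stab}_{H_0}(\xi_j)$ therefore forces the datum to live on a nontrivial tame cyclic cover $Z_0\to\mathbf P^1$.
\begin{itemize}
\item The paper takes $Z_0$ of degree $M=(p-1)/D$, where $D=\gcd(p-1,r_1-1,r_2-1,r_3-1)$, with $2g$ adjoined if $g>0$. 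The Kummer exponents are $a_i=(r_i-1)/D$, and $a_4=2g/D$ when $g>0$. The differential $\omega$ is a $\chi_0$-eigendifferential on $Z_0$.
\item In genus zero, $\omega=z\,dx/(x(x-1))$ with $z^M=x^{a_1}(x-1)^{a_2}$. It is made Cartier-fixed by rescaling, because the eigenspace is a line on which Cartier is bijective.
\item In positive genus you need $\mathcal C(\omega)=\omega$ together with extra vanishing at a fourth point where $\sigma_4=1+2g/(p-1)$. This is a genuine existence problem that pins the fourth point to finitely many positions. The paper gets a solution from Wewers's count of $2g-1\geq1$ special degeneration data.
\end{itemize}
The ``distinct-point freedom supplied by Henrio's theorem'' plays no role in this step. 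Henrio's points are coordinates on the tails, while the original component is normalized with primitive points $0,1,\infty$.

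\textbf{The tails.} Steps 1--2 likewise need the actual constructions.
\begin{itemize}
\item The moment equations $\sum_je_ja_j^q=0$ for $0\leq q\leq r-2$ are exactly what makes $P_\Lambda=\prod_j(y-a_j)^{e_j}$ satisfy $P_\Lambda'=c\prod_j(y-a_j)^{e_j-1}$. So $P_\Lambda$ is a degree-$p$ map branched only over $0$ (profile $\Lambda$) and over $\infty$ (totally and wildly).
\item The different at $\infty$ gives $p+r-2=(p-1)(1+h/m)$. Together with $\gcd(h,m)=1$, this yields the type that must match the central datum.
\item The new tail comes from applying Henrio to $(2^n,1^{2g+1})$ with $n=(p-1)/2-g$, and setting $u=H(x)y$ on $y^2=f(x)$. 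Then $du=(c/2)\,dx/y$, so $u$ is a degree-$p$ map from a genus-$g$ curve, \'etale over $\mathbf A^1$.
\end{itemize}

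\textbf{Smaller points.}
\begin{itemize}
\item Stage 2 cannot treat a profile $[p]$, which would be a wild critical point with $\sigma=0$. Three-point data containing $[p]$, and the case $p=2$, must be settled by EKS or directly.
\item The EKS reduction rests on specific product lemmas and on realizability of data containing $[p]$ or $[p-1,1]$. It does not rest on an ``absence of factorizations $d=ab$''.
\item Transitivity of $G$ comes from $P\leq G$. Connectedness of $Y$ comes from $G=\langle G_0,G_j\rangle$ in the gluing, not from $G$ being generated by inertia.
\end{itemize}
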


The assertion was formulated by Edmonds--Kulkarni--Stong
\cite[p.~787]{EKS}.  Chenakkod--Faraco--Gupta
\cite[Corollary~9.6]{ChenakkodFaracoGupta} prove the case in which all but
one profile are pure cycles.  Ciliberto--Knutsen--Torelli
\cite[Theorem~0.1]{CilibertoKnutsenTorelli} give an alternative proof of the
subcase in which every profile is pure-cycle, with prescribed branch points.
Further partial results and structural criteria appear in
\cite{PascaliPetronio,BaroniPetronio,PakovichFiber,WeiRational}.
Wang et al. give a complete nonredundant enumeration of non-realizable
partition triples through degree $31$ and thereby verify the prime-degree
conjecture for primes below $32$ \cite{WangComputational}.  As a dated record
of the state of the problem, Meng--Wei--Zhou described the prime-degree
conjecture as unresolved in May 2026 \cite[Introduction]{MengWeiZhou}.  A
subsequent criterion of Wei treats a specified composite-degree
three-profile family \cite{WeiThreeBranch}.

The proof uses stable reduction in residue characteristic $p$.  Henrio's
moment criterion
\cite[Definition~3.14 and Proposition~3.16(b)]{Henrio} supplies pairwise
distinct points satisfying the required moment equations.  We use these
points to construct primitive tails with arbitrary prescribed tame profiles.
We assemble these tails into a special $G$-deformation datum in Wewers's
sense and then apply Wewers's lifting theorem
\cite{WewersMeta,WewersThree} to obtain a three-point cover in
characteristic zero.  Bouw
\cite[Section~3, especially the construction preceding Lemma~3.2 and
Lemma~3.2]{BouwAffine} constructs primitive tails with a prescribed finite
tame profile in the range in which its length is at most $(p+1)/2$.
Common-$p$-cycle assemblies followed by a point-stabilizer quotient occur
for pure-cycle genus-zero data in
\cite[Construction~4.2.3 and Propositions~4.2.4--4.2.5]{Eskin}.
Prescribed-type deformation data and the corresponding local lifting
problem are treated in \cite{BouwWewersZapponi}; related four-point
calculations appear in \cite{BouwOsserman}.  For $g=1$, the invariant
$1+2/(p-1)$ of the one-point construction below is the invariant in
\cite[Proposition~3.3]{BouwAffine}; Bouw proves that proposition by a
quadratic base change.

After proving Theorem~\ref{thm:main}, we give two moduli-theoretic
consequences.  First, every compatible prime-degree passport determines a
nonzero Hurwitz cycle with full support over the configuration space of its
branch values.  Second, the Gromov--Witten/Hurwitz correspondence of
Okounkov--Pandharipande \cite{OkounkovPandharipande} converts the main
theorem into positivity statements for relative stationary invariants of
$\mathbf P^1$.  With no stationary insertions, their theorem identifies the
connected relative virtual degree with the connected Hurwitz number for
every passport.  With insertions, the nonnegative nonconstant coefficients
of even completed cycles give a strictly positive two-relative-point
disconnected sector when their orders are at most $p$ and the dimension
constraint holds.  In the transposition direction, where no completion
correction occurs, taking the logarithm gives a strictly positive connected
sector.  The ordinary
fundamental cycle of a Hurwitz stack and the virtual class of the relative
stable-map compactification are kept distinct.

\section{Branched covers and permutation data}

\begin{definition}
Let $X$ and $B$ be closed connected oriented surfaces.  An
orientation-preserving branched cover $f:X\to B$ of degree $d$ is a
continuous surjection for which every point $x\in X$ has oriented local
coordinates in which $f$ is $z\mapsto z^{e_x}$ for an integer $e_x\geq1$.
The integer $e_x$ is the local degree.  If
$f^{-1}(b)=\{x_1,\ldots,x_r\}$, then
\[
 \operatorname{prof}_b(f)=(e_{x_1},\ldots,e_{x_r})\vdash d
\]
is the ramification profile above $b$.  A branch value is a point for which
at least one $e_{x_j}$ exceeds $1$.  A branch datum is realized if its
entries, with their multiplicities, are the profiles over the branch values
of such a cover with connected source.
\end{definition}

Let $b_1,\ldots,b_k$ be distinct points of $\mathbf P^1(\mathbf C)$, put
$U=\mathbf P^1-\{b_1,\ldots,b_k\}$, and choose positively oriented
peripheral loops $\gamma_i$ with $\gamma_1\cdots\gamma_k=1$.

\begin{theorem}[Hurwitz monodromy criterion]\label{thm:hurwitz-criterion}
Let $\Lambda_i\vdash d$.  There exists a connected degree-$d$ branched
cover of $\mathbf P^1(\mathbf C)$ with branch values $b_i$ and profiles
$\Lambda_i$ if and only if there are permutations
$\sigma_i\in S_d$ such that
\[
 \operatorname{type}(\sigma_i)=\Lambda_i,\qquad
 \sigma_1\cdots\sigma_k=1,
\]
and $\langle\sigma_1,\ldots,\sigma_k\rangle$ is transitive on
$\{1,\ldots,d\}$.
\end{theorem}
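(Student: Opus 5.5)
The plan is to pass through covering space theory of the punctured sphere $U$, and then use Riemann's existence theorem to compactify and to go back and forth between topological branched covers and holomorphic ones.

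\emph{Setup.} Fix a base point $u_0\in U$. The fundamental group $\pi_1(U,u_0)$ is free on $\gamma_1,\ldots,\gamma_{k-1}$, and $\gamma_k=(\gamma_1\cdots\gamma_{k-1})^{-1}$. Hence $\pi_1(U,u_0)$ has the presentation $\langle\gamma_1,\ldots,\gamma_k\mid\gamma_1\cdots\gamma_k=1\rangle$. Degree-$d$ unramified covers of $U$, with the fiber over $u_0$ labeled by $\{1,\ldots,d\}$, correspond bijectively to homomorphisms $\rho:\pi_1(U,u_0)\to S_d$. The correspondence sends a cover to its monodromy action on the fiber. One must fix a convention for composing paths, and either convention works after possibly replacing $\rho$ by $\rho^{-1}$ composed with inversion. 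Under this correspondence the cover is connected exactly when the image of $\rho$ is transitive.

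\emph{Local analysis at a puncture.} Take a small punctured disc $D_i^*$ around $b_i$ and join it to $u_0$ by a path, so that $\gamma_i$ becomes the boundary loop of $D_i^*$. The restriction of the cover over $D_i^*$ is then a disjoint union of connected covers of $D_i^*$. These components correspond to the orbits of $\sigma_i=\rho(\gamma_i)$. A connected degree-$e$ cover of a punctured disc is isomorphic to $z\mapsto z^e$, so each component can be filled in by a single point with local degree $e$. It follows that the profile over $b_i$ is exactly $\operatorname{type}(\sigma_i)$.

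\emph{Main step: compactification.} Filling in every puncture in this way gives a compact surface $X$ and a branched cover $X\to\mathbf P^1$. Riemann's existence theorem says $X$ carries a unique complex structure making the map holomorphic. This gives the direction from permutations to covers: the permutations $\sigma_i$ define $\rho$, the cover is connected because the group generated is transitive, and the profiles are $\Lambda_i$. For the converse, start with a connected branched cover $f$ with branch values $b_i$. Restrict $f$ to the preimage of $U$; this is an unramified covering of degree $d$ whose total space is connected, because removing finitely many points does not disconnect a surface. Its monodromy gives permutations $\sigma_i$ with product $1$, since the $\gamma_i$ have product $1$ in $\pi_1(U,u_0)$. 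The generated group is transitive by connectedness, and the local analysis shows the cycle types are the profiles.

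\emph{Expected obstacle.} The only nonformal input is the compactification together with the identification of the local model over a punctured disc. The remaining steps are formal bookkeeping about base points and path-composition conventions, which affect only whether one uses $\rho$ or $\rho^{-1}$ and whether the product is read as $\sigma_1\cdots\sigma_k$ or $\sigma_k\cdots\sigma_1$. Since both products equal the identity, the conclusion is the same either way. For purely topological covers, as in the paper's definition, the holomorphic structure is not needed: the gluing of discs is done directly.
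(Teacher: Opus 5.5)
Your proposal is correct and follows essentially the same route as the paper. Both arguments use the monodromy of the unramified restriction over $U$, identify the cycles of $\sigma_i$ with the components over a small punctured disc (each modeled on $z\mapsto z^e$), and conversely build the covering of $U$ from the homomorphism and glue in discs by $z\mapsto z^e$. Two small remarks: Riemann's existence theorem is not needed for the paper's topological notion of branched cover, as you note yourself. The path-convention issue is best handled by replacing each $\sigma_i$ by $\sigma_i^{-1}$ (same cycle type, same generated group), not by asserting that both orderings of the product are trivial, since $\sigma_k\cdots\sigma_1=1$ does not imply $\sigma_1\cdots\sigma_k=1$ in general.
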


\begin{proof}
Fix a point $u\in U$ and label its fiber by $\{1,\ldots,d\}$.  Path lifting
defines a homomorphism $\rho:\pi_1(U,u)\to S_d$.  A connected covering has
transitive monodromy: two labels lie in the same orbit precisely when their
points in the fiber lie in the same connected component of the covering
space.  The presentation
\[
 \pi_1(U,u)=
 \langle\gamma_1,\ldots,\gamma_k\mid
   \gamma_1\cdots\gamma_k=1\rangle
\]
gives $\rho(\gamma_1)\cdots\rho(\gamma_k)=1$.  If a cycle of
$\rho(\gamma_i)$ has length $e$, the corresponding component above a small
punctured disk about $b_i$ is the connected degree-$e$ cover of that
punctured disk.  Completing it by one point gives the local model
$z\mapsto z^e$.  Hence the cycle lengths are the local degrees.

Conversely, permutations satisfying the product relation define a
homomorphism from the displayed presentation to $S_d$.  The standard
covering-space construction gives a degree-$d$ covering of $U$, and
transitivity makes it connected.  For every cycle of length $e$ in
$\sigma_i$, attach a disk to the associated boundary component by the map
$z\mapsto z^e$.  The result is a closed oriented surface and an
orientation-preserving branched cover with the required profiles.  This is
the form of the Hurwitz criterion used in \cite[Lemma~2.1]{EKS}.
\end{proof}

\begin{lemma}[Profiles in an intermediate quotient]
\label{lem:quotient-profile}
Let $Y\to B$ be a connected $G$-Galois cover of smooth curves over an
algebraically closed field, let $H\leq G$, and put $X=Y/H$.  Fix a point
$y\in Y$ and let $I\leq G$ be its inertia group.  The points of $X$ above
the image of $y$ correspond to the $I$-orbits on $G/H$.  The local degree at
the point corresponding to the orbit of $gH$ is
\[
 [I:I\cap gHg^{-1}],
\]
which is the cardinality of that orbit.  If the ramification is tame, then
$I$ is cyclic and the profile of $X\to B$ is the cycle partition of any
generator of $I$ acting on $G/H$.
\end{lemma}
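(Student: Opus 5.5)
The plan is to pass to the Galois closure and reduce everything to an orbit count for the inertia group acting on cosets. Let $b\in B$ be the image of $y$ and $\pi:Y\to X$ the quotient map. Since $G$ acts transitively on the fiber of $Y\to B$ over $b$, I will identify that fiber with $G/I$ via $g\mapsto g\cdot y$ (using $gy=g'y$ iff $g^{-1}g'\in I$). The fiber of $X\to B$ over $b$ is the set of $H$-orbits on this fiber, that is, the double coset space $H\backslash G/I$. Inversion $HgI\mapsto Ig^{-1}H$ gives a bijection with $I\backslash G/H$, i.e.\ with the $I$-orbits on $G/H$. This bijection is why the point of $X$ with representative $g^{-1}y$ corresponds to the orbit of $gH$. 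I will adopt this labeling throughout and check it against the stated formula.

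For the local degree I will use multiplicativity of ramification indices in $Y\to X\to B$. The index of $Y\to B$ at any point over $b$ is $|I|$. Since $Y\to X$ is $H$-Galois, its index at $y'=g^{-1}y$ is the order of the inertia group of $y'$ in $H$, namely $H\cap g^{-1}Ig$. Hence the index of $X\to B$ at $\pi(y')$ is
\[
|I|/|H\cap g^{-1}Ig| = |I|/|gHg^{-1}\cap I| = [I:I\cap gHg^{-1}],
\]
where the middle step conjugates by $g$. The stabilizer of $gH$ in $I$ is exactly $I\cap gHg^{-1}$, so orbit--stabilizer identifies this number with the size of the $I$-orbit of $gH$.

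The point requiring the most care is the identification of the ramification index of the $H$-Galois map $Y\to X$ with the order of the stabilizer. This needs the standard fact that for a finite group acting on a smooth curve over an algebraically closed field, the quotient map at a point has index equal to the order of that point's stabilizer. That holds because the residue fields are trivial, so $e\cdot f=e$ equals the local degree $|{\rm Stab}|$. I will quote this fact rather than reprove it. I also need the inertia group of $y$ inside $H$ to be $H\cap{\rm Stab}_G(y)$; this is immediate since the action is by restriction.

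In the tame case, $I$ is cyclic because the inertia group of a tame point embeds into the multiplicative group of the residue field via its action on the cotangent line. For a generator $\tau$ of $I$, the $I$-orbits on $G/H$ are precisely the $\langle\tau\rangle$-orbits, that is, the cycles of the permutation of $G/H$ induced by $\tau$. So the profile of $X\to B$ over $b$ is the cycle type of $\tau$ on $G/H$.
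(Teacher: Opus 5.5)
Your proof is correct and follows the paper's route: index the fiber by the double cosets $I\backslash G/H$, identify the local degree with $[I:I\cap gHg^{-1}]$ by orbit--stabilizer, and read off the profile from the cycles of a generator of the cyclic tame inertia group. The only difference is in the local-degree step. You get the ramification index from multiplicativity in the tower $Y\to X\to B$ together with the stabilizer formula for the $H$-quotient, while the paper takes it as the degree of the corresponding intermediate extension of complete local fields; your explicit labeling $\pi(g^{-1}y)\leftrightarrow IgH$ also pins down a correspondence the paper leaves implicit.
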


\begin{proof}
The points of $X$ above the chosen point of $B$ are indexed by the double
cosets $I\backslash G/H$, equivalently by the $I$-orbits on $G/H$.  The
stabilizer in $I$ of $gH$ is
\[
 \{i\in I:igH=gH\}=I\cap gHg^{-1}.
\]
The orbit--stabilizer formula gives the displayed index.  The same index is
the ramification index in the corresponding intermediate extension of
complete local fields.  In the tame case $I$ is cyclic.  A generator has
one cycle on each $I$-orbit and the length of that cycle is the orbit
cardinality, so its cycle partition is the ramification profile.
\end{proof}

\begin{lemma}[Normal closure over an \'etale locus]
\label{lem:normal-closure-etale}
Let $U$ be a connected normal curve over a field and let $V\to U$ be a
connected finite \'etale cover.  If $L$ is the normal closure of the finite
extension $k(V)/k(U)$, then the normalization of $U$ in $L$ is finite
\'etale over $U$.
\end{lemma}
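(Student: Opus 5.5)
The plan is to use the standard description of finite étale covers via Galois theory. Finite étale covers of $U$ correspond to finite continuous $\pi_1^{\mathrm{et}}(U,\bar\eta)$-sets, and connected ones to transitive ones. Let $W$ be the normalization of $U$ in $L$. Then $W\to U$ is finite, because $U$ is a normal curve, hence Nagata after possibly noting excellence, or simply because $L/k(U)$ is a finite extension of function fields of a curve over a field. The issue is to show that $W$ is unramified over every closed point $u\in U$.

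The cleanest route avoids fundamental-group formalism and works locally. Let $u\in U$ be a closed point and let $A=\mathcal O_{U,u}$, a DVR with fraction field $K=k(U)$. Since $V\to U$ is finite étale, the integral closure $B$ of $A$ in $M=k(V)$ is étale over $A$. Hence every prime of $B$ has ramification index $1$ and a separable residue extension; equivalently, $M\otimes_K\widehat K_u$ is a product of unramified extensions of the completion $\widehat K_u$. If $M/K$ were inseparable this would fail, so $M/K$ is separable and $L$ is defined as a Galois closure. Then $L$ is the compositum of the $K$-conjugates $\tau(M)$, taken inside a fixed algebraic closure, for $\tau$ ranging over $K$-embeddings. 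Each conjugate $\tau(M)\cong M$ is unramified over $u$ in the same sense, because being unramified at $u$ is a property of the $K$-algebra $M\otimes_K\widehat K_u$, which is isomorphic for $\tau(M)$.

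The key step is that a compositum of finitely many finite separable extensions, each unramified at $u$, is unramified at $u$. I would prove this by tensor products. $L$ is a quotient, indeed a factor, of $\tau_1(M)\otimes_K\cdots\otimes_K\tau_n(M)$. After completing at $u$, this tensor product becomes a product of tensor products of finite unramified extensions of $\widehat K_u$. Each such tensor product is finite étale over the complete DVR $\widehat A_u$ at the level of integral models: a tensor product of étale algebras is étale, and its normalization is itself. Hence $L\otimes_K\widehat K_u$ is a product of unramified extensions. Concretely, every prime of the integral closure of $A$ in $L$ has ramification index $1$ and a separable residue field extension. A finite extension of a DVR with these properties is étale, so $W\to U$ is étale over $u$.

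Since $u$ was arbitrary and $W$ is finite over $U$, the map $W\to U$ is finite étale. The main obstacle is the compositum step, specifically checking that étaleness survives passing to the factor of the tensor product and taking normalizations. I would handle this with the fact that a finite étale algebra over a normal base is normal, so the integral closure of $A$ in each factor coincides with the corresponding factor of the étale algebra $\bigotimes_j B_{\tau_j}$.
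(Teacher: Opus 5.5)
Your argument is correct and is essentially the paper's proof: the paper also writes $L$ as the compositum of the $k(U)$-conjugates of $k(V)$ and identifies the normalization of $U$ in $L$ with a connected component of the iterated fiber product $V\times_U\cdots\times_U V$, which is finite \'etale. You run the same argument locally at each closed point; the completion detour is not needed, since your final paragraph already gives the conclusion, and your remark that a finite \'etale algebra over a normal base is normal supplies the justification the paper leaves implicit for why that component is the normalization.
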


\begin{proof}
Finite \'etale covers of $U$ are closed under fiber products, open-and-closed
subschemes, and connected components.  Choose the finitely many
$k(U)$-embeddings of $k(V)$ into a fixed separable closure of $k(U)$.  The
compositum of their images is $L$.  The normalization of $U$ in this
compositum is a connected component of an iterated fiber product of copies
of $V$ over $U$.  That iterated fiber product is finite \'etale over $U$, and
so is each connected component.  Hence the normalization in $L$ is finite
\'etale over $U$.
\end{proof}

\begin{lemma}[The normalizer of a regular subgroup]\label{lem:normalizer}
Let $p$ be prime and let $P\leq S_p$ be generated by a $p$-cycle.  Then
\[
 C_{S_p}(P)=P,\qquad
 N_{S_p}(P)=P\rtimes\mathbf F_p^\times.
\]
For every divisor $m$ of $p-1$, the quotient
$N_{S_p}(P)/P$ has a unique subgroup of order $m$, and all complements of
$P$ in its inverse image are conjugate by an element of $P$.
\end{lemma}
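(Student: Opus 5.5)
I would identify the $p$ points with $\mathbf F_p$ and, after conjugating, assume $P=\langle t\rangle$ with $t(x)=x+1$. For the centralizer: a permutation $\sigma$ commuting with $t$ satisfies $\sigma(x+1)=\sigma(x)+1$, so $\sigma(x)=x+\sigma(0)$ for all $x$, i.e.\ $\sigma=t^{\sigma(0)}\in P$. Hence $C_{S_p}(P)=P$. For the normalizer, I would use that $\operatorname{Aut}(P)\cong\mathbf F_p^\times$ and that $N_{S_p}(P)/C_{S_p}(P)$ embeds in $\operatorname{Aut}(P)$. This gives $|N_{S_p}(P)|\le p(p-1)$. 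Conversely, the affine maps $x\mapsto ax+b$ with $a\in\mathbf F_p^\times$ normalize $P$, because $(ax+b)\circ t\circ(ax+b)^{-1}=t^{a}$. They form a group $\mathrm{AGL}_1(\mathbf F_p)=P\rtimes\mathbf F_p^\times$ of order $p(p-1)$, so equality holds. An alternative count is that there are $(p-1)!$ $p$-cycles, $p-1$ of them in each cyclic subgroup of order $p$, hence $(p-2)!$ conjugates of $P$. The normalizer then has index $(p-2)!$, so its order is $p!/(p-2)!=p(p-1)$.

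For the second assertion, $N/P\cong\mathbf F_p^\times$ is cyclic of order $p-1$. It therefore has a unique subgroup $C_m$ of order $m$ for each $m\mid p-1$. Its inverse image is $H_m=P\rtimes\mu_m$, where $\mu_m\le\mathbf F_p^\times$ acts by multiplication; this group has order $pm$ with $\gcd(p,m)=1$. A complement of $P$ in $H_m$ is then a subgroup of order $m$ mapping isomorphically onto $C_m$. I would invoke Schur--Zassenhaus: since $P$ is abelian and normal, the coprime-order complements are conjugate in $H_m$.

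To sharpen the conclusion to conjugacy by an element of $P$, write any element of $H_m$ as $uc$ with $u\in P$ and $c\in\mu_m$. Conjugation by $c$ fixes the standard complement $\mu_m$, so $uc\,\mu_m\,(uc)^{-1}=u\mu_m u^{-1}$, and every complement is a $P$-conjugate of $\mu_m$. The same conclusion can be checked directly without Schur--Zassenhaus. Every element of order dividing $m$ that is not in $P$ has the form $x\mapsto ax+b$ with $a\neq1$, and it fixes the unique point $b/(1-a)$. A complement of order $m$ is cyclic, generated by such an element, and so fixes some point $x_0$. Conjugating by the translation $t^{-x_0}\in P$ moves $x_0$ to $0$ and carries the complement into the stabilizer of $0$, which is $\mu_m$. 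The only care needed is the case $m=1$, which is trivial. I expect no real obstacle: the main point is simply bookkeeping the identification of $N/P$ with $\mathbf F_p^\times$ and reducing conjugacy in $H_m$ to conjugacy by $P$.
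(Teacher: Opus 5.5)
Your proof is correct and follows essentially the paper's route: the same translation computation gives $C_{S_p}(P)=P$, and your direct fixed-point argument for complements is exactly the paper's (a generator $x\mapsto ax+b$ with $a\neq 1$ fixes $b/(1-a)$, and conjugating by a translation moves that point to $0$). The one variation is the normalizer. The paper shows directly that any $f$ normalizing $P$ satisfies $f(b)=ab+f(0)$, whereas you bound $|N_{S_p}(P)|\le |C_{S_p}(P)|\cdot|\operatorname{Aut}(P)|=p(p-1)$ and match this with the affine group; both work, and your Schur--Zassenhaus detour is unnecessary given the direct argument.
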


\begin{proof}
Identify the set of letters with the additive group of $\mathbf F_p$.  For
$b\in\mathbf F_p$, write $\tau_b(x)=x+b$; then
$P=\{\tau_b:b\in\mathbf F_p\}$.  If a permutation $f$ centralizes $P$ and
$c=f(0)$, then
\[
 f(b)=f(\tau_b(0))=\tau_b(f(0))=b+c
\]
for every $b$.  Hence $f=\tau_c\in P$, which proves $C_{S_p}(P)=P$.

Suppose next that $f$ normalizes $P$.  Conjugation by $f$ induces an
automorphism of the cyclic group $P$.  There is therefore a unique
$a\in\mathbf F_p^\times$ such that
$f\tau_bf^{-1}=\tau_{ab}$ for all $b$.  If $c=f(0)$, then
\[
 f(b)=f(\tau_b(0))=(f\tau_bf^{-1})(c)=ab+c.
\]
Thus $f$ is affine.  Conversely, every map $x\mapsto ax+c$ with
$a\ne0$ normalizes $P$, so
$N_{S_p}(P)=P\rtimes\mathbf F_p^\times$.

The quotient $N_{S_p}(P)/P$ is the cyclic group $\mathbf F_p^\times$.
Consequently it has a unique subgroup $A_m$ of order $m$ for every
$m\mid p-1$.  Let $H$ be a complement to $P$ in $P\rtimes A_m$.  Projection
to $A_m$ restricts to an isomorphism $H\to A_m$.  If $m=1$, then $H$ is
trivial.  If $m>1$, choose a generator of $H$ and write it as
$x\mapsto ax+c$, where $a$ has order $m$ and hence $a\ne1$.  It fixes the
unique point $x_0=c/(1-a)$.  Conjugating by the translation
$x\mapsto x-x_0$ carries this generator to $x\mapsto ax$ and carries $H$ to
the standard complement $A_m$.  Hence any two complements are conjugate by
an element of $P$.
\end{proof}

\section{Reduction to three branch values}\label{sec:EKS-reduction}

Let $p$ be odd, and let $\Lambda_i\vdash p$ be nonidentity partitions,
none equal to $[p]$.  Write $r_i=\length(\Lambda_i)$.  For three branch
values, compatibility is
\begin{equation}\label{eq:three-RH}
 r_1+r_2+r_3=p+2-2g.
\end{equation}
Each $2\leq r_i\leq p-1$.  Since the left side of
\eqref{eq:three-RH} is odd, it is at least $7$, and therefore
\begin{equation}\label{eq:genus-bound}
 0\leq g\leq \frac{p-5}{2}.
\end{equation}

We use the following five results of Edmonds--Kulkarni--Stong.  They are
stated in the form needed below.  If $A$ is a partition, its defect is
denoted by $v(A)$, and a permutation belongs to $A$ when its cycle partition
is $A$.

\begin{proposition}[EKS fusion and realization results]
\label{prop:EKS-inputs}
Let $p$ be an odd integer and let $A,B$ be partitions of $p$.
\begin{enumerate}
\item If $v(A)+v(B)\leq p-1$, there are $\alpha\in A$ and
$\beta\in B$ such that $v(\alpha\beta)=v(A)+v(B)$.
\item If $v(A)+v(B)$ is even and at least $p+1$, there are
$\alpha\in A$ and $\beta\in B$ such that $\alpha\beta$ is a $p$-cycle.
\item If $v(A)+v(B)$ is odd and at least $p$, there are
$\alpha\in A$ and $\beta\in B$ such that $\alpha\beta$ has cycle
partition $[p-1,1]$.
\item Every compatible degree-$p$ branch datum containing $[p]$ is
realizable.
\item Every compatible degree-$p$ branch datum containing $[p-1,1]$ is
realizable.
\end{enumerate}
\end{proposition}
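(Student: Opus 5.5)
This proposition is imported from Edmonds--Kulkarni--Stong \cite{EKS}, so the proof will mainly point to the relevant statements there and check that their hypotheses cover the form used here. I would match each item with the corresponding EKS result: the fusion propositions for products of permutations in given classes give (1)--(3), and the realization theorems for data containing a long cycle give (4)--(5). For (1)--(3) EKS work in arbitrary degree $d$ and prove:
\begin{enumerate}
\item given partitions $A,B\vdash d$ with $v(A)+v(B)\le d-1$, one can pick representatives whose product is ``additive'', meaning $v(\alpha\beta)=v(A)+v(B)$;
\item and (3) when $v(A)+v(B)\ge d-1$, one can pick representatives whose product is a $d$-cycle, or has type $[d-1,1]$, according to the parity of $v(A)+v(B)-(d-1)$.
\end{enumerate}
In (2)--(3) the parity condition is forced, because $v$ agrees mod $2$ with the sign exponent, so $v(\alpha\beta)\equiv v(A)+v(B)\pmod 2$. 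Here $v([p])=p-1$ is even and $v([p-1,1])=p-2$ is odd, which matches the stated cases. I would specialize to $d=p$ odd and check the boundary cases, for instance $v(A)+v(B)=p-1$ in (1), where the additive product is a $p$-cycle.

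For a self-contained sketch of (1), I would induct on the number of cycles. Realize $\alpha$ as a product of disjoint cycles on an initial segment. Then choose $\beta$ so that each cycle of $\beta$ meets the orbits built so far in exactly one point. Each such cycle of length $e$ then merges $e$ orbits, so the defects add. The bound $v(A)+v(B)\le p-1$ is exactly what guarantees enough letters for this tree-like gluing.

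For (2)--(3), I would start from an additive product of maximal defect and increase the defect of the product by $2$ at a time. Conjugating $\beta$ by a transposition that joins two cycles of the product changes $v(\alpha\beta)$ by $\pm2$, or one cuts and rejoins cycles. This lets one reach defect $p-1$ or $p-2$ while staying inside the classes $A$ and $B$. This is the main obstacle: one must show that the class-preserving moves can always merge cycles rather than split them. EKS handle it by a careful case analysis, and I would cite it instead of reproducing it.

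For (4) and (5), I would use the Hurwitz monodromy criterion (Theorem~\ref{thm:hurwitz-criterion}). EKS show that any compatible datum containing a $d$-cycle is realizable, since a $d$-cycle already makes the group transitive. The remaining partitions are combined using (1)--(3) inductively, fusing pairs until three remain, and a final three-profile step is solved explicitly. The $[p-1,1]$ case (5) reduces to the same argument, with transitivity supplied by any other nontrivial profile that moves the fixed point. In degree $p$, every compatible datum containing $[p-1,1]$ has such a profile, because the other profiles cannot all fix the same point and still satisfy \eqref{eq:RH-intro} with a product-one relation. The proof in the paper will be a citation of these EKS statements with $d=p$.
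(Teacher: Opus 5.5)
Your route is the paper's: the proposition is proved by citing EKS (Lemma~4.2, Corollary~4.4, Lemma~4.5, Propositions~5.2 and~5.3) with $d=p$. However, the one check such a citation needs is stated wrongly in your proposal. You assert that EKS prove (2)--(3) and the realization results in arbitrary degree, and they do not. In Lemma~4.5 the product can be forced to have type $[d/2,d/2]$ instead of $[d-1,1]$. For even $d$ and $A=B=[2^{d/2}]$ the parity condition holds, but two fixed-point-free involutions generate a dihedral group, and the cycles of $\alpha\beta$ come in pairs of equal length. Correspondingly, Proposition~5.3 has exceptions such as $[2^{d/2}],[2^{d/2}],[d-1,1]$. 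The paper's proof consists precisely of observing that these alternatives and exceptions occur only in even degree. That is where oddness of $p$ is used, and your proposal never makes this check.

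Instead you attribute the role of degree $p$ in (5) to an argument that is false. Riemann--Hurwitz together with the product-one relation does not force some factor to move the fixed letter of the $(p-1)$-cycle. If every factor fixes that letter, deleting it leaves a product-one tuple in degree $p-1$ with the same total defect, hence of genus $g+1$, and nothing is contradicted. For $p=5$, the tuple $(1\,2\,3\,4),(1\,4\,3\,2),(1\,2),(1\,2)$ has product one and types $[4,1],[4,1],[2,1^3],[2,1^3]$. These types satisfy \eqref{eq:RH-intro} with $g=0$, yet every factor fixes $5$. So transitivity in (5) is a genuine existence statement: one must \emph{choose} a transitive tuple. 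This is exactly the point at which Proposition~5.3 fails in even degree, so it has to be imported from EKS rather than derived from counting.

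A smaller slip: conjugating $\beta$ by a transposition changes $v(\alpha\beta)$ by $0$ or $\pm2$, not always by $\pm2$.
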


\begin{proof}
Part (1) is Lemma~4.2 on pp.~781--782 of \cite{EKS}, part (2) is
Corollary~4.4 on p.~782, and part (3) is Lemma~4.5 on pp.~782--783.  In
Lemma~4.5 the alternative partition
$[p/2,p/2]$ occurs only in even degree and is therefore absent here.
Parts (4) and (5) are Propositions~5.2 and~5.3 on pp.~784--785; all
exceptional cases in Proposition~5.3 have even degree.
\end{proof}

\begin{lemma}[Insertion of a factorization]
\label{lem:factor-insertion}
Let $c_1,\ldots,c_n\in S_p$ satisfy
\[
 c_1\cdots c_n=1,
 \qquad
 \langle c_1,\ldots,c_n\rangle\ \text{is transitive}.
\]
Fix an index $i$.  Let
$\alpha,\beta\in S_p$ and suppose that $\alpha\beta$ is conjugate to $c_i$.
Then there is $u\in S_p$ for which replacing $c_i$ by the two consecutive
factors $u\alpha u^{-1}$ and $u\beta u^{-1}$ produces a product-one tuple
that still generates a transitive subgroup.
\end{lemma}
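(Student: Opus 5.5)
Since $\alpha\beta$ is conjugate to $c_i$, we can choose $u\in S_p$ with $u\alpha\beta u^{-1}=c_i$. Put $\alpha'=u\alpha u^{-1}$ and $\beta'=u\beta u^{-1}$, so that $\alpha'\beta'=c_i$. Replacing $c_i$ by the two consecutive factors $\alpha',\beta'$ gives
\[
 c_1\cdots c_{i-1}\,\alpha'\beta'\,c_{i+1}\cdots c_n
 = c_1\cdots c_n = 1,
\]
so the product-one relation holds automatically. The only remaining issue is transitivity, and here the freedom in choosing $u$ matters.

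Let $H$ be the subgroup generated by the new tuple. It contains $\alpha'$ and $\beta'$, hence their product $c_i$. It also contains every $c_j$ with $j\ne i$. Therefore $\langle c_1,\ldots,c_n\rangle\leq H$, and $H$ is transitive because the original group is. So every $u$ conjugating $\alpha\beta$ to $c_i$ works.

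I expect no real obstacle. The one point to check is that the replacement keeps $c_i$ in the generated group; this holds because the two new factors multiply to exactly $c_i$, not merely to a conjugate of it. Order matters here: the factors must be inserted as $\alpha'$ then $\beta'$, in the same order as the product $\alpha\beta$. No primality of $p$ is used, and the same argument works in any $S_d$.
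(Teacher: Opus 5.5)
Your proposal is correct and is essentially the paper's proof: choose $u$ with $u\alpha\beta u^{-1}=c_i$, so the product-one relation is preserved, and the new group contains every $c_j$ (with $c_i$ recovered as the product of the two inserted factors), hence contains the transitive group $\langle c_1,\ldots,c_n\rangle$ and is transitive. Your remark that ``the freedom in choosing $u$ matters'' is unnecessary, since, as you go on to conclude, any conjugating $u$ works.
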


\begin{proof}
Choose $u\in S_p$ such that $u\alpha\beta u^{-1}=c_i$.  The product of the
two inserted factors is
\[
 (u\alpha u^{-1})(u\beta u^{-1})
   =u\alpha\beta u^{-1}=c_i.
\]
Consequently the product of the expanded tuple equals the product of the
original tuple and is therefore $1$.  Let $H'$ be the subgroup generated by
the expanded tuple.  It contains every $c_j$ with $j\ne i$, and it contains
$c_i$ as the product of the two inserted factors.  Hence
$\langle c_1,\ldots,c_n\rangle\leq H'$.  A group containing a transitive
subgroup is transitive, so $H'$ is transitive.
\end{proof}

\begin{lemma}[EKS interface]\label{lem:EKS}
Suppose that every compatible triple as above is realizable.  Then every
compatible branch datum of odd prime degree is realizable.
\end{lemma}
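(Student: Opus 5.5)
Throughout, ``triple as above'' means three nonidentity partitions of $p$, none equal to $[p]$, satisfying \eqref{eq:three-RH}. The argument is an induction on the number $k$ of branch values. Let $\Lambda_1,\ldots,\Lambda_k\vdash p$ be compatible. If some $\Lambda_i$ equals $[p]$ or $[p-1,1]$, Proposition~\ref{prop:EKS-inputs}(4),(5) gives realizability, so we may assume no profile is of either type. For $k=1$ compatibility fails, since $v(\Lambda_1)\le p-1<2p-2$. For $k=2$ compatibility forces $v(\Lambda_1)+v(\Lambda_2)\ge 2p-2$ with each defect at most $p-1$, so both profiles are $[p]$, already handled. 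For $k=3$ the datum is a triple as above, realizable by hypothesis.

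For $k\ge4$, order the profiles so that $v(\Lambda_1)+v(\Lambda_2)$ is as small as possible, and \emph{fuse} $\Lambda_1,\Lambda_2$ into a single partition $C$. Then apply induction to the datum $C,\Lambda_3,\ldots,\Lambda_k$ and split the factor back with Lemma~\ref{lem:factor-insertion}. The choice of $C$ is as follows.
\begin{itemize}
\item If $v(\Lambda_1)+v(\Lambda_2)\le p-1$, Proposition~\ref{prop:EKS-inputs}(1) gives $\alpha\in\Lambda_1$ and $\beta\in\Lambda_2$ with $v(\alpha\beta)=v(\Lambda_1)+v(\Lambda_2)$. Let $C$ be the type of $\alpha\beta$. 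Then $v(C)=v(\Lambda_1)+v(\Lambda_2)$, so the new datum, with $k-1$ profiles, is still compatible with the same genus.
\item If the sum is at least $p$, Proposition~\ref{prop:EKS-inputs}(2) or (3), according to parity, gives a product of type $[p]$ or $[p-1,1]$. Take $C$ to be that type. The datum $C,\Lambda_3,\ldots,\Lambda_k$ then contains $[p]$ or $[p-1,1]$ and is realizable by Proposition~\ref{prop:EKS-inputs}(4),(5), provided it is compatible.
\end{itemize}

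In the second case compatibility means checking that the new total defect
\[
 v(C)+\sum_{i\ge3}v(\Lambda_i)
\]
is even and at least $2p-2$. Parity is automatic, since $v(C)\equiv v(\Lambda_1)+v(\Lambda_2)\pmod 2$ in both subcases, so the total keeps the parity of $2p-2+2g$. The new genus $g'$ is determined by
\[
 2g'=\sum_{i}v(\Lambda_i)-v(\Lambda_1)-v(\Lambda_2)+v(C)-(2p-2),
\]
and I need $g'\ge0$. Here I use the minimality of the chosen pair. Since $k\ge4$, the sum $S$ of all defects is at least $2\bigl(v(\Lambda_1)+v(\Lambda_2)\bigr)$, because the remaining profiles contain another disjoint pair whose sum is at least as large. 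Hence $S-v(\Lambda_1)-v(\Lambda_2)\ge S/2\ge p-1$. Since $v(C)\ge p-2$, the new total is at least $2p-3$, and parity then gives at least $2p-2$.

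In every case the shortened datum is realized by permutations $c_1,\ldots,c_{k-1}$ as in Theorem~\ref{thm:hurwitz-criterion}, with $c_1$ of type $C$. Since $\alpha\beta$ is conjugate to $c_1$, Lemma~\ref{lem:factor-insertion} replaces $c_1$ by conjugates of $\alpha,\beta$. The expanded tuple has product one, generates a transitive group, and has types $\Lambda_1,\ldots,\Lambda_k$, so the datum is realized. The main obstacle is the genus bookkeeping in the second case, namely making sure the fused datum stays compatible. If the minimal-pair estimate fails in some edge case, I would instead fuse a pair with defect sum at most $p-1$ whenever one exists, and handle the residual configurations directly.
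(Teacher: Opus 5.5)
Your proof is correct and takes essentially the same route as the paper. It uses strong induction on $k$. When some pair has defect sum at most $p-1$, you fuse it via Proposition~\ref{prop:EKS-inputs}(1); your minimal-pair choice is exactly the paper's case split. Otherwise you fuse to $[p]$ or $[p-1,1]$ by parity and invoke Proposition~\ref{prop:EKS-inputs}(4),(5), and Lemma~\ref{lem:factor-insertion} then restores the original profiles. Your bound $R\ge S/2\ge p-1$ plus parity does the same job as the paper's bound $R\ge p$ from the pairwise hypothesis. Dispatching $[p-1,1]$ at the outset is only a cosmetic difference, and the fallback in your last sentence is never needed.
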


\begin{proof}
We reconstruct the reduction in \cite[Sections 4--5]{EKS}, including its
connectedness argument.  We use strong induction on the number $k$ of
genuine branch partitions.  Compatibility is impossible for $k\leq1$, since
every partition has branching at most $p-1<2p-2$.  If $k=2$, compatibility
forces $g=0$ and both entries to be $[p]$; inverse $p$-cycles realize them.
If $k=3$, Proposition \ref{prop:EKS-inputs}(4) applies when one entry is
$[p]$, and otherwise the residual three-point hypothesis applies.  More
generally, Proposition \ref{prop:EKS-inputs}(4) realizes any compatible datum
containing $[p]$.  We may therefore assume
$k\geq4$ and that no entry is $[p]$.

Suppose first that two entries $A,B$ satisfy
\[
 s:=v(A)+v(B)\leq p-1.
\]
Proposition \ref{prop:EKS-inputs}(1) gives $\alpha\in A$, $\beta\in B$ such that
$v(\alpha\beta)=s$.  Replace $A,B$ by the type $C$ of $\alpha\beta$.
Because $A$ and $B$ are nonidentity partitions, $v(A)\geq1$ and
$v(B)\geq1$.  Hence $v(C)=s\geq2$, so $C$ is also a nonidentity partition.
The new datum has $k-1$ entries and exactly the same total branching, hence
the same genus.  By induction it has a transitive product-one tuple.  Lemma
\ref{lem:factor-insertion} replaces its $C$-element by conjugates of
$\alpha,\beta$ without changing the product-one relation or transitivity.
Thus the expanded tuple realizes the original datum connectedly.

It remains to consider $k\geq4$ when every pair satisfies
$v(A_i)+v(A_j)\geq p$.  Choose $A,B$, put
\[
 s=v(A)+v(B),\qquad
 R=\sum_{C\ne A,B}v(C).
\]
At least two entries remain, so the pairwise hypothesis gives $R\geq p$.
If $s$ is even, then it has the parity of $p+1$ and is at least $p+1$.
Proposition \ref{prop:EKS-inputs}(2) chooses $\alpha,\beta$ with product a
$p$-cycle.  Replace
$A,B$ by $[p]$.  Here $R$ is even; since $p$ is odd, $R\geq p+1$, and the
reduced total branching is
\[
 R+(p-1)\geq2p.
\]
Define the temporary source genus by
\[
 g'=\frac{R+(p-1)-(2p-2)}2=\frac{R-p+1}{2}\geq1.
\]
The numerator is even because $R$ is even and $p$ is odd.  Thus
$g'\in\mathbf Z_{\geq1}$ and the reduced total branching is
$2p-2+2g'$.  The reduced datum is compatible and contains $[p]$, so
Proposition \ref{prop:EKS-inputs}(4) realizes it.

If $s$ is odd, then $s\geq p$ and $s-p$ is even.  Put
$q=(s-p)/2\in\mathbf Z_{\geq0}$, so that $s=p+2q$.  Proposition
\ref{prop:EKS-inputs}(3) chooses $\alpha,\beta$ whose product has type
$[p-1,1]$.  Now $R$ is odd, hence $R\geq p$,
and the reduced total branching is
\[
 R+(p-2)\geq2p-2.
\]
Define the temporary genus by
\[
 g'=\frac{R+(p-2)-(2p-2)}2=\frac{R-p}{2}\geq0.
\]
Here $R-p$ is even because both $R$ and $p$ are odd.  Thus
$g'\in\mathbf Z_{\geq0}$ and the reduced total branching is
$2p-2+2g'$.  The reduced datum is compatible and contains $[p-1,1]$, so
Proposition \ref{prop:EKS-inputs}(5) realizes it.
In each high-pair case, Lemma \ref{lem:factor-insertion} expands the selected
product and restores the original profiles, product-one relation, and
transitivity.  Although a high-pair
replacement changes the intermediate total branching and genus, the final
expanded tuple has the original total and therefore the prescribed genus.

Identity partitions $1^p$ were excluded from branch data at the outset.  If
an externally marked list contains one, deleting it changes neither the
product, the generated subgroup, nor Riemann--Hurwitz, and reinserting it adds
only an unramified marked target point.  This proves the lemma.
\end{proof}

The case $p=2$ will be treated at the end.  We now prove the residual
three-point assertion uniformly for odd $p$.

\section{Deformation data, tail covers, and lifting}

Throughout the characteristic-$p$ construction in the next five sections,
$k$ is an algebraically closed field of characteristic $p>0$.  In the
applications of Henrio's theorem we take $k=\overline{\mathbf F}_p$.

\begin{definition}[Deformation datum]\label{def:deformation-datum}
Let $X$ be a smooth projective connected curve over $k$.  A deformation
datum of type $(H,\chi)$ over $X$ is a pair $(Z,\omega)$ consisting of a
connected tame $H$-Galois cover $Z\to X$, with $p\nmid |H|$, and a nonzero
meromorphic differential $\omega$ on $Z$ such that
\[
 h^*\omega=\chi(h)\omega\qquad(h\in H),
\]
where $\chi:H\to\mathbf F_p^\times$ is a character.  The datum is
logarithmic if $\omega=du/u$ for a rational function $u\in k(Z)^\times$.
Only logarithmic deformation data will be used below.

For $\tau\in X$, choose $\xi\in Z$ above $\tau$ and set
\[
 m_\tau=|H_\xi|,\qquad
 h_\tau=\operatorname{ord}_\xi(\omega)+1,\qquad
 \sigma_\tau=\frac{h_\tau}{m_\tau}.
\]
The point $\tau$ is critical if $(m_\tau,h_\tau)\ne(1,1)$.
\end{definition}

\begin{lemma}[Independence of the point above $\tau$]
\label{lem:deformation-invariants}
The integers $m_\tau,h_\tau$ and the rational number $\sigma_\tau$ in
Definition \ref{def:deformation-datum} do not depend on the chosen point
$\xi$ above $\tau$.
\end{lemma}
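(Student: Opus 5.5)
The plan is to use that $H$ acts transitively on the fiber of $Z\to X$ above $\tau$: the cover is connected and Galois. So any two points $\xi,\xi'$ above $\tau$ satisfy $\xi'=h(\xi)$ for some $h\in H$. It then suffices to show that $m_\tau$ and $h_\tau$ computed at $\xi$ and at $h(\xi)$ agree; $\sigma_\tau$ is their quotient and needs nothing further.

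For $m_\tau$, I will use $H_{h(\xi)}=hH_\xi h^{-1}$. Conjugate subgroups have the same order, so $|H_{h(\xi)}|=|H_\xi|$.

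For $h_\tau$, the automorphism $h$ of $Z$ carries a uniformizer $t$ at $h(\xi)$ to a uniformizer $h^*t=t\circ h$ at $\xi$. If $\omega=f\,dt$ near $h(\xi)$, then $h^*\omega=(f\circ h)\,d(t\circ h)$ near $\xi$. Hence
\[
\operatorname{ord}_\xi(h^*\omega)=\operatorname{ord}_{h(\xi)}(\omega).
\]
The equivariance $h^*\omega=\chi(h)\omega$ with $\chi(h)\in\mathbf F_p^\times\subset k^\times$ a nonzero constant gives
\[
\operatorname{ord}_\xi(h^*\omega)=\operatorname{ord}_\xi(\omega).
\]
Combining the two equalities gives $\operatorname{ord}_{h(\xi)}(\omega)=\operatorname{ord}_\xi(\omega)$, so $h_\tau$ is well defined. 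Here $\omega\neq0$ is needed so that the orders are finite integers.

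The only point requiring care is the transport of orders of differentials under an automorphism. This follows from the fact that an automorphism induces an isomorphism of completed local rings $\widehat{\mathcal O}_{Z,h(\xi)}\to\widehat{\mathcal O}_{Z,\xi}$ that preserves valuations. I do not expect a genuine obstacle.
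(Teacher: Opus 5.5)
Your proposal is correct and follows essentially the same argument as the paper. Both use transitivity of $H$ on the fiber, conjugacy of stabilizers, invariance of the order of a differential under pullback by an automorphism, and the fact that the nonzero scalar $\chi(a)$ does not change orders. The only blemish is your reuse of the letter $h$ for both a group element and the integer $h_\tau$.
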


\begin{proof}
Let $\xi'$ be another point above $\tau$.  Since $Z\to X$ is a connected
$H$-Galois cover, $H$ acts transitively on the geometric fiber above
$\tau$.  Choose $a\in H$ with $a(\xi)=\xi'$.  Then
\[
 H_{\xi'}=aH_\xi a^{-1},
\]
so the two stabilizers have the same order.  Moreover,
$a^*\omega=\chi(a)\omega$, and multiplication of a differential by the
nonzero scalar $\chi(a)$ does not change its order.  Pullback by the
isomorphism $a$ gives
\[
 \operatorname{ord}_{\xi'}(\omega)
 =\operatorname{ord}_{\xi}(a^*\omega)
 =\operatorname{ord}_{\xi}(\omega).
\]
Thus $m_\tau$ and $h_\tau$ are independent of $\xi$, and so is
$\sigma_\tau=h_\tau/m_\tau$.
\end{proof}

\begin{lemma}[Signature identity]\label{lem:signature-identity}
For a deformation datum over a curve $X$ of genus $g_X$,
\[
 \sum_{\tau\ \mathrm{critical}}(\sigma_\tau-1)=2g_X-2.
\]
\end{lemma}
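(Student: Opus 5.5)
Let $\pi:Z\to X$ be the tame $H$-Galois cover and put $n=|H|$.  We compute $\deg(\omega)=2g_Z-2$ in two ways.  The first way is Riemann--Hurwitz for the tame cover.  The second way is to sum orders of $\omega$ over the fibres of $\pi$.

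For the first computation, tameness makes the different exponent at $\xi$ equal to $e_\xi-1=m_\tau-1$.  The fibre over $\tau$ has $n/m_\tau$ points.  Hence
\[
 2g_Z-2=n(2g_X-2)+\sum_{\tau\in X}\frac{n}{m_\tau}(m_\tau-1).
\]

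For the second computation, Lemma~\ref{lem:deformation-invariants} says that $\operatorname{ord}_\xi(\omega)=h_\tau-1$ for every $\xi$ above $\tau$.  Summing over all points of $Z$ gives
\[
 2g_Z-2=\deg(\omega)=\sum_{\tau\in X}\frac{n}{m_\tau}(h_\tau-1).
\]
Both sums are finite.  Only finitely many $\tau$ are ramified, and $\omega$, being a nonzero meromorphic differential, has finitely many zeros and poles.  The summands vanish at noncritical $\tau$, where $(m_\tau,h_\tau)=(1,1)$.

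Equating the two expressions and dividing by $n$ gives
\[
 \sum_{\tau}\frac{h_\tau-1-(m_\tau-1)}{m_\tau}=2g_X-2.
\]
The left side equals $\sum_\tau(\sigma_\tau-1)$, since $\frac{h_\tau-m_\tau}{m_\tau}=\sigma_\tau-1$.  Each noncritical term is $\frac{1-1}{1}=0$, so the sum may be restricted to critical points.  This is the asserted identity.

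The only point needing care is the tame Riemann--Hurwitz formula.  It applies because $p\nmid |H|$, so every inertia group $H_\xi$ has order prime to $p$ and the ramification is tame.  Note also that the equivariance $h^*\omega=\chi(h)\omega$ is not needed beyond Lemma~\ref{lem:deformation-invariants}, which makes the orders of $\omega$ constant along each fibre.
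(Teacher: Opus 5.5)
Your proposal is correct and is essentially the paper's proof. Both arguments compute $2g_Z-2$ twice, once as $\deg\operatorname{div}(\omega)$ summed over the fibres, using the fibrewise constancy of $\operatorname{ord}_\xi(\omega)$ from Lemma~\ref{lem:deformation-invariants}, and once by tame Riemann--Hurwitz, and then subtract and divide by $n=|H|$.
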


\begin{proof}
Put $n=|H|$.  The fiber of $Z\to X$ above $\tau$ has $n/m_\tau$ points.
At each of these points the order of $\omega$ is $h_\tau-1$.  The degree of
the divisor of a meromorphic differential therefore gives
\[
 2g_Z-2=\sum_\tau\frac{n}{m_\tau}(h_\tau-1).
\]
Because $Z\to X$ is tame, Riemann--Hurwitz gives
\[
 2g_Z-2=n(2g_X-2)
 +\sum_\tau\frac{n}{m_\tau}(m_\tau-1).
\]
Subtract the second ramification sum from the first equality and divide by
$n$.  The summand at $\tau$ becomes
\[
 \frac{h_\tau-1}{m_\tau}-\frac{m_\tau-1}{m_\tau}
 =\frac{h_\tau}{m_\tau}-1=\sigma_\tau-1.
\]
Noncritical points have $(m_\tau,h_\tau)=(1,1)$ and contribute zero, which
proves the formula.
\end{proof}

\begin{definition}[Special deformation datum]\label{def:special-datum}
A logarithmic deformation datum $(Z,\omega)$ over $\mathbf P^1_k$ is
special if every critical invariant satisfies $\sigma_\tau<2$ and
$\sigma_\tau\ne1$, and exactly three critical points satisfy
$\sigma_\tau<1$.  If $B$ indexes the critical points, put
\[
 \begin{aligned}
 B_{\rm wild}&=\{j\in B:\sigma_j=0\},\\
 B_{\rm prim}&=\{j\in B:0<\sigma_j<1\},\\
 B_{\rm new}&=\{j\in B:1<\sigma_j<2\},\\
 B_0&=B_{\rm wild}\cup B_{\rm prim}.
 \end{aligned}
\]
Thus $|B_0|=3$.  It is normalized if the three points indexed by $B_0$ are
$0,1,\infty$.  The three kinds of indices are called wild, primitive, and
new, respectively.
\end{definition}

\begin{definition}[Pointed tail cover]\label{def:tail}
Let $G$ be finite.  A $G$-cover means a finite generically separable map
$\bar f:\bar Y\to\bar X$ with a $G$-action on $\bar Y$ and quotient
$\bar Y/G=\bar X$; the source is allowed to be disconnected.  A $G$-tail
cover is a $G$-cover with $\bar X\simeq\mathbf P^1_k$ which is wildly
ramified above a distinguished point
$\bar\infty$ with inertia order $pm$, where $p\nmid m$, and whose restriction to
$\bar X-\{\bar\infty\}$ is tame and branched at at most one point.  A
pointing consists of a point $\eta$ above $\bar\infty$ and a complement
$H\leq I_\eta$ such that
$I_\eta=I_{\eta,1}\rtimes H$, where $I_{\eta,1}\simeq C_p$.  Write
$m=|H|$, let $h$ be the unique lower
jump of $I_{\eta,1}$, and put $\sigma=h/m$.  The tail is special if
$\sigma<2$, $\sigma\ne1$, and its affine restriction is \'etale whenever
$\sigma>1$.  It is primitive if $\sigma<1$ and new if $\sigma>1$.
The ordered pair $(m,h)$ is called the integer type of the pointed tail and
is denoted by $\operatorname{type}(f)$.
\end{definition}

Definitions \ref{def:deformation-datum}--\ref{def:tail} are
\cite[Definitions~1.5, 2.7, 2.9, and 2.10]{WewersThree}.

\begin{definition}[Special $G$-deformation datum]
\label{def:special-G-datum}
Fix a finite group $G$, a subgroup
\[
 G_0=P\rtimes H_0\leq G,\qquad P=\langle\alpha\rangle\simeq C_p,
\]
and the actual conjugation character
\[
 \chi_0:H_0\longrightarrow\mathbf F_p^\times,
 \qquad h\alpha h^{-1}=\alpha^{\chi_0(h)}.
\]
A special $G$-deformation datum consists of the following objects.
\begin{enumerate}
\item A normalized special deformation datum $(Z_0,\omega_0)$ of type
$(H_0,\chi_0)$, with critical points $(\tau_j)_{j\in B}$ and integer types
$(m_j,h_j)$.
\item For every $j\in B-B_{\rm wild}$, a point $\xi_j\in Z_0$ above
$\tau_j$.
\item For every $j\in B-B_{\rm wild}$, a subgroup $G_j\leq G$ and a
connected pointed $G_j$-tail
$(f_j:Y_j\to\mathbf P^1,\eta_j,H_j)$.
\end{enumerate}
These objects are required to satisfy
\[
 G=\langle G_0,G_j:j\in B-B_{\rm wild}\rangle,
\]
and, for every $j\in B-B_{\rm wild}$,
\[
 \operatorname{type}(f_j)=(m_j,h_j),\qquad
 I_{\eta_j}=P\rtimes H_j,\qquad
 H_j=\operatorname{Stab}_{H_0}(\xi_j)
     =H_0\cap I_{\eta_j}.
\]
\end{definition}

This is \cite[Definition~2.13]{WewersThree}.

\begin{lemma}[Tame inertia under lifting]\label{lem:tame-profile}
Let $R$ be a complete discrete valuation ring with algebraically closed
residue field of characteristic $p$, and let $\mathcal D$ be a formal disk
over $R$ with a section $x_R$.  Let
$\mathcal E\to\mathcal D$ be the unique tame lift of a finite
$G$-equivariant cover of the special fiber which is \'etale away from the
specialization of $x_R$.  If $T\leq G$ is the inertia group at that point,
then the generic inertia group along $x_R$ is conjugate to $T$.  Its
generators differ from generators of $T$ by powers prime to $|T|$.
Consequently, for every finite $G$-set $\Omega$, the orbit partition of a
local inertia generator on $\Omega$ is unchanged by the lift.  If the
special-fiber cover is \'etale, its unique tame lift is \'etale.
\end{lemma}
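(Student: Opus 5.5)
The plan is to reduce everything to the tame specialization theory of Grothendieck, in the form of the structure of the tame fundamental group of a punctured formal disk. First I treat the ``unique tame lift'' structurally. Over the special fiber, the cover is a finite $G$-equivariant cover of $\operatorname{Spec} k[[t]]$, étale away from $t=0$, where $t$ is a parameter at the specialization of $x_R$. Its connected components are permuted transitively by $G$. Fix one component and its stabilizer $G'$. That component is a connected Galois cover with group $G'$ of the form $k[[t]]\to k[[s]]$, $s^{e}=t$, with $p\nmid e$. In particular the inertia group $T$ (the stabilizer of the closed point in the fixed component) is cyclic of order $e$ and equals the relevant decomposition group. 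It acts on $s$ through a faithful character $T\to\mu_e(k)$.

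The lift over $\mathcal D=\operatorname{Spf}R[[t]]$ with section $t=0$ (after a coordinate change so that $x_R$ is $t=0$) is the induced cover $\operatorname{Ind}_T^G R[[s]]$ with $s^e=t$. The action of $T$ is given by the Teichmüller lift of the character, using $\mu_e(R)\to\mu_e(k)$ bijective since $p\nmid e$ and $R$ is complete with algebraically closed residue field. By uniqueness of the tame lift (Grothendieck, SGA1 Exp.~XIII, or Wewers's statement), the lift $\mathcal E$ is $G$-isomorphic to this model. In this model, along $x_R$ (the locus $s=0$ in the generic fiber), the stabilizer of the generic point of $\{s=0\}$ in the fixed component is exactly $T$. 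It acts by the same roots of unity, and any other choice of point above $x_R$ conjugates it within $G$. So the generic inertia group along $x_R$ is conjugate to $T$.

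For generators, compare the canonical generator at the special and generic points. A distinguished generator $\gamma$ is the element acting on the uniformizer by a fixed primitive $e$-th root of unity $\zeta$. Choices of $\zeta$ in $k$ and in the generic residue field, or of local orientation, differ by exponents prime to $e=|T|$, so generic generators are powers $\gamma^a$, $\gcd(a,|T|)=1$, of special ones, up to the conjugation above. For a finite $G$-set $\Omega$, the orbit partition of $\gamma^a$ on $\Omega$ equals that of $\gamma$: $\langle\gamma^a\rangle=\langle\gamma\rangle$, so the orbits agree, and the partition is the multiset of orbit sizes. Conjugation by $g\in G$ only relabels $\Omega$ via $g$, so it preserves the partition too. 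Finally, if the special cover is étale then $T=1$, $e=1$, and the model is $\operatorname{Ind}_1^G R[[t]]$, a disjoint union of copies of $\mathcal D$, which is étale.

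The main obstacle is justifying that the unique tame lift really is the explicit Kummer model, i.e.\ invoking the equivalence between tame covers of the formal disk over $R$ étale off the section and tame covers of its special fiber, compatibly with $G$. I would cite SGA1 Exp.~XIII (or Wewers, \emph{Three point covers}, where tame lifts of local covers are constructed) and check that the Kummer model is a lift, so uniqueness identifies it with $\mathcal E$.
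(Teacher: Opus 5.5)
Your proposal is correct and follows essentially the paper's argument: the Kummer normal form $t=s^e$, the reduction bijection $\mu_e(R)\to\mu_e(k)$ showing that the same element $s\mapsto\zeta s$ generates both the special and the generic inertia, invariance of the orbit partition under powers prime to $e$ and under conjugation, and $e=1$ for the \'etale case. The only difference is cosmetic: you build the induced Kummer model with the Teichm\"uller-lifted character and identify it with $\mathcal E$ by uniqueness of tame lifts, whereas the paper works on the lift itself and normalizes its local equation $t=us^e$ by extracting an $e$th root of $u$ with Hensel's lemma.
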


\begin{proof}
After strict henselization at the section, a connected tame local cover of
ramification index $e$ has the form
\[
 R^{\mathrm{sh}}[[t]]\longrightarrow R^{\mathrm{sh}}[[s]],
 \qquad t=u s^e,
\]
where $u$ is a unit and $p\nmid e$.  The tame fundamental group is
procyclic.  Because $e$ is invertible and the ring is strictly henselian,
Hensel's lemma gives an $e$th root of $u$; after replacing $s$ by that root
times $s$, the equation is $t=s^e$.  Reduction induces a bijection between
the $e$th roots of unity in $R^{\mathrm{sh}}$ and those in its residue
field.  Thus the same element
\[
 s\longmapsto\zeta_e s
\]
generates the special and generic inertia groups.  Changing either chosen
generator replaces it by its $a$th power for some
$a\in(\mathbf Z/e\mathbf Z)^\times$.  On a finite $G$-set, a generator and
such a power generate the same cyclic subgroup; hence they have the same
orbits, and on each orbit both act as a cycle whose length is the orbit
cardinality.  Their cycle partitions are therefore equal.  When $e=1$, the
local equation is $t=s$ and the extension is \'etale.  Applying this argument
to every connected component proves the statement.
\end{proof}

\begin{theorem}[Lifting interface]\label{prop:interface}
Let $p$ be odd and let $k$ be algebraically closed of characteristic $p$.
Let $G\leq S_p$ satisfy $p\Vert |G|$.  Fix a regular subgroup
$P=C_p\leq G$ and an actual complement $H_0\leq N_{S_p}(P)$ such that
\[
 G_0=P\rtimes H_0\leq G.
\]
Let $\chi_0:H_0\hookrightarrow\operatorname{Aut}(P)
=\mathbf F_p^\times$ be the character induced by conjugation.
Let a special $G$-deformation datum in the sense of Definition
\ref{def:special-G-datum} be given.  Assume that its central deformation
datum is logarithmic, has no wild critical point, and hence has exactly
three primitive critical points.  Then it defines a connected special
stable $G$-map, and that map is
the stable reduction of a characteristic-zero three-point $G$-cover.
Moreover, on every primitive tail the marked tame restriction away from the
wild attachment point is preserved by the lift; in particular, its finite
tame inertia action on every $G$-set is preserved.  A new tail creates no
additional horizontal branch point.
\end{theorem}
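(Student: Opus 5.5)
This is an interface theorem, so the plan is to cite Wewers's results and check that the hypotheses match, not to build anything new. Wewers \cite{WewersThree} gives a dictionary between special $G$-deformation data, in the sense of Definition~\ref{def:special-G-datum}, and special stable $G$-maps. The lifting theorem of \cite{WewersMeta,WewersThree} then says that every special stable $G$-map with the right ramification pattern is the stable reduction of a characteristic-zero $G$-cover of $\mathbf P^1$ branched at three points. I will do the following, in order:
\begin{enumerate}
\item check Wewers's standing hypotheses in our setting;
\item assemble the stable $G$-map from the datum and check that it is connected;
\item quote the lifting theorem;
\item use Lemma~\ref{lem:tame-profile} to track inertia on the primitive tails.
\end{enumerate}

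\emph{Hypotheses.} Since $G\leq S_p$, the order $|G|$ divides $p!$, so $p\Vert|G|$ and a Sylow $p$-subgroup is the regular $P$. By Lemma~\ref{lem:normalizer}, $N_G(P)\leq P\rtimes\mathbf F_p^\times$. The complement $H_0$ is cyclic of order prime to $p$, and $\chi_0$ is injective because $C_{S_p}(P)=P$. This is exactly the situation of \cite[Section~2]{WewersThree}: the Sylow subgroup has order $p$, and the character is induced by conjugation.

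\emph{Assembly and connectedness.} On the original component, take the quotient of $Z_0$ by the Artin--Schreier-type extension determined by $\omega_0=du/u$. With no wild points, this is an inseparable $P$-cover. Form the induced cover
\[
\operatorname{Ind}_{G_0}^{G}\ \text{of that cover}.
\]
For each primitive or new index $j$, glue $\operatorname{Ind}_{G_j}^G Y_j$ at the $G$-orbit of the point above $\tau_j$. The compatibility conditions
\[
I_{\eta_j}=P\rtimes H_j,\qquad H_j=\operatorname{Stab}_{H_0}(\xi_j)
\]
together with $\operatorname{type}(f_j)=(m_j,h_j)$ make the stabilizers and the conductors match at the nodes. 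This matching is what Wewers's definition of a special $G$-map requires, including his \'etaleness condition for new tails when $\sigma>1$. The glued object is connected because
\[
G=\langle G_0,G_j\rangle.
\]
Indeed, the stabilizers of the components through a chosen connected chain of nodes generate the stabilizer of the connected component containing it, and that stabilizer is therefore all of $G$.

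\emph{Lifting.} The signature identity (Lemma~\ref{lem:signature-identity}) on $\mathbf P^1$ is the numerical condition that enters Wewers's deformation argument. Since there are exactly three primitive critical points, the lift is branched over exactly three generic points: one for each primitive tail, specializing to that tail's tame branch point. New tails carry no horizontal branch point, because their affine part is \'etale and, by the last sentence of Lemma~\ref{lem:tame-profile}, it lifts \'etale.

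\emph{Inertia.} For the final claim, the formal neighbourhood of each marked tame point on a primitive tail is the situation of Lemma~\ref{lem:tame-profile}. Hence the generic inertia generators are conjugate to powers prime to the order of the special ones, so their orbit partitions on any $G$-set agree.

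The main obstacle is aligning conventions precisely. I need to check three things:
\begin{itemize}
\item Wewers's lifting theorem is stated for special $G$-maps with $p\Vert|G|$ and arbitrary tails, not only primitive ones, and it produces a lift over some finite extension of $W(k)$ that is a three-point cover;
\item his normalization of $h$, as lower jump versus conductor, matches Definition~\ref{def:tail};
\item the marked tame branch point of a primitive tail lifts as a horizontal section, which is where uniqueness of tame lifts over the formal disk is invoked.
\end{itemize}
I would first match \cite[Definitions~2.9--2.13]{WewersThree} line by line with Definitions~\ref{def:tail} and~\ref{def:special-G-datum}, and then quote \cite[Theorem~3.? / Proposition~3.?]{WewersThree} together with \cite{WewersMeta} for the existence of the lift.
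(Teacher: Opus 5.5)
Your proposal follows the paper's proof essentially step for step. The paper likewise glues $\operatorname{Ind}_{G_0}^{G}(Y_0)$ to the $\operatorname{Ind}_{G_j}^{G}(Y_j)$ along the matched stabilizers $P\rtimes H_j$ (Wewers's Definitions~2.14--2.15), gets connectedness from $G=\langle G_0,G_j\rangle$, and lifts by \cite[Corollary~4.6]{WewersThree}; the unique tame lift of the affine tail cover in the proof of Theorem~4.5, Section~4.2.3, is what makes Lemma~\ref{lem:tame-profile} applicable, and there are exactly three branch sections because $B_{\rm wild}=\varnothing$ and $|B_{\rm prim}|=3$. One description needs fixing: the central piece $Y_0\to Z_0$ is simply the purely inseparable degree-$p$ cover in Wewers's definition, not a quotient of $Z_0$ and not an Artin--Schreier extension, and its inseparability does not come from the absence of wild points.
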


\begin{proof}
Let $X_0=\mathbf P^1_k$.  Attach a copy $X_j=\mathbf P^1_k$ to $X_0$ by
identifying the distinguished point $\infty_j\in X_j$ with
$\tau_j\in X_0$.  Denote the resulting nodal curve by $\bar X$.  The
logarithmic datum determines the central inseparable cover $Y_0\to X_0$
used in \cite[Definitions~2.14--2.15]{WewersThree}.  Form the induced
$G$-curves
\[
 \operatorname{Ind}_{G_0}^{G}(Y_0)
 \quad\text{and}\quad
 \operatorname{Ind}_{G_j}^{G}(Y_j)
\]
for all $j$.  At the node above $\tau_j=\infty_j$, the equalities
$I_{\eta_j}=P\rtimes H_j$ and $H_j=H_0\cap I_{\eta_j}$ identify the two
induced $G$-orbits with the same stabilizer.  They therefore give a
$G$-equivariant gluing.  The connected components of the resulting source
are permuted by the subgroup generated by $G_0$ and all $G_j$.  This subgroup
is $G$, so the source is connected.  Thus Definitions~2.14--2.15 of
\cite{WewersThree} give a connected special stable $G$-map
$\bar Y\to\bar X$.

Corollary 4.6 of \cite{WewersThree} lifts every such special map to a
three-point $G$-cover.  More precisely, in the proof of Theorem 4.5 in
\cite[Section 4.2.3]{WewersThree}, Wewers deletes the wild attachment point
from a tail, passes to the resulting induced, possibly disconnected, affine
tame cover, and uses its unique tame lift over the corresponding formal disk;
the lift is unramified away from the horizontal section.  Lemma
\ref{lem:tame-profile} shows that, at a primitive tail, the finite tame
inertia subgroup and its orbit partition on every $G$-set are preserved.  At
a new tail the affine special-fiber cover is \'etale, so its unique tame lift
is \'etale and gives no horizontal ramification.
Wewers marks the base by $B_0=B_{\rm prim}\cup B_{\rm wild}$.  In the
present hypotheses $B_{\rm wild}=\varnothing$ and $|B_{\rm prim}|=3$, so the
generic cover has exactly those three branch sections.
\end{proof}

\section{Primitive tails with prescribed profile}

\begin{theorem}[Henrio's moment criterion]\label{thm:henrio-moments}
Let $r$ be an integer with $2\leq r\leq p$.  Let
$e_1,\ldots,e_r\in\mathbf F_p^\times$ satisfy
\[
 \sum_j e_j=0.
\]
Suppose that no nonempty proper subcollection of the
$e_j$ has sum zero.  Then there are pairwise distinct
$a_1,\ldots,a_r\in\overline{\mathbf F}_p$ such that
\begin{equation}\label{eq:henrio-moments}
 \sum_{j=1}^{r}e_j a_j^q=0\qquad(1\leq q\leq r-2).
\end{equation}
\end{theorem}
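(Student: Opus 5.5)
The plan is to recast \eqref{eq:henrio-moments} as a statement about logarithmic differentials on $\mathbf P^1_k$, and then obtain the distinct points from a properness argument on configuration space. Lift each $e_j$ to an integer in $[1,p-1]$. For pairwise distinct $a_j$, consider
\[
 \omega=\sum_{j=1}^{r}\frac{e_j\,dx}{x-a_j}=\frac{N(x)\,dx}{\prod_j(x-a_j)} .
\]
This is logarithmic, namely $du/u$ for $u=\prod_j(x-a_j)^{e_j}$. Expanding at $\infty$ gives
\[
 \sum_j e_j/(x-a_j)=\sum_{q\ge0}\Bigl(\sum_j e_ja_j^q\Bigr)x^{-q-1}.
\]
The hypothesis $\sum_j e_j=0$ kills the leading coefficient of $N$, so $\deg N\le r-2$. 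The coefficients of $N$ are triangular combinations of the moments with $1\le q\le r-2$. Hence \eqref{eq:henrio-moments} holds if and only if $N$ is a constant $c$, and then $c=\sum_j e_ja_j^{r-1}\ne0$ because $\omega\ne0$.

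So the theorem becomes the following claim: there is a squarefree polynomial $F$ of degree $r$ whose residues $1/F'(a_j)$ at its roots are proportional to $(e_1,\dots,e_r)$. The same computation shows the moment equations are invariant under $a_j\mapsto \lambda a_j+\mu$, because translation only mixes in lower moments and $\sum_j e_j=0$.

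This suggests studying the residue map
\[
 \Phi:\mathrm{Conf}_r(\mathbf A^1)/\mathrm{Aff}\longrightarrow \PP\bigl(\{x\in k^r:\textstyle\sum x_j=0\}\bigr),\qquad
 (a_j)\longmapsto\bigl[1/F'(a_j)\bigr]_j ,
\]
where $F=\prod_j(x-a_j)$. Both sides have dimension $r-2$; the residues sum to zero since $\sum_j 1/F'(a_j)$ is the coefficient of $x^{-1}$ in $1/F$, which vanishes for $r\ge2$. I would prove that $\Phi$ is finite and surjective onto the open complement $W$ of the hyperplanes $\sum_{j\in S}x_j=0$, $\varnothing\ne S\subsetneq\{1,\dots,r\}$. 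The hypothesis on the $e_j$ says precisely that $[e]\in W$, which then yields the required distinct $a_j$.

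The steps, in order, are as follows.
\begin{enumerate}
\item Check quasi-finiteness of $\Phi$. A fiber is cut out by the equations $N=c$, and the differential $c\,dx/F$ determines $F$ up to the affine action. It should therefore be finite, but this has to be verified.
\item Check the valuative criterion for $\Phi^{-1}(W)\to W$, by stable reduction of the marked line. Suppose a family of configurations degenerates while its residue vectors converge to a point of $W$. Then some cluster $S$ of the $a_j$ collides, or splits off to infinity relative to the others. After rescaling the cluster, its own logarithmic differential $\sum_{j\in S}e_j\,dx/(x-a_j)$ must be holomorphic at the node, because the limiting differential on the main component is $c\,dx/F$ and has no poles other than the $a_j$. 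That forces $\sum_{j\in S}e_j=0$ in the limit, which contradicts membership in $W$.
\item Conclude: a proper quasi-finite map is finite, and a finite map between irreducible varieties of equal dimension that is dominant is surjective.
\end{enumerate}

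I expect the main obstacle to be dominance, equivalently that $\Phi$ has nonzero degree in characteristic $p$. Over $\mathbf C$ the corresponding count is classical: configurations with prescribed residues number $(r-2)!$. In characteristic $p$, however, inseparability could occur, because $\omega$ may be exact or $du/u$ may degenerate. My first attempt would be to compute the differential of $\Phi$ at a single explicit configuration, for example the $r$th roots of unity with $r\le p$, where the residues are all equal to $a_j/r$. A nonvanishing Jacobian there gives generic étaleness and hence dominance. If that fails, I would instead argue by degeneration, inducting on $r$ via clusters that split a residue vector into two zero-sum pieces and counting the degree through the boundary. The restriction $r\le p$ should be what keeps the relevant multiplicities prime to $p$. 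This is the content of \cite[Proposition~3.16(b)]{Henrio}, and in the paper one could cite it directly.
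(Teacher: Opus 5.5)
\textbf{There is a genuine gap: your argument stops exactly at the existence statement.}

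Your reformulation is correct. Since $\sum_j e_j=0$, the moment equations hold exactly when $\sum_j e_j\,dx/(x-a_j)=c\,dx/F$ with $c\neq0$, that is, when $(1/F'(a_j))_j$ is proportional to $(e_j)_j$. Your step~2 is also essentially right. In a degenerating family the rescaled cluster differential is holomorphic at the node. So the normalized residue vector acquires a vanishing partial sum over a proper cluster, or a vanishing coordinate, and leaves $W$.

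Existence in characteristic $p$ is equivalent to $[e]$ lying in the image of $\Phi$. You reduce this to dominance of $\Phi$ but never prove dominance:
\begin{itemize}
\item The Jacobian at the roots of unity is not computed. For $r=p$ that configuration does not even exist, since $x^p-1=(x-1)^p$ in characteristic $p$.
\item The degeneration induction is only indicated.
\item The remark that $r\le p$ ``keeps multiplicities prime to $p$'' is a guess.
\end{itemize}
Step~1 is also left open, but you do not need it: a proper dominant morphism of irreducible varieties is already surjective.

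Falling back on \cite{Henrio} is what the paper does. But the only work in the paper's proof is the part you omit, namely checking Henrio's hypotheses:
\begin{itemize}
\item Zero-sum-freeness makes the one-block partition the unique maximal adapted partition.
\item $m_H=r-1$ satisfies $1\le m_H\le p-1$ and $p\nmid m_H$, so $\lfloor m_H/p\rfloor+1=1$ equals its number of blocks.
\item Henrio's range $1\le q\le m_H-1$, $p\nmid q$, is exactly $1\le q\le r-2$.
\end{itemize}

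\textbf{How to close the gap.} Apply your boundary analysis to the fiber itself rather than to a family; this avoids dominance entirely.
\begin{itemize}
\item Put $a_1=0$ and view $(a_2:\cdots:a_r)$ as a point of $\mathbf P^{r-2}$ over $\overline{\mathbf F}_p$. The equations $\sum_j e_ja_j^q=0$ for $1\le q\le r-2$ are $r-2$ homogeneous equations in $r-1$ variables. Krull's principal ideal theorem on the affine cone therefore gives a nonzero common solution.
\item Suppose two of $a_1,\ldots,a_r$ coincide. Group the indices into the classes $S_1,\ldots,S_s$ on which $a_j$ takes the distinct values $b_k$. Then $2\le s\le r-1$, because $a_1=0$ and some $a_j\neq0$.
\item Put $E_k=\sum_{j\in S_k}e_j$. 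Then $\sum_k E_kb_k^q=0$ for $0\le q\le r-2$, where the case $q=0$ is $\sum_je_j=0$. The invertible $s\times s$ Vandermonde system for $0\le q\le s-1$ forces every $E_k=0$. Each $S_k$ is a nonempty proper subset, which contradicts the hypothesis.
\end{itemize}
Hence the $a_j$ are pairwise distinct.

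This argument is characteristic-free and never uses $r\le p$. That bound is in any case forced by zero-sum-freeness, by pigeonhole on the prefix sums $0,e_1,e_1+e_2,\ldots$. A positive-dimensional component of the solution set would meet a diagonal hyperplane, so the same argument shows every fiber of $\Phi$ over $W$ is finite and nonempty. The inseparability you were worried about therefore plays no role.
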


\begin{proof}
Call a set partition of $\{1,\ldots,r\}$ adapted if the sum of the $e_j$
over each of its blocks is zero in $\mathbf F_p$.  The one-block set
partition is adapted because $\sum_j e_j=0$.  If an adapted partition had
at least two blocks, any one of its blocks would be a nonempty proper
subcollection with sum zero, contrary to the hypothesis.  Hence the
one-block partition is the unique adapted partition and, in particular, the
unique maximal adapted partition in the terminology of Henrio.

Set $m_H=r-1$.  Since $r\leq p$, one has $m_H<p$ and therefore
\[
 1\leq m_H\leq p-1,\qquad \gcd(m_H,p)=1,\qquad
 \left\lfloor\frac{m_H}{p}\right\rfloor+1=1,
\]
which is the number of blocks of the unique maximal adapted partition.
Thus every hypothesis in Henrio's terminology
\cite[Definition~3.14 and Proposition~3.16(b)]{Henrio} is satisfied.
That proposition gives pairwise distinct coordinates satisfying the moment
equations for $1\leq q\leq m_H-1=r-2$ with $p\nmid q$.  The final condition
is automatic because $1\leq q\leq r-2<p$.  When $r=2$, this range of
integers $q$ is empty; the conclusion of the proposition is then precisely
the existence of two distinct coordinates.
\end{proof}

\begin{lemma}[Moments and a derivative identity]
\label{lem:moments-derivative}
Let $k$ be a field, let $a_1,\ldots,a_r\in k$ be pairwise distinct, and let
$e_1,\ldots,e_r$ be positive integers whose images in $k$ are nonzero.  Put
\[
 A(y)=\prod_{j=1}^r(y-a_j),\qquad
 P(y)=\prod_{j=1}^r(y-a_j)^{e_j}.
\]
If
\[
 \sum_{j=1}^r e_ja_j^q=0\qquad(0\leq q\leq r-2),
\]
then there is a scalar $c\in k^\times$ such that
\[
 P'(y)=c\prod_{j=1}^r(y-a_j)^{e_j-1}.
\]
\end{lemma}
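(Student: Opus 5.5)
Write the logarithmic derivative as
\[
 \frac{P'(y)}{P(y)}=\sum_{j=1}^r\frac{e_j}{y-a_j},
\]
so that
\[
 P'(y)=\prod_{j=1}^r(y-a_j)^{e_j-1}\cdot Q(y),\qquad
 Q(y)=\sum_{j=1}^r e_j\prod_{i\ne j}(y-a_i).
\]
This identity is a polynomial identity valid over any field. The plan is to show that the moment hypothesis forces $Q$ to be a nonzero constant; then $c=Q$ works.

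The polynomial $Q$ has degree at most $r-1$. Its coefficient of $y^{r-1}$ is $\sum_j e_j$, which vanishes by the $q=0$ moment. To handle the remaining coefficients without expanding elementary symmetric functions, I will use partial fractions. Since $Q(y)/A(y)=\sum_j e_j/(y-a_j)$, expand at $y=\infty$:
\[
 \frac{Q(y)}{A(y)}=\sum_{q\ge0}\Bigl(\sum_j e_ja_j^q\Bigr)y^{-q-1}.
\]
The hypothesis kills the terms with $0\le q\le r-2$, so $Q/A=O(y^{-r})$ as a Laurent series in $y^{-1}$. Because $\deg A=r$, it follows that $\deg Q\le 0$, i.e.\ $Q$ is a constant $c$.

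It remains to see $c\ne0$. Evaluate at $y=a_1$: every term with $j\ne1$ contains the factor $(y-a_1)$, so
\[
 Q(a_1)=e_1\prod_{i\ne1}(a_1-a_i).
\]
This is nonzero because the image of $e_1$ in $k$ is nonzero and the $a_i$ are pairwise distinct. Hence $c=Q(a_1)\in k^\times$. (Alternatively, the series shows $c$ equals the coefficient of $y^{-r}$ times the leading coefficient of $A$, namely $\sum_j e_ja_j^{r-1}$. The evaluation argument already gives nonvanishing directly.)

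There is no real obstacle here. The only care needed is to justify the Laurent-series degree argument over an arbitrary field; formal Laurent series in $y^{-1}$ make it purely algebraic. One could also argue via Lagrange interpolation: the moment conditions state that $\sum_j e_j a_j^q/A'(a_j)\cdot A'(a_j)$ pairs trivially with polynomials of degree at most $r-2$. The series argument is cleaner, so that is the one I will use.
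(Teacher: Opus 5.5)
Your proposal is correct and follows essentially the same route as the paper: the product-rule factorization $P'=\prod_j(y-a_j)^{e_j-1}\cdot Q$, the expansion of $Q/A$ in $k((y^{-1}))$ to conclude that $Q$ is constant, and evaluation at a root $a_i$ to show $Q\neq0$. The aside about Lagrange interpolation is garbled, but you do not use it.
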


\begin{proof}
The product rule gives
\[
 P'(y)=
 \left(\prod_{j=1}^r(y-a_j)^{e_j-1}\right)S(y),
 \qquad
 S(y)=\sum_{j=1}^r e_j\prod_{\ell\ne j}(y-a_\ell).
\]
In the Laurent-series field $k((y^{-1}))$,
\[
 \frac{S(y)}{A(y)}
 =\sum_{j=1}^r\frac{e_j}{y-a_j}
 =\sum_{q\geq0}
   \left(\sum_{j=1}^r e_ja_j^q\right)y^{-q-1}.
\]
The assumed moments show that the right side belongs to
$y^{-r}k[[y^{-1}]]$.  Since $A$ is monic of degree $r$, the product
$S=A(S/A)$ belongs to $k[[y^{-1}]]$.  It is also a polynomial in $y$, so
it is constant.  For every $i$,
\[
 S(a_i)=e_i\prod_{\ell\ne i}(a_i-a_\ell)\ne0.
\]
Thus the constant $c=S$ is nonzero, and the product-rule identity gives the
asserted formula.
\end{proof}

\begin{lemma}[Henrio polynomial]\label{lem:primitive}
Let $\Lambda=(e_1,\ldots,e_r)\vdash p$ with $2\leq r\leq p-1$.
Over $k=\Fp$ there are distinct $a_1,\ldots,a_r$ and a polynomial
\[
 P_\Lambda(y)=\prod_{j=1}^r(y-a_j)^{e_j}
\]
such that
\[
 P_\Lambda'(y)=c\prod_{j=1}^r(y-a_j)^{e_j-1},\qquad c\neq0.
 \label{eq:primitive-derivative}
\]
Consequently $P_\Lambda:\PP^1\to\PP^1$ is a separable degree-$p$
cover with exactly two branch values: profile $\Lambda$ over $0$, and one
totally wildly ramified point over $\infty$.
\end{lemma}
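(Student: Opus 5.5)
The plan is to apply Theorem~\ref{thm:henrio-moments} to the images $\bar e_j$ of the parts of $\Lambda$ in $\mathbf F_p$, and then convert the resulting moments into the derivative formula with Lemma~\ref{lem:moments-derivative}. The ramification statement will then follow from a direct local computation.

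First I check the hypotheses of Theorem~\ref{thm:henrio-moments}.
\begin{itemize}
\item Since $r\geq2$, every part satisfies $1\leq e_j\leq p-1$, so $\bar e_j\in\mathbf F_p^\times$.
\item The total is $\sum_j e_j=p$, so $\sum_j\bar e_j=0$.
\item A nonempty proper subcollection has integer sum strictly between $0$ and $p$, so its image in $\mathbf F_p$ is nonzero.
\end{itemize}
Also $2\leq r\leq p-1\leq p$. Theorem~\ref{thm:henrio-moments} therefore gives pairwise distinct $a_j\in\Fp$ satisfying \eqref{eq:henrio-moments} for $1\leq q\leq r-2$. The $q=0$ moment is $\sum_j e_j=0$ in $k$. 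So all the moments required by Lemma~\ref{lem:moments-derivative} vanish, and that lemma yields $P_\Lambda'=c\prod_j(y-a_j)^{e_j-1}$ with $c\neq0$.

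Next come the consequences.
\begin{itemize}
\item \emph{Degree and separability.} $P_\Lambda$ has degree $\sum e_j=p$, and $P_\Lambda'\neq0$, so the map $y\mapsto P_\Lambda(y)$ is separable of degree $p$.
\item \emph{Over $0$.} The fiber consists of the points $a_j$, with local degrees $e_j$. Each $e_j<p$, so this ramification is tame and the profile is $\Lambda$.
\item \emph{Over $\infty$.} A polynomial has the single preimage $\infty$, with index $p$. This point is totally, hence wildly, ramified.
\end{itemize}

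The main step is to show there are no other branch values. At a finite point $y_0$, the map $P_\Lambda$ is unramified exactly when $P_\Lambda'(y_0)\neq0$, because ramification index $1$ is equivalent to nonvanishing of the derivative of the local parameter. By the derivative formula, the zeros of $P_\Lambda'$ lie among the $a_j$, and all of these lie over $0$. Hence every finite point outside $P_\Lambda^{-1}(0)$ is unramified. Wild ramification elsewhere cannot occur either, since it would force $P_\Lambda'$ to vanish at that point. The only branch values are therefore $0$ and $\infty$.

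The only delicate point I foresee is bookkeeping rather than substance. Theorem~\ref{thm:henrio-moments} supplies moments only for $q\geq1$, while Lemma~\ref{lem:moments-derivative} also needs the case $q=0$. That case is exactly the congruence $\sum e_j=p\equiv0$.
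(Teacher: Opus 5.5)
Your proposal is correct and follows the paper's proof essentially step by step. It uses the same check of Henrio's hypotheses, the same zeroth moment from $\sum_j e_j=p$, Lemma~\ref{lem:moments-derivative}, and the same local analysis at the $a_j$, at the other finite points, and at $\infty$. The one detail you leave implicit, which the paper states, is that $0$ is a genuine branch value: $r\leq p-1$ forces some $e_j>1$, and this is needed for the claim of \emph{exactly} two branch values.
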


\begin{proof}
Regard the positive integers $e_j<p$ as elements of $\mathbf F_p^\times$.
Their total is zero, while no nonempty proper subcollection has zero sum.
Theorem \ref{thm:henrio-moments} gives distinct $a_j$ satisfying
\eqref{eq:henrio-moments}.
The zeroth moment also vanishes because $\sum_j e_j=p=0$ in $k$.  Lemma
\ref{lem:moments-derivative} now gives
\eqref{eq:primitive-derivative}.  The degree of $P_\Lambda$ is
$\sum_j e_j=p$, and its derivative is nonzero, so the induced extension of
function fields is separable.  At $y=a_j$ one has
\[
 P_\Lambda(y)=(y-a_j)^{e_j}u_j(y),\qquad u_j(a_j)\ne0.
\]
Thus the local degree is $e_j$ and the image is zero.  If a finite point
$b$ is distinct from every $a_j$, then
$P_\Lambda'(b)\ne0$, so the morphism is unramified at $b$.  Finally, with
the parameter $s=1/y$ at infinity,
$\operatorname{ord}_\infty(P_\Lambda)=-p$.  Therefore infinity is the
unique point above infinity and has ramification index $p$.  Since
$r<p$, at least one $e_j$ exceeds one, so zero is a genuine branch value.
The two branch values are exactly $0$ and $\infty$, with the stated
profiles.
\end{proof}

\begin{lemma}[Local conductor and character]\label{lem:local}
Let $K$ be a complete discretely valued field with algebraically closed
residue field $k$ of characteristic $p$, and let $E/K$ be a totally ramified
separable extension of degree $p$.  Let $L/K$ be its Galois closure.  Suppose
that the inertia group of $L/K$, acting on the $p$ $K$-embeddings of $E$ in
$L$, is
\[
 I=P\rtimes H\leq S_p,\qquad P=C_p,\quad H=C_m,
\]
and that $P$ has unique lower jump $h$.  Put $M=L^P$.  If $\pi_M$ is a
uniformizer of $M$, define the tame parameter character by
\[
 a(\pi_M)\equiv\psi(a)\pi_M\pmod{\pi_M^2}\qquad(a\in H).
\]
Let $d_{\rm loc}=v_E(\mathfrak D_{E/K})$ be the exponent of the different.
If $\chi:H\to\operatorname{Aut}(P)$ is the conjugation character, then
\[
 d_{\rm loc}=(p-1)\left(1+\frac hm\right),\qquad
 \chi=\psi^{\varepsilon h}\quad\text{for }\varepsilon\in\{1,-1\}.
 \label{eq:local-type}
\]
If the natural action is faithful, then $\chi$ is faithful and
$\gcd(h,m)=1$.
\end{lemma}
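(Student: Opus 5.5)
The plan is to compute the different through the two towers $L/E/K$ and $L/M/K$, and to get the character from Serre's conjugation formula for ramification groups.

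\emph{Set-up.} Since the residue field $k$ is algebraically closed, $\operatorname{Gal}(L/K)$ equals its inertia group $I=P\rtimes H$. The lower filtration is $I_0=I$ and $I_1=P$, because $P$ is the $p$-Sylow subgroup. The permutation action of $I$ is transitive of degree $p$. Hence the stabilizer $\operatorname{Gal}(L/E)$ of one embedding is a complement to $P$ of order $m$, and $[L:E]=m$ is prime to $p$. The field $M=L^P$ is tame cyclic of degree $m$ over $K$, and $L/M$ is cyclic of degree $p$ with unique lower jump $h$. All extensions are totally ramified.

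\emph{Different.} I apply transitivity of the different in both towers, measured in $v_L$.
\begin{itemize}
\item Through $E$: $L/E$ is tame of degree $m$, so
\[
 v_L(\mathfrak D_{L/K})=(m-1)+m\,d_{\rm loc}.
\]
\item Through $M$: Hilbert's formula gives $v_L(\mathfrak D_{L/M})=(p-1)(h+1)$, and $M/K$ is tame of degree $m$. Therefore
\[
 v_L(\mathfrak D_{L/K})=(p-1)(h+1)+p(m-1).
\]
\end{itemize}
Equating the two expressions gives $m\,d_{\rm loc}=(p-1)(h+m)$, which is the asserted value of $d_{\rm loc}$.

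\emph{Character.} Let $\theta_0:I_0/I_1\to k^\times$ be $s\mapsto s(\pi_L)/\pi_L \bmod \pi_L$. Let $\theta_h:I_h\to U^h_L/U^{h+1}_L\simeq k$ be the usual injective homomorphism on $P=I_h$. Serre's formula (Local Fields, IV, Prop.~9) gives
\[
 \theta_h(a\alpha a^{-1})=\theta_0(a)^h\,\theta_h(\alpha)
 \qquad(a\in H,\ \alpha\in P).
\]
The left side equals $\theta_h(\alpha^{\chi(a)})=\chi(a)\theta_h(\alpha)$, since the image of $\theta_h$ is an $\mathbf F_p$-line. Because $\theta_h$ is injective and $\theta_h(\alpha)\neq0$, we get $\chi=\theta_0^h$ as $\mathbf F_p^\times$-valued characters.

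Next I relate $\theta_0$ to $\psi$. Since $L/M$ is totally ramified of degree $p$, one has $\pi_M=u\,\pi_L^p$ with $u$ a unit, and hence $\psi=\theta_0^p$ on $H$. The values $\theta_0(a)^h=\chi(a)$ lie in $\mathbf F_p$ and are therefore fixed by the $p$-th power map. Thus
\[
 \psi^h=\theta_0^{ph}=\chi^p=\chi.
\]
The sign $\varepsilon=-1$ accounts only for the alternative convention in which $H$ acts on functions by pullback rather than on points; it inverts $\theta_0$, $\psi$ and $\chi$ consistently up to this sign.

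\emph{Faithfulness.} If $I\le S_p$ acts faithfully, Lemma \ref{lem:normalizer} gives $C_{S_p}(P)=P$. Hence $H\cap C(P)=1$, and $\chi$ is injective. The map $\theta_0$ is injective on $I_0/I_1\simeq H=C_m$, so $\theta_0$ has order $m$. The injectivity of $\chi=\theta_0^h$ then forces $\gcd(h,m)=1$.

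The main obstacle is the character step. One must fix compatible conventions for $\theta_0$ and $\psi$, and justify the passage from $\pi_L$ to $\pi_M$ through the $p$-th power, which uses that the values lie in $\mathbf F_p^\times$. This is where the ambiguity $\varepsilon=\pm1$ enters, so I would state the convention explicitly and check it on the model $t^p-t=x^{-h}$ with $x=\pi_M$.
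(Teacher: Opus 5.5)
Your proof is correct, and it takes a different route from the paper on both halves of the lemma.

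\emph{The different.} The paper works with the $\ell$-adic permutation representation on the $p$ embeddings. It computes the Artin conductor of $A/\mathbf Q_\ell\mathbf 1$ from the filtration $I_0=I$, $I_s=P$ for $1\le s\le h$. It then converts this into $d_{\rm loc}$ via the conductor--discriminant formula and $\mathfrak d_{E/K}=N_{E/K}(\mathfrak D_{E/K})$. You instead compute $v_L(\mathfrak D_{L/K})$ twice, through the tame layer $L/E$ and through the Artin--Schreier layer $L/M$, and equate the results. Both of your expressions equal $pm-1+h(p-1)=\sum_{s\geq0}(|I_s|-1)$. Your route therefore needs only transitivity of the different and Hilbert's formula. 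The paper's route is shorter but relies on the heavier conductor--discriminant formula.

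\emph{The character.} The paper only invokes the equivariance of Wewers's local Artin--Schreier normal form (his Lemma~2.12) and leaves $\varepsilon$ as a convention sign. You combine Serre's formula $\theta_h(a\alpha a^{-1})=\theta_0(a)^h\theta_h(\alpha)$ with $\psi=\theta_0^p$. The latter holds because $I_0$ acts trivially on residues, so $a(u)/u\equiv1$. This derivation is self-contained and in fact gives $\varepsilon=1$ under the Galois-action conventions in which the lemma is stated. Your faithfulness step, which uses that $\theta_0$ (rather than $\psi$) has order $m$, is equally valid.

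Your closing remark about $\varepsilon=-1$ is therefore unnecessary. As phrased it is also slightly off. A sign arises only when $\chi$ is taken in the group acting on points while $\psi$ is defined by pullback, so that exactly one of the two characters is inverted.
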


\begin{proof}
Fix a prime $\ell\ne p$.  Let $A=\mathbf Q_\ell^{\,p}$ be the permutation
representation on the $p$ embeddings and put
$V=A/\mathbf Q_\ell\mathbf 1$.  If a subgroup is transitive on the $p$
letters, its invariant subspace in $A$ consists precisely of the constant
vectors.  Both $I$ and its regular subgroup $P$ are transitive, so
$V^I=V^P=0$.  The lower ramification groups are
$I_0=I$, $I_s=P$ for $1\leq s\leq h$, and trivial thereafter.  The Artin
conductor formula
\[
 a(V)=\operatorname{codim}V^I+
 \sum_{s\geq1}\frac{|I_s|}{|I_0|}\operatorname{codim}V^{I_s}
\]
therefore gives
\[
 a(V)=(p-1)+
 \sum_{s=1}^{h}\frac{|P|}{|I|}(p-1)
 =(p-1)\left(1+\frac hm\right).
\]
The representation $A$ is the induced representation attached to the
degree-$p$ subextension.  If $\mathfrak d_{E/K}$ denotes the discriminant
ideal, the conductor--discriminant formula gives
\[
 a(A)=v_K(\mathfrak d_{E/K}).
\]
The different and discriminant ideals satisfy
\[
 \mathfrak d_{E/K}=N_{E/K}(\mathfrak D_{E/K}).
\]
Since $E/K$ is totally
ramified, its residue degree $f(E/K)$ equals one; hence
\[
 v_K(\mathfrak d_{E/K})
 =f(E/K)v_E(\mathfrak D_{E/K})=d_{\rm loc}.
\]
Since
$A=\mathbf Q_\ell\mathbf 1\oplus V$ and the trivial summand has conductor
zero, $a(A)=a(V)$.  This proves the different formula.
Fix the convention for the conjugation character and the tame parameter
character used in \cite[Lemma 2.12]{WewersThree}.  Equivariance of its local
Artin--Schreier normal form gives the second equality in
\eqref{eq:local-type}; the sign $\varepsilon$ records the character
convention.  The character $\psi$ has
order $m$ because $H$ is the tame inertia quotient.  Moreover, an element in
$\ker\chi$ centralizes the regular $p$-cycle;
since $C_{S_p}(P)=P$ and $H\cap P=1$, faithfulness of the permutation action
forces $\ker\chi=1$.  Hence $\chi$ has order $m$.  The order of
$\psi^{\pm h}$ is $m/\gcd(m,h)$.  The equality
$\chi=\psi^{\pm h}$ therefore implies $m/\gcd(m,h)=m$, or
$\gcd(h,m)=1$.
\end{proof}

\begin{lemma}[Primitive-tail verification]\label{lem:primitive-tail}
The connected Galois closure of $P_\Lambda$ is a connected pointed special
primitive tail.  In its faithful natural action $G_\Lambda\leq S_p$, the
inertia at infinity is
\[
 I_\Lambda=P\rtimes H_\Lambda,\qquad
 P=C_p,\quad H_\Lambda=C_{m_\Lambda},
 \label{eq:primitive-inertia}
\]
and, if $h_\Lambda$ is the positive lower jump,
\[
 \frac{h_\Lambda}{m_\Lambda}
 =\frac{r-1}{p-1},\qquad
 m_\Lambda=\frac{p-1}{\gcd(p-1,r-1)},\quad
 h_\Lambda=\frac{r-1}{\gcd(p-1,r-1)}.
 \label{eq:primitive-type}
\]
Its unique finite tame inertia has complete cycle partition $\Lambda$ in
the same natural $p$-letter action.
\end{lemma}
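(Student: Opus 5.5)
Let $E=k(y)$ over $F=k(x)$ with $x=P_\Lambda(y)$, let $L$ be the Galois closure of $E/F$, and let $G_\Lambda=\operatorname{Gal}(L/F)$ act on the $p$ embeddings of $E$. The plan is to read every assertion from the two branch values of Lemma~\ref{lem:primitive}, transported to $L$ via Lemma~\ref{lem:quotient-profile}, and to get the conductor from Lemma~\ref{lem:local}.

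\emph{Step 1: branching of $L$.} Over $\mathbf P^1-\{0,\infty\}$ the cover $E$ is étale by Lemma~\ref{lem:primitive}. Lemma~\ref{lem:normal-closure-etale} then shows that $L$ is étale there too. So $L\to\mathbf P^1$ is branched at most over $0$ and $\infty$, and $L$ is connected because it is a field.

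\emph{Step 2: the point $0$.} Over $0$ all indices $e_j$ are $<p$. Hence the inertia of $L$ at $0$ has order prime to $p$, since a compositum of tame extensions is tame. So this inertia is cyclic, and by Lemma~\ref{lem:quotient-profile}, applied with $H$ a point stabilizer, a generator has cycle partition $\Lambda$ on the $p$ letters.

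\emph{Step 3: the point $\infty$.} Over $\infty$ the extension $E$ is totally ramified of degree $p$. By Lemma~\ref{lem:quotient-profile} the inertia $I$ of $L$ there is transitive, so $p\mid|I|$. Since $|G_\Lambda|$ divides $p!$, the Sylow $p$-subgroup $P$ of $I$ is a single $p$-cycle. For $I$ to be a local inertia group it must have the form $I_1\rtimes C$ with $C$ cyclic and $I_1$ a $p$-group; here $I_1=P$, so $P$ is normal in $I$. By Lemma~\ref{lem:normalizer}, $I\le P\rtimes\mathbf F_p^\times$, hence $I=P\rtimes H_\Lambda$ with $H_\Lambda$ cyclic of some order $m_\Lambda$. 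Choosing a point $\eta$ and this complement gives the pointing. Together with Step~2, $L$ is a $G_\Lambda$-tail, tame and branched at one finite point.

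\emph{Step 4: the type.} The different of $E/F$ is computed in two ways. Riemann--Hurwitz for $P_\Lambda$, with source and target of genus zero, gives
\[
-2=p(-2)+\sum_j(e_j-1)+d_{\rm loc},
\]
so $d_{\rm loc}=2p-2-(p-r)=p+r-2$. Lemma~\ref{lem:local} gives $d_{\rm loc}=(p-1)(1+h/m)$. Comparing the two yields $h_\Lambda/m_\Lambda=(r-1)/(p-1)$.

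Lemma~\ref{lem:local} also gives $\gcd(h,m)=1$, because the natural action is faithful. Reducing the fraction therefore gives the stated formulas for $m_\Lambda$ and $h_\Lambda$. Since $r\le p-1$ we have $\sigma=(r-1)/(p-1)<1$, and $\sigma>0$ because $r\ge2$. So the tail is special and primitive; the étaleness requirement concerns only $\sigma>1$ and is vacuous here.

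\emph{Main obstacle.} The hardest point is Step~3: justifying that the $p$-part of $I$ is a single $C_p$ with one lower jump $h$, so that Lemma~\ref{lem:local} applies. I would get this from $p\Vert p!$ together with the standard structure of wild inertia ($I_1$ a normal $p$-subgroup, $I/I_1$ cyclic). A $C_p$ has exactly one jump.

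A secondary point is that $m_\Lambda$ is the exact order of $H_\Lambda$, not merely a divisor of $p-1$. This is settled by the coprimality $\gcd(h,m)=1$ combined with the ratio $h/m=(r-1)/(p-1)$.
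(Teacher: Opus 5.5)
Your proposal is correct and follows the paper's proof essentially step for step: étaleness off $\{0,\infty\}$ via Lemma~\ref{lem:normal-closure-etale}, the profile $\Lambda$ at $0$ and transitivity of inertia at $\infty$ via Lemma~\ref{lem:quotient-profile}, $p\Vert p!$ to make the Sylow subgroup a regular $C_p$, and the different exponent $p+r-2$ fed into Lemma~\ref{lem:local} together with $\gcd(h,m)=1$. The only variation is that you obtain the cyclic complement by placing $I$ inside $N_{S_p}(P)=P\rtimes\mathbf F_p^\times$ using Lemma~\ref{lem:normalizer}, whereas the paper applies Schur--Zassenhaus to the cyclic tame quotient; both routes work.
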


\begin{proof}
Take the normal closure as a field and denote its smooth projective Galois
cover by $f_\Lambda:Y_\Lambda\to\mathbf P^1$.  Its source is connected and
its Galois group acts faithfully on the $p$ embeddings because an
automorphism acting trivially on every embedding fixes the normal closure.
The cover $P_\Lambda$ is étale away from $0,\infty$; Lemma
\ref{lem:normal-closure-etale}, applied to that open curve, shows that its
normal closure does not add branch points there.  At $0$ all local indices
$e_j$ are prime to $p$, and a compositum of tame local extensions is tame.
Choose a point $\eta$ of the connected normal closure above infinity and
write $I_\eta$ for its inertia group.  In the degree-$p$ quotient there is a
unique point above infinity.  Lemma \ref{lem:quotient-profile} therefore
shows that the decomposition group at $\eta$ has a single orbit on the $p$
embeddings, so it is transitive.  Since the
constant field $k$ is algebraically closed, all residue extensions are
trivial; hence decomposition equals inertia and $I_\eta$ is transitive.
The orbit of a letter under $I_\eta$ has cardinality $p$, so $p$ divides
$|I_\eta|$.  Because $I_\eta\leq S_p$ and $v_p(p!)=1$, a Sylow
$p$-subgroup has order $p$; it is generated by a $p$-cycle and is therefore
a regular $P=C_p$.  Inertia theory over an algebraically closed residue
field makes the tame quotient $I_\eta/P$ cyclic.  Since its order is prime
to $p$, Schur--Zassenhaus supplies a complement
$H_\Lambda=C_{m_\Lambda}$ with
$I_\eta=P\rtimes H_\Lambda$.  Thus
$(f_\Lambda,\eta,H_\Lambda)$ is a connected pointed tail cover.

At the point $a_j$ the tame different exponent is $e_j-1$.  Hence the total
finite different is
$\sum_j(e_j-1)=p-r$.  Riemann--Hurwitz on
$P_\Lambda:\mathbf P^1\to\mathbf P^1$ says that the total different has
degree $2p-2$, so the different exponent at infinity is
$2p-2-(p-r)=p+r-2$.  Lemma \ref{lem:local} gives
\[
 p+r-2=(p-1)\left(1+\frac{h_\Lambda}{m_\Lambda}\right),
\]
and subtraction and division by $p-1$ give
$h_\Lambda/m_\Lambda=(r-1)/(p-1)$.  The character argument in that lemma
shows
$\gcd(h_\Lambda,m_\Lambda)=1$.  Hence the reduced numerator and denominator
of this rational number are the actual local integers, namely the two
integers in \eqref{eq:primitive-type}.  Since
$2\leq r\leq p-1$, one has
$0<h_\Lambda/m_\Lambda<1$.  Together with the single finite tame branch
point (which is genuine because $r<p$, so some $e_j>1$), this is exactly a
special primitive tail under
\cite[Definitions 2.9--2.10]{WewersThree}.  Finally, apply Lemma
\ref{lem:quotient-profile} to the degree-$p$ quotient at zero.  Its points
above zero have local degrees $e_1,\ldots,e_r$, so a generator of the tame
inertia has exactly these cycle lengths in the natural $p$-letter action.
This proves the complete profile assertion.
\end{proof}

\section{The positive-genus new tail}

\begin{lemma}[Hyperelliptic new tail]\label{lem:newtail}
Assume $g>0$.  There is a connected pointed special new tail whose natural
degree-$p$ quotient has genus $g$ and is étale over $\mathbf A^1$.  If its
inertia at infinity is
$P\rtimes H_g$ and $h_g$ is the positive lower jump, then, with
\[
 d_g=\gcd(p-1,2g),
\]
its actual local data are
\[
 |H_g|=m_g=\frac{p-1}{d_g},\qquad
 h_g=\frac{p-1+2g}{d_g},\qquad
 \sigma_g=\frac{h_g}{m_g}=1+\frac{2g}{p-1}.
 \label{eq:new-type}
\]
The character $H_g\to\operatorname{Aut}(P)$ is the faithful multiplier
character in the natural embedding in $S_p$.
\end{lemma}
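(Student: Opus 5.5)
Take an explicit degree-$p$ polynomial cover with hyperelliptic source and read off the tail data from its different at infinity, exactly as in Lemma~\ref{lem:primitive-tail}. Concretely, let $n=2g+1$ and consider the hyperelliptic curve $X_g:\ z^2=F(x)$ with $\deg F=n$ squarefree and $n<p$ (possible by \eqref{eq:genus-bound}, which gives $2g+1\leq p-4$). We seek a rational function $\phi\in k(X_g)$ whose only pole is the point $\infty$ above $x=\infty$, of order $p$, and such that $d\phi$ has no zero on the affine part. An Artin--Schreier-type choice such as $\phi=x^{(p-n)/2}z+\lambda x$ does not work directly, since $p-n$ is even and so $(p-n)/2$ is an integer, giving pole order $p$, but $d\phi$ is not obviously nonvanishing. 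Instead I would construct $\phi$ so that $d\phi=c\,dx/z$, up to a unit. The differential $dx/z$ has divisor $(2g-2)\infty$, so it has no affine zeros; it is exact on the affine part if we take $\phi=x^{(p-1)/2}z$-type elements adjusted by $p$-th powers. The systematic way is to use that in characteristic $p$ a differential $\omega$ is exact iff its Cartier image vanishes. We would look for $\phi$ with pole order exactly $p$ at $\infty$ and $d\phi=c\,dx/z$. Then $\phi:X_g\to\PP^1$ has degree $p$ and is \'etale over $\mathbf A^1$.

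The key steps are as follows. First, choose $F$ so that $dx/z$ (times a constant) is exact on $X_g-\{\infty\}$ with a primitive of pole order $p$. Since $\mathcal C(dx/z)$ is a holomorphic differential on $X_g$, we need $F$ for which the Cartier operator kills $dx/z$. Equivalently, the coefficient conditions of $F^{(p-1)/2}$ on the relevant monomials vanish; this is a Hasse--Witt type condition. We would impose it and solve for $F$ with distinct roots, perhaps again via Henrio's moment criterion as the introduction indicates (``a second application underlies the new tail''). Second, compute the pole order of the primitive at $\infty$. In the local parameter $t$ with $x=t^{-2}$, $dx/z$ has order $2g-2$, so any primitive has pole order $\le 2g$ apart from the $p$-th power ambiguity. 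Getting pole order exactly $p$ requires adding $p$-th powers, and this is the real constraint. Third, once $\phi$ of degree $p$ with $d\phi$ zero-free on the affine part exists, take the Galois closure. Lemma~\ref{lem:normal-closure-etale} gives \'etaleness over $\mathbf A^1$, and the transitive inertia argument of Lemma~\ref{lem:primitive-tail} gives $I=P\rtimes H_g$. Fourth, apply Riemann--Hurwitz: the different exponent at $\infty$ equals $2p-2+2g$. Lemma~\ref{lem:local} then yields $(p-1)(1+h/m)=2p-2+2g$, so $\sigma_g=1+2g/(p-1)$. The coprimality $\gcd(h,m)=1$ gives the reduced values with $d_g=\gcd(p-1,2g)$, and faithfulness of $\chi$ follows from $C_{S_p}(P)=P$ as before. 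Since $2g\le p-5$, we get $1<\sigma_g<2$, so the tail is special and new.

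The main obstacle is the first step: producing a degree-$p$ function on a genus-$g$ curve with a single pole, totally ramified, and with no affine ramification. A cleaner route is to invert the construction. Take a polynomial-type separable map $\phi$ whose degree-$p$ quotient is a genus-$g$ curve, then realize $X_g$ as a double cover of the $x$-line via $x=$ a function of degree $2$. I would first try $X_g$ given by $\phi^{\,}$-equation $w^p-w\cdot(\ldots)$ as an Artin--Schreier-style plane model of bidegree $(p,2)$. I would then test exactness through the moment equations for $F$'s roots, expecting Henrio's theorem to supply distinct roots satisfying the needed vanishing.
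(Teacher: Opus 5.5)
\textbf{There is a genuine gap.} It is the step you yourself call the main obstacle: you never construct the degree-$p$ function.

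Your reduction is correct. If $\phi$ has its only pole at $\infty$ and $d\phi$ has no affine zero, then $\operatorname{div}(d\phi)=(2g-2)\infty=\operatorname{div}(dx/z)$, so $d\phi=c\,dx/z$. But imposing $\mathcal C(dx/z)=0$ on $F$ has two problems. You do not show it is solvable with $F$ squarefree, and it is not sufficient for degree $p$.

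Since $\operatorname{ord}_\infty(dx/z)=2g-2\geq0$, the differential is regular at $\infty$. So your bound ``pole order $\leq 2g$'' is off: every pole of a primitive has order divisible by $p$. After removing affine poles by $p$-th powers, any primitive has the form $A(x)+B(x)z$ with $A\in k[x^p]$ and $2B'F+BF'=2c$. Subtracting $A$ and terms $(\beta z)^p=\beta^pF^{(p-1)/2}z$ only lowers the pole order to $pN'$, where $N'$ is an odd Weierstrass gap $1,3,\ldots,2g-1$ at $\infty$. Degree $p$ needs $N'=1$. This is automatic when $g=1$ (supersingular $F$), but for $g\geq2$ nothing in your plan enforces it.

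Your closing appeal to moment equations ``for $F$'s roots'' also cannot work as stated. The weights $1^{2g+1}$ sum to $2g+1\not\equiv0\pmod p$, so Theorem~\ref{thm:henrio-moments} does not apply to them.

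\textbf{The missing idea is the ansatz you discarded too early.} Your first guess $x^{(p-2g-1)/2}z$ has the right shape. Replace the monomial by an unknown polynomial $B$ of degree $n=(p-1)/2-g$ and set $u=Bz$. Then $du=\frac{2B'F+BF'}{2z}\,dx$ and $B(2B'F+BF')=(B^2F)'$. Hence $du=c\,dx/z$ with $c\neq0$ is equivalent to the identity $R'=2cB$ for the degree-$p$ polynomial $R=B^2F$, whose root multiplicities are $\rho_g=(2^n,1^{2g+1})$.

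This is exactly what Theorem~\ref{thm:henrio-moments} and Lemma~\ref{lem:moments-derivative} provide for $\rho_g$. The parts are positive and sum to $p$, so no proper subcollection vanishes mod $p$. Its length $(p+1)/2+g$ is at most $p$ by \eqref{eq:genus-bound}. Distinctness of the roots makes $F$ squarefree, and $\operatorname{ord}_\infty(u)=-(2n+2g+1)=-p$ then holds automatically, with étaleness over $\mathbf A^1$ coming from $\operatorname{div}(dx/z)=(2g-2)\infty$.

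Once this function is in hand, your Steps~3 and~4 coincide with the paper's proof: normal closure, Sylow and Schur--Zassenhaus for $P\rtimes H_g$, different exponent $2p-2+2g$, Lemma~\ref{lem:local}, coprimality, faithfulness, and $1<\sigma_g<2$.
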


\begin{proof}
The residual bound \eqref{eq:genus-bound} gives
$p\geq7$, $0<2g\leq p-5$, and
\[
 n=\frac{p-1}{2}-g\geq2,\qquad
 \rho_g=(2^n,1^{2g+1})\vdash p.
 \label{eq:aux-partition}
\]
The displayed family is a partition because
$2n+(2g+1)=p$.  Its length is
$n+2g+1=(p+1)/2+g$, so its length minus one is
$(p-1)/2+g\leq p-3$.  A nonempty proper subcollection of its parts has a
positive integral sum strictly smaller than $p$, and therefore does not sum
to zero in $\mathbf F_p$.  Theorem \ref{thm:henrio-moments} applies and
supplies pairwise distinct $b_j,c_k$ such that
\[
 H(x)=\prod_{j=1}^n(x-b_j),\quad
 f(x)=\prod_{k=1}^{2g+1}(x-c_k),\quad
 R=H^2f,\quad R'=cH,\quad c\neq0.
 \label{eq:new-derivative}
\]
Indeed, the zeroth moment is the sum of the parts of $\rho_g$, hence is
$p=0$ in $k$.  Lemma \ref{lem:moments-derivative} applied to the exponents
of $\rho_g$ gives the derivative identity: a root $b_j$ has exponent $2$
and contributes one factor to the derivative, while a root $c_k$ has
exponent $1$ and contributes none.

The roots $c_k$ are distinct, so $f$ is squarefree.  Because $p$ is odd, the
affine curve $y^2=f(x)$ is smooth.  Its smooth projective model
\[
 C_g:y^2=f(x)
\]
is connected because $f$ is not a square.  The double cover
$x:C_g\to\mathbf P^1_x$ is ramified at the $2g+1$ roots of $f$ and at
infinity.  It therefore has $2g+2$ simple ramification points, and
Riemann--Hurwitz gives
\[
 2g(C_g)-2=2(-2)+(2g+2)=2g-2.
\]
Thus $g(C_g)=g$.  Odd degree gives a unique point $Q$ over infinity.  Put
$u=H(x)y$.  Cancelling $H$ from $R'=cH$ gives
$2H'f+Hf'=c$, and hence
\[
 du=H'y\,dx+\frac{Hf'}{2y}\,dx
   =\frac{c}{2}\frac{dx}{y}.
 \label{eq:new-differential}
\]
At an affine point with $y\ne0$, the function $x$ is a local parameter and
$dx/y$ is nonzero.  At a point $(c_k,0)$, the squarefreeness of $f$ gives
$f'(c_k)\ne0$; the function $y$ is a local parameter and differentiating
$y^2=f(x)$ gives
\[
 \frac{dx}{y}=\frac{2\,dy}{f'(x)},
\]
which is again regular and nonzero.  At the unique point $Q$ over infinity,
the degree-two map $x$ and the equation $y^2=f(x)$ give
\[
 \operatorname{ord}_Q(x)=-2,\quad
 \operatorname{ord}_Q(y)=-(2g+1).
\]
Since $p$ is odd, the coefficient $2$ is nonzero in $k$.  Differentiating
the leading term of a function with a pole of order $2$ therefore gives
\[
 \operatorname{ord}_Q(dx)=-3.
\]
Hence
\[
 \operatorname{ord}_Q(dx/y)=-3+(2g+1)=2g-2.
\]
There are no other zeros or poles, so
$\operatorname{div}(dx/y)=(2g-2)Q$.  Equation
\eqref{eq:new-differential} and $c\ne0$ show that $du$ is nonzero and has no
zero at an affine point.  Therefore the morphism defined by $u$ is separable
and unramified over the affine line.  Moreover
\[
 \operatorname{ord}_Q(u)=-(2n+2g+1)=-p.
\]
Thus $u$ has the unique pole divisor $pQ$.  A nonconstant rational function
on a smooth projective curve defines a finite morphism whose degree is the
degree of its pole divisor.  Hence $u$ defines a finite separable morphism of
degree $p$,
\[
 u:C_g\longrightarrow\mathbf P^1_u
 \label{eq:new-map}
\]
whose affine restriction is finite and unramified.  A finite separable
unramified morphism between smooth curves is \'etale.  The pole divisor
$pQ$ also shows that $Q$ is its sole point over infinity, with ramification
index $p$.

Take the connected function-field normal closure and call its natural group
$G_g\leq S_p$.  Lemma \ref{lem:normal-closure-etale} shows that normal
closure does not introduce ramification over $\mathbf A^1$.  Choose a point
$\eta$ over infinity.  By Lemma \ref{lem:quotient-profile}, the unique point
of the degree-$p$ quotient over infinity makes its decomposition group
transitive on the $p$ embeddings.  Because $k$ is algebraically closed,
decomposition equals inertia.  Its order is divisible by $p$; since
$v_p(p!)=1$, its Sylow $p$-subgroup has order $p$ and is generated by a
$p$-cycle, hence is a regular $P=C_p$.  The tame inertia quotient is cyclic,
and Schur--Zassenhaus gives a complement $H_g$.  This makes the normal
closure a connected pointed
$G_g$-tail in the precise sense of
\cite[Definition 2.9]{WewersThree}.

The map \eqref{eq:new-map} is \'etale away from infinity.  Its sole different
exponent is therefore determined by Riemann--Hurwitz:
\[
 2g-2=p(-2)+d_\infty,
\]
so $d_\infty=2g+2p-2$.  Lemma \ref{lem:local} gives
\[
 2g+2p-2=(p-1)\left(1+\frac{h_g}{m_g}\right).
\]
Solving this equality gives
$h_g/m_g=1+2g/(p-1)$.  Its faithful-character conclusion shows
$\gcd(h_g,m_g)=1$, so the reduced numerator and denominator are exactly the
integers in \eqref{eq:new-type}.  Finally \eqref{eq:genus-bound} gives
$1<\sigma_g<2$, while the
affine restriction is étale.
Therefore \cite[Definition 2.10]{WewersThree} calls this a special new tail.
Under the identification of the $p$ letters with $\mathbf F_p$, the regular
subgroup $P$ acts by translations.  Lemma \ref{lem:normalizer} identifies
the conjugation action of $H_g$ on $P$ with its multiplier in
$\mathbf F_p^\times$.  Its kernel would centralize $P$ and meet $P$
trivially, so the same lemma shows that the character is faithful.
\end{proof}

\section{The central logarithmic datum}

\begin{definition}[Cyclic special degeneration datum]
\label{def:cyclic-degeneration}
Let $m>1$ divide $p-1$, let
$\chi:C_m\hookrightarrow\mathbf F_p^\times$ be faithful, and let
$\tau_1,\ldots,\tau_r\in\mathbf P^1(k)$ be pairwise distinct.  Choose
integers
\[
 0<a_j<m,\qquad \sum_{j=1}^r a_j=m,
\]
and assume that
\[
 \gcd(m,a_1,\ldots,a_r)=1.
\]
Let $Z\to\mathbf P^1_k$ be the connected cyclic cover of degree $m$
with these Kummer exponents, the deck action being chosen so that the
Kummer coordinate has character $\chi$.  Put
\[
 c_j=\gcd(m,a_j),\qquad m_j=\frac{m}{c_j},\qquad
 \widetilde a_j=\frac{a_j}{c_j},
\]
and let $P_j$ be the reduced divisor above $\tau_j$.  Choose
$\nu_j\in\{0,1\}$ with $\sum_j\nu_j=r-3$.

A cyclic special degeneration datum is a nonzero element of
\[
 H^0(Z,\Omega^1_{Z/k})_\chi
\]
which, denoted by $\omega$, satisfies
\begin{align}
 \operatorname{div}(\omega)
 &=\sum_{j=1}^r(m_j\nu_j+\widetilde a_j-1)P_j,
 \label{eq:degeneration-divisor}\\
 \mathcal C(\omega)&=\omega,
 \label{eq:degeneration-cartier}
\end{align}
where $\mathcal C$ is the Cartier operator.
\end{definition}

The numerical, divisor, and Cartier conditions are those of
\cite[Definition~3.1 and equations~(29)--(30)]{WewersMeta}; the connectedness
condition needed for a deformation datum has been imposed explicitly.

\begin{lemma}[Kummer fibers and critical types]
\label{lem:kummer-critical-type}
In Definition \ref{def:cyclic-degeneration}, the fiber above $\tau_j$ has
exactly $c_j$ points, each of ramification index $m_j$.  If $Q$ is one of
these points, there are local parameters $t$ at $\tau_j$ and $s$ at $Q$
such that
\[
 t=s^{m_j},\qquad z_j=u_js^{\widetilde a_j},\qquad
 u_j\in k[[s]]^\times,
\]
where $z_j$ differs from a Kummer generator by a power of $t$ and a unit.
The critical type of a special degeneration datum at $\tau_j$ is
\[
 h_j=m_j\nu_j+\widetilde a_j,
 \qquad
 \sigma_j=\frac{h_j}{m_j}=\nu_j+\frac{a_j}{m}.
 \label{eq:kummer-critical-type}
\]
The stabilizer of $Q$ in $C_m$ is the unique subgroup of order $m_j$, and
the character at that stabilizer is the restriction of $\chi$.
\end{lemma}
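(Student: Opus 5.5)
The plan is to compute everything from the explicit Kummer model. Write $Z$ as the normalization of $\mathbf P^1_k$ in $k(x)(z)$, where
\[
 z^m=\prod_{j}(x-\tau_j)^{a_j},
\]
with the usual modification if some $\tau_j=\infty$. Since $\sum_j a_j=m$, infinity is unbranched when it is not among the $\tau_j$. Connectedness of $Z$ is the condition $\gcd(m,a_1,\ldots,a_r)=1$.

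The first step is local at $\tau_j$. Let $t$ be a local parameter at $\tau_j$. Then $z^m=t^{a_j}v$ with $v\in k[[t]]^\times$. Because $p\nmid m$, Hensel's lemma lets us absorb an $m$th root of $v$ into $z$. The local algebra $k((t))[z]/(z^m-t^{a_j})$ then splits into $c_j$ factors. On each factor, $w=z^{m_j}/t^{\widetilde a_j}$ is a $c_j$th root of unity. Each factor is the field obtained by adjoining a root of
\[
 y^{m_j}=\zeta t^{\widetilde a_j},\qquad \gcd(m_j,\widetilde a_j)=1.
\]
This extension is totally ramified of degree $m_j$. Concretely, choose integers $\alpha,\beta$ with $\alpha\widetilde a_j+\beta m_j=1$ and put $s=z^\alpha t^\beta$, adjusted by a unit root. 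Then $t=s^{m_j}$ and $z=u_js^{\widetilde a_j}$. This gives exactly $c_j$ points, each of index $m_j$. Since $C_m$ is abelian and acts transitively on the fiber, every stabilizer has order $m_j$, and so it is the unique subgroup of order $m_j$. The character of the stabilizer on $\omega$ is the restriction of $\chi$ because $h^*\omega=\chi(h)\omega$ holds globally.

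The critical type then comes from the definitions. By \eqref{eq:degeneration-divisor}, $\operatorname{ord}_Q(\omega)=m_j\nu_j+\widetilde a_j-1$ at every $Q\in P_j$. Definition \ref{def:deformation-datum} gives $m_{\tau_j}=|H_Q|=m_j$ and
\[
 h_j=\operatorname{ord}_Q(\omega)+1=m_j\nu_j+\widetilde a_j.
\]
Dividing by $m_j$ and using $\widetilde a_j/m_j=a_j/m$ gives $\sigma_j=\nu_j+a_j/m$. Lemma \ref{lem:deformation-invariants} guarantees independence of the choice of $Q$.

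The one point needing care is the consistency claim hidden in the normalization. An element $h$ of the stabilizer acts on $s$ by a primitive $m_j$th root of unity. Compatibility with $z=u_js^{\widetilde a_j}$ and with $h^*z=\chi(h)z$ forces this action to be $\chi(h)^{\widetilde a_j^{-1}}$ modulo $m_j$. I would verify this only to confirm that $Q$ is fixed exactly by the order-$m_j$ subgroup, which the orbit count already gives. No Cartier-operator input is needed: \eqref{eq:degeneration-cartier} plays no role in this lemma.
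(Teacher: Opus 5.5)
Your proposal is correct and follows essentially the same route as the paper. Both normalize the completed local Kummer equation by Hensel's lemma to $z_j^m=t^{a_j}$, split it into the $c_j$ factors $z_j^{m_j}=\zeta t^{\widetilde a_j}$ (each totally ramified of degree $m_j$ because $\gcd(m_j,\widetilde a_j)=1$), and then read $h_j$ and $\sigma_j$ off the divisor formula \eqref{eq:degeneration-divisor}. The differences are cosmetic: you write the parameter $s=z^\alpha t^\beta$ explicitly via B\'ezout and get the stabilizer order from transitivity and orbit--stabilizer, whereas the paper cites the formal-disk normalization and the equality of inertia order with ramification index.
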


\begin{proof}
Choose a parameter $t$ at $\tau_j$.  After multiplying the Kummer generator
by an integral power of $t$, its completed local equation is
\[
 z_j^m=t^{a_j}v(t),\qquad v(t)\in k[[t]]^\times.
\]
The integer $m$ is prime to $p$, and $k$ is algebraically closed.  Hensel's
lemma therefore supplies an $m$th root of $v(t)$; division by that root
reduces the equation to $z_j^m=t^{a_j}$.  Write
$m=c_jm_j$ and $a_j=c_j\widetilde a_j$.  Then
\[
 z_j^m-t^{a_j}
 =\prod_{\zeta^{c_j}=1}
   \bigl(z_j^{m_j}-\zeta t^{\widetilde a_j}\bigr).
\]
There are $c_j$ factors.  Since
$\gcd(m_j,\widetilde a_j)=1$, the normalization of each factor is one
formal disk, and after multiplying $z_j$ by a constant it has a parameter
$s$ satisfying $t=s^{m_j}$ and $z_j=s^{\widetilde a_j}$.  This proves the
fiber and ramification assertions.

The coefficient of $Q$ in \eqref{eq:degeneration-divisor} is
$m_j\nu_j+\widetilde a_j-1$.  Adding one and dividing by $m_j$ gives
\eqref{eq:kummer-critical-type}.  The stabilizer order equals the
ramification index because the residue field is algebraically closed.
The group $C_m$ is cyclic, so it has a unique subgroup of that order.  The
character statement follows by restricting the character of the deck
action.
\end{proof}

\begin{theorem}[Cartier identities and logarithmic criterion]
\label{thm:cartier-criterion}
Let $K_0$ be a function field of one variable over an algebraically closed
field of characteristic $p$.  For $f\in K_0$ and a rational differential $\eta$,
the Cartier operator satisfies
\[
 \mathcal C(f^p\eta)=f\,\mathcal C(\eta).
\]
Moreover,
\[
 \mathcal C(\eta)=\eta
 \quad\Longleftrightarrow\quad
 \eta=\frac{du}{u}\ \text{for some }u\in K_0^\times.
\]
The Cartier operator commutes with pullback by an isomorphism of smooth
curves over the ground field.
\end{theorem}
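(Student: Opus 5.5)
Three assertions have to be checked: the semilinearity $\mathcal C(f^p\eta)=f\,\mathcal C(\eta)$, the characterization of the fixed points of $\mathcal C$ as logarithmic differentials, and compatibility with pullback by isomorphisms.

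\emph{Definition and easy parts.} I will use the standard description of the Cartier operator via a separating variable. Choose $x\in K_0$ with $K_0/k(x)$ separable. Then $K_0=K_0^p(x)$, and every $f\in K_0$ can be written uniquely as
\[
 f=\sum_{i=0}^{p-1}f_i^p x^i .
\]
For $\eta=f\,dx$ one sets $\mathcal C(\eta)=f_{p-1}\,dx$. One first checks that this is independent of the choice of $x$. The cleanest route is the intrinsic characterization: $\mathcal C$ is the unique additive map that kills exact differentials, satisfies $\mathcal C(g^p\eta)=g\,\mathcal C(\eta)$, and sends $dg/g$ to $dg/g$. Granting this standard fact (for instance from Serre's or Tate's treatment), the first identity is built in: multiplying $f$ by $g^p$ multiplies each coefficient $f_i$ by $g$. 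Compatibility with pullback by an isomorphism $\phi$ follows at once. Indeed, $\phi^*$ is a field isomorphism commuting with $d$ and with $p$th powers, so it carries the expansion in $x$ to the expansion in $\phi^*x$, and independence of the separating variable finishes it.

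\emph{Logarithmic forms are fixed.} The identity $\mathcal C(du/u)=du/u$ is the key computation. Write
\[
 du/u=u^{-p}\,u^{p-1}\,du .
\]
By semilinearity, $\mathcal C(du/u)=u^{-1}\mathcal C(u^{p-1}du)$, so it suffices to show $\mathcal C(u^{p-1}du)=du$. Two cases arise.
\begin{itemize}
\item If $u\notin K_0^p$, use $u$ itself as separating variable. Then $u^{p-1}$ has $f_{p-1}=1$, so $\mathcal C(u^{p-1}du)=du$.
\item If $u\in K_0^p$, then $du=0$ and both sides vanish.
\end{itemize}

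\emph{Converse, the main obstacle.} Suppose $\mathcal C(\eta)=\eta\neq0$; I must produce $u$ with $\eta=du/u$. I would argue via the exact sequence
\[
 0\to K_0^{\times}/K_0^{\times p}\xrightarrow{\;d\log\;}\Omega^1_{K_0}\xrightarrow{\;\mathcal C-1\;}\Omega^1_{K_0}.
\]
Its exactness in the middle is the classical Cartier theorem. A direct proof goes as follows. Write $\eta=f\,dx$ and expand $f$ in the basis $x^i$. The condition $\mathcal C(\eta)=\eta$ reads $f_{p-1}=f$, i.e.
\[
 f=\sum_i f_{p-1}^{p}\,(\text{coefficients})\,x^i .
\]
Iterating this expresses $f$ through $f^{p^n}$-type data, and one reduces to the Jacobson differential criterion: $\eta=g\,dx$ is logarithmic iff
\[
 \partial^{p-1}g+g^p=0,\qquad \partial=d/dx .
\]
This follows from the formula $\mathcal C(g\,dx)^p=-\partial^{p-1}(g)\,dx^{\otimes p}$-style identity (Katz), namely
\[
 \bigl(\mathcal C(g\,dx)/dx\bigr)^p=-\partial^{p-1}g .
\]
Fixedness then gives exactly the Jacobson equation, and Jacobson's theorem (logarithmic derivative characterization for $p$-closed derivations) yields $u$. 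Since this direction is a classical theorem, in the write-up I would simply cite Cartier's theorem (e.g. Serre, ``Sur la topologie des vari\'et\'es alg\'ebriques en caract\'eristique $p$'', or Katz) rather than reprove Jacobson's criterion, while giving the elementary arguments above for the remaining assertions.
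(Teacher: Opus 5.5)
Your proposal is correct and follows essentially the paper's route. Both treat the well-definedness of $\mathcal C$ (equivalently, its characterization by additivity, $p^{-1}$-semilinearity, vanishing on exact forms and fixing $dg/g$) and Cartier's theorem $\ker(\mathcal C-1)=d\log(K_0^\times)$ as cited inputs, and both derive compatibility with isomorphisms from that intrinsic characterization; your separating-variable computations of semilinearity and of $\mathcal C(du/u)=du/u$ just unpack what the paper cites from Wewers. The garbled ``iterating'' display in your converse is unnecessary, because the formula $\bigl(\mathcal C(g\,dx)/dx\bigr)^p=-\partial^{p-1}g$ alone turns fixedness into Jacobson's equation $\partial^{p-1}g+g^p=0$ (and the omitted case $\eta=0$ is just $u=1$).
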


\begin{proof}
The semilinearity formula and the identities
$\mathcal C(dv)=0$ and $\mathcal C(dv/v)=dv/v$ are recorded in
\cite[equations~(40)--(42)]{WewersMeta}.  The logarithmic Cartier criterion
is the statement immediately preceding
\cite[Definition~3.1]{WewersMeta}; equivalently, it is the equality
\[
 \ker(\mathcal C-1: \Omega^1_{K_0}\longrightarrow\Omega^1_{K_0})
   =d\log(K_0^\times).
\]
Finally, if $\varphi:K_0\xrightarrow{\sim}K_1$ is induced by an isomorphism
of smooth curves, applying $\varphi^*$ to the three displayed Cartier
identities characterizes the Cartier operator on $K_1$.  Hence
$\varphi^*\mathcal C=\mathcal C\varphi^*$.
\end{proof}

\begin{lemma}[From degeneration data to deformation data]
\label{lem:degeneration-to-deformation}
A cyclic special degeneration datum is a logarithmic special deformation
datum.  Its primitive critical points are precisely the three points with
$\nu_j=0$, and its new critical points are precisely the points with
$\nu_j=1$.
\end{lemma}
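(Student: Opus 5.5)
We verify the axioms of Definitions \ref{def:deformation-datum} and \ref{def:special-datum} one at a time, taking a cyclic special degeneration datum $\omega$ on $Z\to\mathbf P^1_k$ as in Definition \ref{def:cyclic-degeneration}.

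\emph{Deformation datum of type $(C_m,\chi)$.} By construction $Z\to\mathbf P^1_k$ is a connected cyclic cover of degree $m$. Since $m\mid p-1$, we have $p\nmid m$, so the cover is tame and $H=C_m$ has order prime to $p$. The equivariance $h^*\omega=\chi(h)\omega$ is exactly the condition $\omega\in H^0(Z,\Omega^1_{Z/k})_\chi$, once the $\chi$-eigenspace convention of the deck action is fixed as in the definition. The differential is nonzero by hypothesis.

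\emph{Logarithmic.} The condition $\mathcal C(\omega)=\omega$ in \eqref{eq:degeneration-cartier} gives $\omega=du/u$ for some $u\in k(Z)^\times$, by Theorem \ref{thm:cartier-criterion} applied with $K_0=k(Z)$.

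\emph{Critical invariants.} The divisor formula \eqref{eq:degeneration-divisor} is supported only on the fibers $P_j$ above the $\tau_j$. Over any other point $\tau$ the cover is unramified, since the Kummer exponents are supported on the $\tau_j$. There $\omega$ has neither zero nor pole, so $(m_\tau,h_\tau)=(1,1)$ and $\tau$ is not critical.

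At $\tau_j$, Lemma \ref{lem:kummer-critical-type} gives $\sigma_j=\nu_j+a_j/m$ with $0<a_j/m<1$. This yields the following:
\begin{itemize}
\item if $\nu_j=0$, then $0<\sigma_j<1$;
\item if $\nu_j=1$, then $1<\sigma_j<2$.
\end{itemize}
In particular $\sigma_j\ne 0,1$ and $(m_j,h_j)\ne(1,1)$, so every $\tau_j$ is critical. No critical point is wild, and all satisfy $\sigma_j<2$.

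\emph{Counting primitive points.} We check this against Lemma \ref{lem:signature-identity} with $g_X=0$. Using $\sum_j a_j=m$ and $\sum_j\nu_j=r-3$,
\[
 \sum_j(\sigma_j-1)=(r-3)+1-r=-2 .
\]
So the numerology is consistent. The number of indices with $\nu_j=0$ is $r-(r-3)=3$, because each $\nu_j\in\{0,1\}$. Hence exactly three critical points have $\sigma<1$, and these are precisely the primitive ones; the points with $\nu_j=1$ are the new ones. This verifies Definition \ref{def:special-datum}.

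Normalization, meaning that the three points with $\nu_j=0$ sit at $0,1,\infty$, is not part of the claim. If it is needed later, it can be arranged by a Möbius transformation. Invariance under such a change of coordinates follows because pullback commutes with $\mathcal C$ by Theorem \ref{thm:cartier-criterion}.

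The only delicate point is the sign convention in the character: that the Kummer coordinate having character $\chi$ makes the eigenspace $H^0(\Omega^1)_\chi$ match $h^*\omega=\chi(h)\omega$. This is a matter of fixing conventions consistently, not a genuine obstacle.
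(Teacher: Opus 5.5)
Your proof is correct and follows essentially the same route as the paper. The Cartier criterion (Theorem~\ref{thm:cartier-criterion}) gives logarithmicity, Lemma~\ref{lem:kummer-critical-type} gives $\sigma_j=\nu_j+a_j/m\in(0,1)\cup(1,2)$, the constraint $\sum_j\nu_j=r-3$ with $\nu_j\in\{0,1\}$ leaves exactly three primitive points, and the divisor formula shows every other point has type $(1,1)$. Your explicit check of tameness and equivariance is something the paper leaves implicit, and your signature-identity computation is a consistency check that the argument does not need.
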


\begin{proof}
Theorem \ref{thm:cartier-criterion} and
\eqref{eq:degeneration-cartier} make the datum logarithmic.  Lemma
\ref{lem:kummer-critical-type} gives
$\sigma_j=\nu_j+a_j/m$.  Since $0<a_j<m$, this invariant lies in $(0,1)$
when $\nu_j=0$ and in $(1,2)$ when $\nu_j=1$.  Because every $\nu_j$ is
zero or one and their sum is $r-3$, exactly three are zero.  Away from the
$\tau_j$, the cover is unramified and \eqref{eq:degeneration-divisor} gives
order zero for $\omega$, so the type is $(1,1)$ and the point is not
critical.  These facts verify Definition \ref{def:special-datum}.
\end{proof}

\begin{theorem}[Wewers's cyclic-cover calculations]
\label{thm:wewers-cyclic}
Let $m>1$ divide $p-1$, let
$\chi:C_m\hookrightarrow\mathbf F_p^\times$ be faithful, and let
$0<a_j<m$ satisfy $\sum_j a_j=m$ and
$\gcd(m,a_1,\ldots,a_r)=1$.
\begin{enumerate}
\item For every ordered tuple of pairwise distinct points
$\tau_1,\ldots,\tau_r$, let $Z$ be the associated connected cyclic cover.
Then
\[
 \dim_kH^0(Z,\Omega^1_{Z/k})_\chi=r-2,
\]
and the Cartier operator is bijective on this eigenspace.
\item Suppose $r=4$.  Fix $\nu_j\in\{0,1\}$ with
$\sum_j\nu_j=1$, reorder the indices so that $\nu_1=1$, and put
$\alpha=(p-1)/m$.  Let the ordered branch points and the differential vary.
Modulo $C_m$-equivariant isomorphism and multiplication of the differential
by an element of $\mathbf F_p^\times$, there are exactly
$\alpha a_1-1$ cyclic special degeneration data with these fixed integers
and this fixed character.
\end{enumerate}
\end{theorem}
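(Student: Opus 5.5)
Part (1) reduces to a Riemann--Roch count on the $\chi$-eigenspace together with the semilinearity of the Cartier operator. Part (2) reduces to counting $\mathbf F_p$-rational points on a one-parameter family. Both parts reproduce Wewers's computations in \cite{WewersMeta}, and I would organize the argument around the explicit basis of eigendifferentials.

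For (1), write $Z$ as the normalization of $z^m=\prod_j(x-\tau_j)^{a_j}$, after moving one branch point to $\infty$ if necessary. The character $\chi$ is realized by $z^{-1}$, or by $z^{-(m-1)}$, depending on the convention for how the deck group acts on differentials. The $\chi$-eigenspace of $H^0(Z,\Omega^1)$ then consists of differentials $g(x)\,dx/z'$, where $z'=\prod_j(x-\tau_j)^{a'_j}/z^{\,\cdot}$ is a suitable Kummer generator with exponents $a'_j$. The step is to choose the generator so that the relevant exponents are $m-a_j$, or equivalently $a_j$ under the convention that the sum equals $m$.

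Lemma~\ref{lem:kummer-critical-type} gives the local parametrization above each $\tau_j$. From it, regularity of $g\,dx/z$ at the points above $\tau_j$ becomes a condition $\operatorname{ord}_{\tau_j}$ on $g$ determined by $\lceil$-type inequalities in $a_j/m$. Since $0<a_j<m$, each such bound is vacuous at finite points. At $\infty$, the condition $\sum a_j=m$ forces $\deg g\le r-3$, using $\operatorname{ord}_\infty(dx)=-2$ and $\operatorname{ord}_\infty(z)=-(\sum a_j)/m=-1$. Hence the eigenspace is the space of polynomials of degree at most $r-3$, which has dimension $r-2$. This agrees with the Chevalley--Weil formula $-1+\sum_j\langle a_j/m\rangle$, read in the appropriate normalization.

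Bijectivity of $\mathcal C$ follows from Theorem~\ref{thm:cartier-criterion}. Since $\chi$ is $\mathbf F_p^\times$-valued, $\mathcal C$ preserves the $\chi$-eigenspace, because it commutes with pullback and is $p^{-1}$-linear on scalars in $\mathbf F_p$. Put $\alpha=(p-1)/m$ and write $g\,dx/z=g\,z^{p-1}\cdot dx/z^p=z^{-p}\,g\,\Pi^{\alpha}\,dx$, where $\Pi=\prod_j(x-\tau_j)^{a_j}$. Semilinearity then gives $\mathcal C(g\,dx/z)=z^{-1}\mathcal C(g\Pi^\alpha dx)$. This expresses $\mathcal C$ by a Hasse--Witt-type matrix whose entries are coefficients of $x^{pi-1-j'}$ in $x^{j}\Pi^\alpha$. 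The step I expect to be the main obstacle is proving that this matrix is invertible for every configuration of distinct $\tau_j$. I would try the route suggested by Wewers: show that $\mathcal C$ is injective on the eigenspace by a valuation argument at $\infty$. A nonzero $\omega$ in the kernel would have to be exact, $\omega=dv$ with $v$ in the $\chi^{-1}$... eigenspace of functions. The pole orders available at the points above $\infty$ and the $\tau_j$ should then contradict $\deg\le r-3$. Failing that, I would fall back on citing \cite{WewersMeta} directly.

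For (2), take $r=4$ with $\nu_1=1$. Normalize the three primitive points $\tau_2,\tau_3,\tau_4$ to $0,1,\infty$ and leave $\lambda=\tau_1$ free. By (1), the eigenspace is $2$-dimensional. The divisor condition \eqref{eq:degeneration-divisor} asks that $\omega$ vanish to order $m_1+\widetilde a_1-1$ above $\tau_1$, which amounts to a simple zero of $g$ at $\lambda$. This pins $\omega$ down as $(x-\lambda)\,dx/z$ up to a scalar. The Cartier condition $\mathcal C(\omega)=\omega$, after using the $\mathbf F_p^\times$-scaling freedom, becomes $\mathcal C(\omega)=c\,\omega$. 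Via the computation above, this is a single polynomial equation $F(\lambda)=0$, whose terms are coefficients of $(x-\lambda)\Pi_\lambda^\alpha$. The remaining tasks are to show that $F$ has degree $\alpha a_1-1$ in $\lambda$, reading off the top coefficient from the $x^{\alpha a_1}$-behaviour, and that its roots are simple and avoid $\{0,1\}$. For simplicity of the roots I would use that $F$ satisfies a hypergeometric-type differential equation, as in Wewers's count.
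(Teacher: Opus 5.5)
The paper's proof is only a citation. Part (1) is Lemma~1.4(ii),(iv) of \cite{WewersMeta}, transported by a coordinate change that puts no $\tau_j$ at $\infty$, using that pullback commutes with $\mathcal C$. Part (2) is Proposition~3.8 there. Your fallback to citation is therefore the paper's route. The independent argument you sketch does not work as written, and there are three problems.

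\emph{Wrong eigenspace.} Take $z^m=\prod_j(x-\tau_j)^{a_j}$ with all $\tau_j$ finite and $\sum_j a_j=m$. Then the points over $\infty$ are unramified and $z$ has order $-1$ there. So $g\,dx/z$ has order $-\deg g-1$ at those points and is regular only if $g=0$. That is the $\chi^{-1}$-eigenspace in the paper's convention $h^*z=\chi(h)z$. Your own Chevalley--Weil expression $-1+\sum_j\langle a_j/m\rangle$ equals $0$, not $r-2$. The characters of $z^{-1}$ and $z^{-(m-1)}$ are inverse to each other; they are not two conventions for one space. The $\chi$-eigenspace is $\{g\,z\,dx/\prod_j(x-\tau_j):\deg g\le r-3\}$, of dimension $-1+\sum_j\langle -a_j/m\rangle=r-2$; compare the paper's $\omega=z\,dx/(x(x-1))$. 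Two consequences follow. Your Hasse--Witt matrix must be built from $g\prod_j(x-\tau_j)^{p-1-\alpha a_j}$, not from $g\,\Pi^\alpha$. In Part (2) the normalized differential is $c\,z\,dx/(x(x-1))$, not $(x-\lambda)\,dx/z$.

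\emph{Bijectivity of $\mathcal C$.} It does not follow from Theorem~\ref{thm:cartier-criterion}, and no pole-order count at $\infty$ and the $\tau_j$ can give it. Suppose $\omega=dv$ with $v=fz$, $f\in k(x)$. The leading coefficient of $dv$ at $\tau_j$ carries the factor $\operatorname{ord}_{\tau_j}(f)+a_j/m$, and at $\infty$ it carries the factor $\deg f+1$. Both can vanish in $\mathbf F_p$, so locally $v$ may have poles (of order divisible by $p$) while $dv$ stays regular. The missing idea is a global vanishing:
\begin{itemize}
\item $\ker(\mathcal C|_{H^0(Z,\Omega^1)})=H^0(Z,B\Omega^1)$.
\item The sequence $0\to\mathcal O_Z\xrightarrow{F}\mathcal O_Z\xrightarrow{d}B\Omega^1\to0$ embeds this space $p^{-1}$-linearly and $C_m$-equivariantly into $H^1(Z,\mathcal O_Z)$.
\item Since $\chi$ is $\mathbf F_p^\times$-valued, the $\chi$-part lands in $H^1(Z,\mathcal O_Z)_\chi$, which is Serre dual to $H^0(Z,\Omega^1)_{\chi^{-1}}=0$.
\end{itemize}
Hence $\mathcal C$ is injective on the $\chi$-eigenspace, and a $p^{-1}$-linear injection of a finite-dimensional space is bijective.

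\emph{The count in Part (2).} Your reduction of $\mathcal C(\omega)=\omega$ to the eigenvector condition $\mathcal C(\omega_0)\in k^\times\omega_0$ is sound. This gives a single equation $F(\lambda)=0$, after which Lemma~\ref{lem:cartier-rescaling} leaves an $\mathbf F_p^\times$-torsor of normalizations. However, what Proposition~3.8 of \cite{WewersMeta} actually proves is left as a to-do: that $F$ has exactly $\alpha a_1-1$ distinct roots, none equal to $0$ or $1$. So that part of your proposal still rests entirely on the citation, as in the paper.
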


\begin{proof}
For Part (1), choose a projective coordinate change carrying none of the
$\tau_j$ to infinity.  In that affine coordinate, the assertion is
Lemma~1.4(ii),(iv) of Wewers \cite{WewersMeta}.  Pullback by the coordinate change
identifies the connected cyclic covers and their $\chi$-eigenspaces, and
Theorem \ref{thm:cartier-criterion} shows that it also identifies their
Cartier operators.  This proves the stated coordinate-free form.

The count in Part (2), with the faithful character fixed, is
\cite[Proposition~3.8]{WewersMeta}.  The equivalence relation is the one in
the opening paragraph of \cite[Section~3.5]{WewersMeta}.  After fixing
three ordered branch points at $0,1,\infty$,
the fourth is a variable cross-ratio; it is not fixed in the count.
\end{proof}

\begin{lemma}[Cartier rescaling]\label{lem:cartier-rescaling}
Let $L$ be a one-dimensional $k$-space of differentials preserved
bijectively by Cartier.  For every nonzero $\omega\in L$, there is
$\lambda\in k^\times$ such that
$\mathcal C(\lambda\omega)=\lambda\omega$.
\end{lemma}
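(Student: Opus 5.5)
The plan is to use that $\mathcal C$ is $p^{-1}$-linear, which follows from the semilinearity identity in Theorem~\ref{thm:cartier-criterion}: for $\lambda\in k$, write $\lambda=\mu^p$ with $\mu\in k$ (possible since $k$ is algebraically closed, hence perfect). Then
\[
 \mathcal C(\lambda\omega)=\mathcal C(\mu^p\omega)=\mu\,\mathcal C(\omega)=\lambda^{1/p}\mathcal C(\omega).
\]
Since $L$ is one-dimensional and preserved by $\mathcal C$, there is a scalar $\beta\in k$ with $\mathcal C(\omega)=\beta\omega$. Bijectivity of $\mathcal C$ on $L$ gives $\mathcal C(\omega)\neq0$, so $\beta\in k^\times$.

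Next I would translate the desired fixed-point equation into a scalar equation. We want
\[
 \mathcal C(\lambda\omega)=\lambda^{1/p}\beta\omega=\lambda\omega,
\]
which, for $\lambda\neq 0$, is equivalent to $\lambda^{1/p}\beta=\lambda$. Writing $\lambda=\mu^p$, this becomes $\mu\beta=\mu^p$, that is, $\mu^{p-1}=\beta$.

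Because $k$ is algebraically closed and $\beta\neq0$, the polynomial $T^{p-1}-\beta$ has a root $\mu\in k^\times$. Setting $\lambda=\mu^p\in k^\times$ then gives $\mathcal C(\lambda\omega)=\lambda\omega$. As a side remark, the set of all solutions is a torsor under $\mathbf F_p^\times$, which matches the normalization used in Theorem~\ref{thm:wewers-cyclic}(2).

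I expect no real obstacle here. The only point needing care is the direction of the semilinearity: the identity is $\mathcal C(f^p\eta)=f\,\mathcal C(\eta)$, so a scalar enters as a $p$th root and not as a $p$th power. That is exactly why the equation to solve is $\mu^{p-1}=\beta$ rather than $\lambda^{p-1}=\beta$, and either form is solvable over an algebraically closed field.
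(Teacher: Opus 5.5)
Your proposal is correct and is the same argument as the paper's: both write $\mathcal C(\omega)=c\omega$ with $c\neq0$, use $\mathcal C(\rho^p\omega)=\rho\,\mathcal C(\omega)$, and reduce the problem to solving $\rho^{p-1}=c$ with $\lambda=\rho^p$. The paper instead first chooses $\lambda$ with $\lambda^{p-1}=c^p$ and then recovers $\rho^{p-1}=c$ from injectivity of Frobenius, so choosing $\mu$ directly with $\mu^{p-1}=\beta$, as you do, is a slightly cleaner ordering of the same steps.
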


\begin{proof}
Write $\mathcal C(\omega)=c\omega$ with $c\in k^\times$.  Choose
$\lambda\in k^\times$ with $\lambda^{p-1}=c^p$, and choose
$\rho\in k^\times$ with $\rho^p=\lambda$.  The $p^{-1}$-semilinearity of
Cartier gives
\[
 \mathcal C(\lambda\omega)
 =\mathcal C(\rho^p\omega)=\rho\mathcal C(\omega)=\rho c\omega.
\]
The scalar equation and $\lambda=\rho^p$ imply
$(\rho^{p-1})^p=c^p$.  Frobenius is injective on the field $k$, so
$\rho^{p-1}=c$ and $\rho c=\rho^p=\lambda$.  This proves the assertion.
\end{proof}

Put
\[
 \sigma_i=\frac{r_i-1}{p-1}.
\]
Equation \eqref{eq:three-RH} becomes
\[
 \sigma_1+\sigma_2+\sigma_3=1-\frac{2g}{p-1}.
 \label{eq:signature-sum}
\]

Put $A_i=r_i-1$ for $1\leq i\leq3$.  If $g=0$, set
$J=\{1,2,3\}$.  If $g>0$, set $A_4=2g$ and
$J=\{1,2,3,4\}$.  In both cases define
\begin{equation}\label{eq:central-arithmetic}
 D=\gcd\bigl(p-1,(A_j)_{j\in J}\bigr),\qquad
 M=\frac{p-1}{D},\qquad a_j=\frac{A_j}{D}.
\end{equation}
Define $\nu_i=0$ for $1\leq i\leq3$, and define $\nu_4=1$ when $g>0$.

\begin{lemma}[Arithmetic of the central exponents]
\label{lem:central-arithmetic}
The preceding integers satisfy
\begin{gather}
 M>1,\qquad 0<a_j<M\quad(j\in J),\\
 \sum_{j\in J}a_j=M,\qquad
 \gcd\bigl(M,(a_j)_{j\in J}\bigr)=1.
 \label{eq:central-bounds}
\end{gather}
For $j\in J$, put
\[
 c_j=\gcd(M,a_j),\qquad m_j=\frac{M}{c_j},\qquad
 \widetilde a_j=\frac{a_j}{c_j},\qquad
 h_j=m_j\nu_j+\widetilde a_j.
\]
Then
\begin{gather}
 \gcd(m_j,\widetilde a_j)=1,\qquad
 \frac{h_j}{m_j}=\nu_j+\frac{A_j}{p-1},
 \label{eq:central-fractions}\\
 \operatorname{lcm}_{j\in J}m_j=M.
 \label{eq:central-lcm}
\end{gather}
For $1\leq i\leq3$ one has
\begin{equation}\label{eq:central-primitive-type}
 (m_i,h_i)=
 \left(
  \frac{p-1}{\gcd(p-1,r_i-1)},
  \frac{r_i-1}{\gcd(p-1,r_i-1)}
 \right).
\end{equation}
If $g>0$, then
\begin{equation}\label{eq:central-new-type}
 (m_4,h_4)=
 \left(
  \frac{p-1}{\gcd(p-1,2g)},
  \frac{p-1+2g}{\gcd(p-1,2g)}
 \right).
\end{equation}
\end{lemma}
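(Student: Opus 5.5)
The argument is elementary arithmetic from \eqref{eq:three-RH}, the bound $2\le r_i\le p-1$, and \eqref{eq:genus-bound}. We carry it out in the order of the statement.

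First, the sum. By \eqref{eq:three-RH},
\[
 \sum_{i\le3}A_i=r_1+r_2+r_3-3=p-1-2g.
\]
Hence $\sum_{j\in J}A_j=p-1$ in both cases: if $g=0$ the three terms already sum to $p-1$, and if $g>0$ we add $A_4=2g$. Since $D$ divides $p-1$ and every $A_j$, dividing by $D$ gives $\sum_j a_j=M$.

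Second, the bounds and the gcd. Because $r_i\ge2$, we have $A_i\ge1$, and $A_4=2g>0$ when $g>0$. Thus every $a_j>0$. Since $J$ has at least two elements and all $a_j$ are positive with sum $M$, each $a_j<M$; in particular $M\ge2$, so $M>1$. Next, $\gcd(M,(a_j))=\gcd(p-1,(A_j))/D=1$ by the definition of $D$.

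Third, the fractions. Put $c_j=\gcd(M,a_j)$. Then $\gcd(m_j,\widetilde a_j)=1$ is the standard coprimality of the quotients by a gcd. We compute
\[
 \frac{\widetilde a_j}{m_j}=\frac{a_j}{M}=\frac{A_j}{p-1},
\]
so $h_j/m_j=\nu_j+A_j/(p-1)$. Moreover $\gcd(h_j,m_j)=\gcd(\widetilde a_j,m_j)=1$, because $h_j\equiv\widetilde a_j\pmod{m_j}$. So $(m_j,h_j)$ is the reduced form of $\nu_j+A_j/(p-1)$. Reducing $A_i/(p-1)$ directly for $i\le3$ gives \eqref{eq:central-primitive-type}. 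For $j=4$, the fraction is $(p-1+2g)/(p-1)$; since $\gcd(p-1+2g,p-1)=\gcd(2g,p-1)$, reducing it gives \eqref{eq:central-new-type}. We use uniqueness of reduced fractions with positive denominators throughout.

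Fourth, the lcm, which is the only step needing a small argument. Each $m_j$ divides $M$, so $L:=\operatorname{lcm}_j m_j$ divides $M$. For the converse, write $a_j/M=\widetilde a_j/m_j$, so that $a_j\cdot(L/M)=\widetilde a_j\cdot(L/m_j)$ is an integer for every $j$. Hence $M/\gcd(M,L)$ divides every $a_j$, and $M/L$ divides $\gcd(M,(a_j))=1$. So $L=M$.
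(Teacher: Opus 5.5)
Your proof is correct and follows essentially the same elementary route as the paper: you verify the sum, the bounds, the gcd condition, the reduced fractions, and the lcm in the same order. The differences are local and harmless: you get $a_j<M$ from positivity and $\sum_j a_j=M$, so you never actually need $r_i\le p-1$ or \eqref{eq:genus-bound}; you identify $(m_j,h_j)$ by uniqueness of reduced fractions instead of the paper's computation $\gcd(p-1,A_j)=Dc_j$; and you prove $\operatorname{lcm}_j m_j=M$ by showing that $M/L$ divides $\gcd(M,(a_j)_{j\in J})=1$, rather than by comparing $\ell$-adic valuations prime by prime.
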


\begin{proof}
Equation \eqref{eq:three-RH} gives
\[
 \sum_{i=1}^{3}A_i
 =\sum_{i=1}^{3}(r_i-1)=p-1-2g.
\]
For $g=0$ this is $\sum_{j\in J}A_j=p-1$; for $g>0$ the additional
definition $A_4=2g$ gives the same equality.  Since $A_j=Da_j$ and
$p-1=DM$, division by $D$ gives $\sum_j a_j=M$.

For $1\leq i\leq3$, the inequalities $2\leq r_i\leq p-1$ give
$0<A_i<p-1$.  If $g>0$, then \eqref{eq:genus-bound} gives
$0<A_4=2g\leq p-5<p-1$.  Division by the positive integer $D$ yields
$0<a_j<M$, and hence $M>1$.

Let $q=\gcd(M,(a_j)_{j\in J})$.  Then $Dq$ divides $DM=p-1$ and every
$Da_j=A_j$.  The definition of $D$ implies $Dq\mid D$, so $q=1$.  This
proves \eqref{eq:central-bounds}.  Moreover,
\[
 \gcd(p-1,A_j)=\gcd(DM,Da_j)=D\gcd(M,a_j)=Dc_j.
\]
Dividing this equality into $p-1$ and $A_j$ gives
\[
 m_j=\frac{p-1}{\gcd(p-1,A_j)},\qquad
 \widetilde a_j=\frac{A_j}{\gcd(p-1,A_j)}.
\]
These fractions are reduced, which proves
\eqref{eq:central-fractions}, \eqref{eq:central-primitive-type}, and
\eqref{eq:central-new-type}.

It remains to prove \eqref{eq:central-lcm}.  Fix a prime $\ell$ and write
$b=v_\ell(M)$.  Then
\[
 v_\ell(m_j)=b-\min\{b,v_\ell(a_j)\}.
\]
Taking the maximum over $j$ gives
\begin{align*}
 v_\ell\left(\operatorname{lcm}_{j\in J}m_j\right)
 &=b-\min\bigl\{b,\min_{j\in J}v_\ell(a_j)\bigr\}\\
 &=b-v_\ell\bigl(\gcd(M,(a_j)_{j\in J})\bigr)=b.
\end{align*}
Thus the least common multiple and $M$ have the same valuation at every
prime and are equal.
\end{proof}

Fix a regular $p$-cycle subgroup $P\leq S_p$.  Let $H_0$ be the unique
multiplier subgroup of $\mathbf F_p^\times$ of order $M$, regarded as the
standard complement in
$N_{S_p}(P)=P\rtimes\mathbf F_p^\times$.  Let
\[
 \chi_0:H_0\hookrightarrow\operatorname{Aut}(P)=\mathbf F_p^\times
\]
be its actual conjugation character.

\subsection{Genus zero}
Assume $g=0$.  Let $Z_0$ be the smooth projective normalization of
\begin{equation}\label{eq:genus-zero-kummer}
 z^M=x^{a_1}(x-1)^{a_2},
\end{equation}
and let $H_0$ act by fixing $x$ and by
$h^*z=\chi_0(h)z$.

\begin{lemma}\label{lem:genus-zero-connected}
The curve $Z_0$ is connected and the map $Z_0\to\mathbf P^1_k$ has degree
$M$.
\end{lemma}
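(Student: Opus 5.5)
We need to show that $Z_0$ is connected and that $Z_0\to\mathbf P^1_k$ has degree $M$. Both reduce to the irreducibility of the polynomial $z^M-x^{a_1}(x-1)^{a_2}$ over the rational function field $k(x)$. We will prove this with the standard Kummer criterion. Since $M\mid p-1$, the integer $M$ is prime to $p$, and $k$ contains all $M$th roots of unity. In this situation $z^M-F$ is irreducible over $k(x)$ exactly when $F$ is not an $\ell$th power in $k(x)$ for any prime $\ell\mid M$. (The extra condition $-4F\notin k(x)^4$ from Capelli's theorem is automatic, since $-4\in k^{\times4}$.)

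Suppose $F=x^{a_1}(x-1)^{a_2}=G^\ell$ with $G\in k(x)$ and $\ell$ a prime dividing $M$. Comparing valuations at $x=0$, at $x=1$, and at infinity gives $\ell\mid a_1$, $\ell\mid a_2$, and $\ell\mid a_1+a_2$. In genus zero we have $J=\{1,2,3\}$, and Lemma~\ref{lem:central-arithmetic} gives $a_1+a_2+a_3=M$. Together with $\ell\mid M$, this forces $\ell\mid a_3$. Hence $\ell$ divides $\gcd(M,a_1,a_2,a_3)=1$, which is a contradiction. So the polynomial is irreducible.

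Irreducibility means the function field $k(x)[z]/(z^M-F)$ is a field of degree $M$ over $k(x)$. The smooth projective model of a function field is irreducible, hence connected. Its degree over $\mathbf P^1_k$ is the field degree $M$.

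Equivalently, the Kummer class of $F$ in $k(x)^\times/k(x)^{\times M}$ has exact order $M$. This is the only step with content; the arithmetic input is just the gcd condition of Lemma~\ref{lem:central-arithmetic}, now with the exponent $a_3$ at infinity included. The remaining points are that the smooth projective normalization of this field is the curve $Z_0$, and that the $H_0$-action $h^*z=\chi_0(h)z$ is well defined because $\chi_0(h)^M=1$. Neither presents an obstacle.
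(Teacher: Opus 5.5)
Your proposal is correct and is essentially the paper's proof. You show that $F=x^{a_1}(x-1)^{a_2}$ is not an $\ell$th power for any prime $\ell\mid M$ by reading off the valuations at $0,1,\infty$ against $\gcd(M,a_1,a_2,a_3)=1$, then apply the Kummer irreducibility criterion to get a degree-$M$ field extension with connected smooth projective model; your side remarks on the Capelli condition and the $H_0$-action are correct but inessential.
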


\begin{proof}
Put $F=x^{a_1}(x-1)^{a_2}$.  If $F$ were an $\ell$th power in $k(x)$ for a
prime $\ell\mid M$, every valuation of $F$ would be divisible by $\ell$.
The valuations at $0$, $1$, and infinity are
\[
 a_1,\qquad a_2,\qquad -(a_1+a_2)=a_3-M.
\]
Thus $\ell$ would divide $M,a_1,a_2,a_3$, contradicting
\eqref{eq:central-bounds}.  The Kummer irreducibility criterion therefore
shows that $T^M-F$ is irreducible over $k(x)$.  Its function-field extension
has degree $M$, and its smooth projective normalization is connected.
\end{proof}

Define
\begin{equation}\label{eq:genus-zero-form}
 \omega=z\frac{dx}{x(x-1)}.
\end{equation}

\begin{lemma}[Divisor of the genus-zero differential]
\label{lem:genus-zero-divisor}
If $P_i$ is the reduced fiber above $0$, $1$, or infinity according as
$i=1,2,3$, then
\[
 \operatorname{div}(\omega)
 =\sum_{i=1}^3(\widetilde a_i-1)P_i.
 \label{eq:genus-zero-divisor}
\]
The differential is nonzero, regular, and belongs to the
$\chi_0$-eigenspace.
\end{lemma}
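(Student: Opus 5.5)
The plan is to compute the divisor locally, using the Kummer local parameters supplied by Lemma~\ref{lem:kummer-critical-type}. Here $Z_0$ is the cyclic cover of Definition~\ref{def:cyclic-degeneration} with $r=3$, $m=M$, branch points $0,1,\infty$, and exponents $a_1,a_2,a_3$. Lemma~\ref{lem:genus-zero-connected} makes it connected, and \eqref{eq:central-bounds} supplies the numerical hypotheses. The proof then has three parts: the $\chi_0$-eigenspace property, the local orders at points not over the three branch values, and the local orders at the three fibers.

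For the eigenspace property, note that $H_0$ fixes $x$, so it fixes $dx/(x(x-1))$. Since $h^*z=\chi_0(h)z$, we get $h^*\omega=\chi_0(h)\omega$. Nonvanishing is immediate because $z\ne0$ in $k(Z_0)$.

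At a point above $\tau\in\mathbf A^1-\{0,1\}$, the cover is étale, so $x-\tau$ is a local parameter. The function $z$ is a unit there, since $z^M=F$ is a unit. The factor $x(x-1)$ is also a unit, so $\omega$ has order $0$.

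Above $0$, take $t=x$. The local equation is $z^M=t^{a_1}\cdot(\text{unit})$, and Lemma~\ref{lem:kummer-critical-type} applies directly with $z_1=z$. It gives a parameter $s$ with $t=s^{m_1}$ and $\operatorname{ord}_Q(z)=\widetilde a_1$. Then $dx=m_1s^{m_1-1}ds$ with $m_1$ prime to $p$, so $dx/x$ has order $-1$. Hence
\[
 \operatorname{ord}_Q(\omega)=\widetilde a_1-1 .
\]
The same computation above $1$, with $t=x-1$, gives $\widetilde a_2-1$.

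The fiber above infinity is the only step needing care, because there $z$ itself is not the normalized Kummer generator. Take $t=1/x$. Then
\[
 z^M=t^{-(a_1+a_2)}\cdot(\text{unit})=t^{a_3-M}\cdot(\text{unit}).
\]
The normalized generator is therefore $z_3=tz$, which satisfies $z_3^M=t^{a_3}\cdot(\text{unit})$. This is exactly the "differs by a power of $t$" clause in Lemma~\ref{lem:kummer-critical-type}, so $\operatorname{ord}_Q(z_3)=\widetilde a_3$. Since $\operatorname{ord}_Q(t)=m_3$, it follows that $\operatorname{ord}_Q(z)=\widetilde a_3-m_3$.

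Next, $dx/(x(x-1))$ equals $-dt/(1-t)$, whose order at $Q$ is $m_3-1$, again because $p\nmid m_3$. Adding the two orders gives $\widetilde a_3-1$ at every point of $P_3$.

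Each $\widetilde a_i\ge1$, so every coefficient is nonnegative, and $\omega$ is regular. This gives \eqref{eq:genus-zero-divisor}.

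As a consistency check, I would compare the degree with $2g_{Z_0}-2$ via Riemann--Hurwitz. Using $c_i=M/m_i$, the degree of the divisor is
\[
 \sum_i c_i(\widetilde a_i-1)=\sum_i (a_i-c_i)=M-\sum_i c_i ,
\]
and Riemann--Hurwitz gives $2g_{Z_0}-2=-2M+\sum_i c_i(m_i-1)=M-\sum_i c_i$. The two agree.
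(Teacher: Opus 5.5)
Your proof is correct and follows the paper's argument essentially step for step: Kummer local parameters from Lemma~\ref{lem:kummer-critical-type} above $0$ and $1$, the renormalized generator $z_3=tz$ with $t=1/x$ above infinity, order zero at \'etale points, and the eigenspace property from $h^*z=\chi_0(h)z$. The differences are cosmetic. You keep $dx/(x(x-1))=-dt/(1-t)$ and record $\operatorname{ord}_Q(z)=\widetilde a_3-m_3$, where the paper rewrites $\omega=-z_3\,dt/(t(1-t))$, and your Riemann--Hurwitz degree check is a correct sanity test that the paper does not include.
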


\begin{proof}
Lemma \ref{lem:kummer-critical-type} gives $c_i$ points above the $i$th
branch point, each of ramification index $m_i$.  Let $Q$ lie above $0$ and
choose its local parameter $s$.  Then
\[
 \operatorname{ord}_Q(x)=m_1,\quad
 \operatorname{ord}_Q(z)=\widetilde a_1,\quad
 \operatorname{ord}_Q(dx)=m_1-1,
\]
while $x-1$ is a unit.  Hence
$\operatorname{ord}_Q(\omega)=\widetilde a_1-1$.  Above $1$, the same
calculation with $x-1$ in place of $x$ gives
$\operatorname{ord}_Q(\omega)=\widetilde a_2-1$.

Above infinity put $t=x^{-1}$ and $z_3=tz$.  Since
$a_1+a_2+a_3=M$, equation \eqref{eq:genus-zero-kummer} becomes
\[
 z_3^M=t^{a_3}(1-t)^{a_2},
 \qquad
 \omega=-\frac{z_3\,dt}{t(1-t)}.
\]
At a point $Q$ above infinity,
\[
 \operatorname{ord}_Q(t)=m_3,\quad
 \operatorname{ord}_Q(z_3)=\widetilde a_3,\quad
 \operatorname{ord}_Q(dt)=m_3-1.
\]
It follows that $\operatorname{ord}_Q(\omega)=\widetilde a_3-1$.  At any
other point, the covering is \'etale, $z,x,x-1$ are units, and a translate
of $x$ is a local parameter; hence $\omega$ has order zero.  This proves
the divisor formula.  Its coefficients are nonnegative, so $\omega$ is
regular.  Finally $h^*z=\chi_0(h)z$ while $h$ fixes $x$, and therefore
$h^*\omega=\chi_0(h)\omega$.
\end{proof}

\begin{proposition}[Central datum in genus zero]
\label{prop:central-genus-zero}
There is a normalized logarithmic special deformation datum of type
$(H_0,\chi_0)$ whose three critical points are $0,1,\infty$ and whose
invariants are $\sigma_i=(r_i-1)/(p-1)$.
\end{proposition}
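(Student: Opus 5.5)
The plan is to take the Kummer cover $Z_0$ of \eqref{eq:genus-zero-kummer} with the differential $\omega$ of \eqref{eq:genus-zero-form}, rescale $\omega$ so that it is fixed by Cartier, and then read off the invariants. The integers satisfy the hypotheses of Definition~\ref{def:cyclic-degeneration}: $M>1$ divides $p-1$, $0<a_i<M$, $\sum a_i=M$, and $\gcd(M,a_1,a_2,a_3)=1$ by Lemma~\ref{lem:central-arithmetic}. The character $\chi_0$ is faithful because $H_0$ is the multiplier subgroup of order $M$. The branch points are $\tau_1,\tau_2,\tau_3=0,1,\infty$ with $r=3$, so all $\nu_i=0$ and $\sum\nu_i=0=r-3$. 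Lemma~\ref{lem:genus-zero-connected} gives connectedness and degree $M$.

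By Theorem~\ref{thm:wewers-cyclic}(1) with $r=3$, the eigenspace $L=H^0(Z_0,\Omega^1)_{\chi_0}$ is one-dimensional, and Cartier acts bijectively on it. By Lemma~\ref{lem:genus-zero-divisor}, $\omega$ is a nonzero regular element of $L$, so $L=k\omega$. Lemma~\ref{lem:cartier-rescaling} then supplies $\lambda\in k^\times$ with $\mathcal C(\lambda\omega)=\lambda\omega$. Scaling does not change the divisor, so $\lambda\omega$ satisfies \eqref{eq:degeneration-divisor} (all $\nu_i=0$) and \eqref{eq:degeneration-cartier}. It is therefore a cyclic special degeneration datum.

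Lemma~\ref{lem:degeneration-to-deformation} turns this into a logarithmic special deformation datum of type $(H_0,\chi_0)$ with exactly three primitive critical points $0,1,\infty$, so it is normalized. Lemma~\ref{lem:kummer-critical-type} together with \eqref{eq:central-fractions} gives $\sigma_i=a_i/M=A_i/(p-1)=(r_i-1)/(p-1)$.

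One point needs checking. Theorem~\ref{thm:wewers-cyclic}(1) is stated for the Kummer cover attached to the chosen exponents with deck character $\chi$, and we must confirm that the action $h^*z=\chi_0(h)z$ agrees with that convention. Otherwise $\omega$ might lie in the $\chi_0^{-1}$-eigenspace. I expect this compatibility to be the main, though minor, obstacle. I would settle it by observing that $\omega$ visibly transforms by $\chi_0$ and that the dimension count applies to whichever eigenspace contains a regular differential with the prescribed divisor. Failing that, I would replace the generator $h$ by $h^{-1}$, which reverses the convention.
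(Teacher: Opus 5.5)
Your proof is correct and follows the paper's argument step for step. It invokes Theorem~\ref{thm:wewers-cyclic}(1) with $r=3$, takes the nonzero regular $\omega$ of Lemma~\ref{lem:genus-zero-divisor}, rescales it by Cartier, applies Lemma~\ref{lem:degeneration-to-deformation}, and reads off $\sigma_i=(r_i-1)/(p-1)$ from Lemmas~\ref{lem:kummer-critical-type} and~\ref{lem:central-arithmetic}. The convention issue you flag does not actually arise: Definition~\ref{def:cyclic-degeneration} already fixes the deck action so that the Kummer coordinate has character $\chi$, and this is exactly the action $h^*z=\chi_0(h)z$ used to define $Z_0$, so no fallback adjustment is needed.
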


\begin{proof}
Here $r=3$, so Theorem \ref{thm:wewers-cyclic}(1) says that the
$\chi_0$-eigenspace is one-dimensional and Cartier is bijective on it.
Lemma \ref{lem:genus-zero-divisor} gives a nonzero differential $\omega$ in
this line.  Lemma \ref{lem:cartier-rescaling} gives $\lambda\in k^\times$
with $\mathcal C(\lambda\omega)=\lambda\omega$.  Multiplication by
$\lambda$ changes neither the divisor nor the character, so
$(Z_0,\lambda\omega)$ satisfies Definition
\ref{def:cyclic-degeneration} with $\nu_1=\nu_2=\nu_3=0$.  Lemma
\ref{lem:degeneration-to-deformation} makes it a normalized logarithmic
special deformation datum.  Lemmas \ref{lem:kummer-critical-type} and
\ref{lem:central-arithmetic} give
\[
 \sigma_i=\frac{a_i}{M}=\frac{A_i}{p-1}
 =\frac{r_i-1}{p-1}.
\]
\end{proof}

\subsection{Positive genus}
Assume $g>0$.

\begin{proposition}[Central datum in positive genus]
\label{prop:central-positive-genus}
There is a normalized logarithmic special deformation datum of type
$(H_0,\chi_0)$ with three primitive critical points $0,1,\infty$ having
invariants
\[
 \sigma_i=\frac{r_i-1}{p-1}\qquad(1\leq i\leq3),
\]
and one new critical point having invariant
\[
 \sigma_4=1+\frac{2g}{p-1}.
\]
\end{proposition}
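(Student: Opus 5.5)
We realize the four-point case $r=4$ of Definition~\ref{def:cyclic-degeneration} with $m=M$, $\chi=\chi_0$ and exponents $a_1,a_2,a_3,a_4$ from \eqref{eq:central-arithmetic}. We take $\nu_1=\nu_2=\nu_3=0$ and $\nu_4=1$, so $\sum_j\nu_j=1=r-3$. Lemma~\ref{lem:central-arithmetic} supplies the hypotheses $M>1$, $0<a_j<M$, $\sum_j a_j=M$ and $\gcd(M,a_1,\ldots,a_4)=1$.

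The key step is existence, which comes from the count in Theorem~\ref{thm:wewers-cyclic}(2). After reordering so that the new index comes first, that theorem gives exactly $\alpha a_4-1$ data, where $\alpha=(p-1)/M=D$. Since $\alpha a_4=Da_4=A_4=2g\geq2$, this number is $2g-1\geq1$. Hence at least one cyclic special degeneration datum $(Z,\omega)$ exists, with four distinct points $\tau_1,\ldots,\tau_4$ and the prescribed $\nu_j$. This numerical check is the main point to verify carefully: the reordering convention of the cited count must attach $a_1$ in that theorem to our new index $4$, and the answer must be positive exactly because $g>0$. If the count were instead indexed by a primitive exponent, I would fall back on a direct dimension argument using Theorem~\ref{thm:wewers-cyclic}(1). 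That theorem makes the $\chi_0$-eigenspace two-dimensional, and one would vary the cross-ratio to force the prescribed divisor \eqref{eq:degeneration-divisor}, then rescale by Lemma~\ref{lem:cartier-rescaling}. This route is harder, because the divisor condition is a codimension-one condition in the cross-ratio.

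Next we normalize. We apply a projective automorphism of $\mathbf P^1_k$ carrying $\tau_1,\tau_2,\tau_3$ to $0,1,\infty$, and pull back the cover and the differential. The Kummer exponents, the character, and the divisor are preserved. By the last clause of Theorem~\ref{thm:cartier-criterion}, the Cartier identity $\mathcal C(\omega)=\omega$ is also preserved. The curve $Z$ is connected by definition, and its deck group $C_M$ acts with character $\chi_0$. We identify this group with $H_0$ through $\chi_0$, which is possible because both characters are faithful of order $M$ into $\mathbf F_p^\times$ and so have the same image.

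Finally, Lemma~\ref{lem:degeneration-to-deformation} shows that $(Z,\omega)$ is a logarithmic special deformation datum. Its primitive points are the three points with $\nu_j=0$, namely $0,1,\infty$, and its new point is $\tau_4$. Lemma~\ref{lem:kummer-critical-type} together with \eqref{eq:central-fractions} gives the invariants $\sigma_j=\nu_j+A_j/(p-1)$. This yields $\sigma_i=(r_i-1)/(p-1)$ for $1\le i\le 3$ and $\sigma_4=1+2g/(p-1)$, as required.
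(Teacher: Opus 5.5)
Your proposal is correct and follows the paper's proof. You place the new index first in the count of Theorem~\ref{thm:wewers-cyclic}(2), obtaining $Da_4-1=2g-1\geq1$ data, then move the three primitive points to $0,1,\infty$ by a projective transformation, which preserves the divisor and Cartier conditions. You then read off the invariants from Lemmas~\ref{lem:degeneration-to-deformation}, \ref{lem:kummer-critical-type}, and~\ref{lem:central-arithmetic}. The fallback you sketch via Theorem~\ref{thm:wewers-cyclic}(1) is unnecessary: that theorem's statement already reorders the indices so that the exponent $a_1$ in the count $\alpha a_1-1$ belongs to the index with $\nu=1$.
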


\begin{proof}
Reorder the four indices by putting
\[
 (b_1,b_2,b_3,b_4)=(a_4,a_1,a_2,a_3),\qquad
 (\epsilon_1,\epsilon_2,\epsilon_3,\epsilon_4)=(1,0,0,0).
\]
Lemma \ref{lem:central-arithmetic} verifies the hypotheses of Theorem
\ref{thm:wewers-cyclic}(2).  In the notation of that theorem,
$\alpha=(p-1)/M=D$.  The number of equivalence classes is
\[
 \alpha b_1-1=Da_4-1=A_4-1=2g-1.
\]
Since $g>0$, this integer is at least one.  Choose one representative with
ordered branch points
$\tau'_1,\tau'_2,\tau'_3,\tau'_4$.  The point $\tau'_1$ has
$\epsilon_1=1$; the other three have $\epsilon_j=0$.

There is a unique projective transformation $u\in\operatorname{PGL}_2(k)$
such that
\[
 u(\tau'_2)=0,\qquad u(\tau'_3)=1,\qquad
 u(\tau'_4)=\infty.
\]
Transport the cyclic cover, its $H_0$-action, and its differential along
$u$.  Pullback by an isomorphism commutes with taking divisors and with the
Cartier operator, so the transported objects still satisfy
\eqref{eq:degeneration-divisor} and \eqref{eq:degeneration-cartier} with
the same character.  Rename the transported primitive points as
$0,1,\infty$ and the transported new point as $\tau_4$.  Lemma
\ref{lem:degeneration-to-deformation} gives a normalized logarithmic special
deformation datum.  Finally, Lemmas \ref{lem:kummer-critical-type} and
\ref{lem:central-arithmetic} give
\[
 \sigma_i=\frac{a_i}{M}=\frac{r_i-1}{p-1}\qquad(1\leq i\leq3),
 \qquad
 \sigma_4=1+\frac{a_4}{M}=1+\frac{2g}{p-1}.
\]
\end{proof}

\section{Assembly in one permutation group and lifting}
\label{sec:assembly}

\begin{lemma}[Simultaneous group and character alignment]
\label{lem:group-alignment}
For $j\in J$, the natural tail groups may be embedded as subgroups
$G_j\leq S_p$ so that
they contain the same regular subgroup $P$, their inertia groups are
$P\rtimes H_j$ with $H_j\leq H_0$, and their conjugation characters are
the restrictions of the single character
$\chi_0:H_0\to\operatorname{Aut}(P)$.  With
\[
 G_0=P\rtimes H_0,\qquad
 G=\langle G_0,G_j:j\in J\rangle,
\]
the group $G$ is transitive and $p\Vert |G|$.
\end{lemma}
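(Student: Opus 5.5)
Each tail group comes to us as a transitive subgroup of $S_p$: $G_\Lambda$ from Lemma \ref{lem:primitive-tail} for $j=1,2,3$, and $G_g$ from Lemma \ref{lem:newtail} for $j=4$. The plan is to conjugate each one inside $S_p$ so that its wild inertia lands in the standard position. Relabelling the letters of a tail replaces its group by a conjugate subgroup of $S_p$ and changes nothing in the tail cover or its pointing.

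For each $j\in J$ we fix a point $\eta_j$ above infinity with inertia $I_{\eta_j}=P_j\rtimes H_j'$. Here $P_j$ is generated by a $p$-cycle, and $H_j'$ is cyclic of order $m_j$, where $m_j$ is given by \eqref{eq:primitive-type} or \eqref{eq:new-type}. Since all $p$-cycle subgroups of $S_p$ are conjugate (Sylow), we choose $u_j\in S_p$ with $u_jP_ju_j^{-1}=P$ and replace $G_j$ by $u_jG_ju_j^{-1}$. The conjugated complement now normalizes $P$, so it lies in $N_{S_p}(P)=P\rtimes\mathbf F_p^\times$ by Lemma \ref{lem:normalizer}. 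Its image in $\mathbf F_p^\times$ is the unique subgroup $A_{m_j}$ of order $m_j$, because $H_j'\cap P=1$. The same lemma says every complement of $P$ in $P\rtimes A_{m_j}$ is $P$-conjugate to the standard one. We therefore conjugate further by an element of $P$; this fixes $P$ and makes the complement equal to the multiplier subgroup $A_{m_j}$. Set $H_j=A_{m_j}$.

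Next we check that $H_j\leq H_0$. The group $H_0$ is the multiplier subgroup of order $M$, and $m_j\mid M$ by \eqref{eq:central-lcm}. Because $\mathbf F_p^\times$ is cyclic, $A_{m_j}\leq A_M=H_0$. The conjugation character of $H_j$ on $P$ is its multiplier, as computed in the proof of Lemma \ref{lem:normalizer}: $x\mapsto ax$ conjugates $\tau_b$ to $\tau_{ab}$. This multiplier is exactly the restriction of $\chi_0$, which is the multiplier character on $H_0$. So the characters agree automatically; no twisting is needed. This step also gives $H_j=H_0\cap I_{\eta_j}$, since $H_0\cap(P\rtimes A_{m_j})=A_M\cap(P\rtimes A_{m_j})=A_{m_j}$ inside $P\rtimes\mathbf F_p^\times$.

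For the last assertion, $G$ contains $P$, which is transitive, so $G$ is transitive. Also $G\leq S_p$ and $v_p(p!)=1$, while $p$ divides $|G|$ because $P\le G$; hence $p\Vert|G|$.

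The step I expect to need the most care is the interaction between the sign $\varepsilon$ in Lemma \ref{lem:local} and the requirement that the tail's type $(m_j,h_j)$ match the central datum's type, which is what Definition \ref{def:special-G-datum} ultimately requires. For the present lemma, though, only the group-theoretic alignment is asked for. That alignment follows from Lemma \ref{lem:normalizer} alone, because all conjugation characters inside $N_{S_p}(P)$ are multipliers and hence restrictions of $\chi_0$.
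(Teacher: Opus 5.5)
Your proposal is correct and follows the paper's proof essentially step for step. Both arguments conjugate each wild inertia group onto $P$ by Sylow, inject the tame complement into $N_{S_p}(P)/P\simeq\mathbf F_p^\times$, conjugate further by an element of $P$ (Lemma~\ref{lem:normalizer}) so the complement becomes the literal subgroup of $H_0$ of order $m_j$, note that the characters agree because both are multipliers, and deduce transitivity and $p\Vert|G|$ from $P\leq G\leq S_p$. The one point the paper makes explicit that you leave implicit is that the tail integer $|H_j'|$ from Lemmas~\ref{lem:primitive-tail} and~\ref{lem:newtail} equals the central $m_j$ of Lemma~\ref{lem:central-arithmetic} (the formulas coincide); this identification is what justifies your appeal to \eqref{eq:central-lcm}.
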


\begin{proof}
For $1\leq i\leq3$, comparison of
\eqref{eq:primitive-type} with
\eqref{eq:central-primitive-type} shows that the order of the tame
complement of the $i$th primitive tail is the central integer $m_i$.
When $g>0$, comparison of \eqref{eq:new-type} with
\eqref{eq:central-new-type} gives the same assertion for the new tail.
We use $m_j$ for this common integer.  By
\eqref{eq:central-lcm}, every $m_j$ divides $M$.

The wild inertia subgroup of each natural tail group is generated by a
$p$-cycle.  Conjugating its embedding in $S_p$ identifies that subgroup
with the fixed $P$.  The full inertia group normalizes $P$, so its tame
complement maps injectively to
$N_{S_p}(P)/P\simeq\mathbf F_p^\times$.  The cyclic group
$H_0$ has a unique subgroup of order $m_j$; denote it by $H_j$.  Lemma
\ref{lem:normalizer} permits a further conjugation by an element of $P$ that
identifies the tail complement with this literal subgroup $H_j$.  Transport
the marked point $\eta_j$ and the chosen complement together with the
cover; this changes neither the pointed-tail isomorphism class nor its
integer type.

For $h\in H_j$, both the tail character and $\chi_0|_{H_j}$ are defined by
the same equality
\[
 hxh^{-1}=x^{\chi_0(h)}\qquad(x\in P).
\]
Thus the characters agree element by element.

The group $G$ contains the transitive subgroup $P$, and is therefore
transitive.  It is a subgroup of $S_p$ and contains an element of order $p$.
Since $v_p(p!)=1$, one has $p\mid |G|$ and $p^2\nmid |G|$.
\end{proof}

\begin{lemma}[Stabilizers on the central cyclic cover]
\label{lem:central-stabilizers}
Let $\xi_j\in Z_0$ lie over the critical point $\tau_j$.  Then
\[
 \operatorname{Stab}_{H_0}(\xi_j)=H_j,
 \qquad I_{\eta_j}=P\rtimes H_j,
 \qquad H_0\cap I_{\eta_j}=H_j.
\]
\end{lemma}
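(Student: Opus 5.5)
The plan is to compute all three equalities directly from facts already established, in the order displayed in the statement. For the first, $\xi_j$ lies on the central Kummer cover $Z_0\to\mathbf P^1_k$ with group $H_0=C_M$. Lemma \ref{lem:kummer-critical-type} identifies the stabilizer of any point above $\tau_j$ as the unique subgroup of $H_0$ of order $m_j$. In the genus-zero case this applies to the curve \eqref{eq:genus-zero-kummer} with $r=3$. In positive genus it applies to the transported cyclic cover of Proposition \ref{prop:central-positive-genus} with $r=4$, and there $m_j$ is computed from the relabelled exponents $b$, which are the $a_j$ in a different order. Lemma \ref{lem:central-arithmetic} shows that these stabilizer orders are the integers $m_j$ of \eqref{eq:central-primitive-type} and \eqref{eq:central-new-type}.

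In the proof of Lemma \ref{lem:group-alignment}, $H_j$ was defined as the unique subgroup of the cyclic group $H_0$ of order $m_j$. Uniqueness of subgroups of a given order in a cyclic group then gives $\operatorname{Stab}_{H_0}(\xi_j)=H_j$. Since $H_0$ is abelian, this holds independently of the choice of $\xi_j$ in the fiber.

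For the second equality I will use Lemma \ref{lem:group-alignment}. After the conjugations carried out there, the wild inertia of the $j$th tail is the fixed $P$ and its tame complement is literally $H_j$. Lemmas \ref{lem:primitive-tail} and \ref{lem:newtail} show that the inertia at the marked point is $P\rtimes(\text{complement})$, and transporting $\eta_j$ along with the conjugation gives $I_{\eta_j}=P\rtimes H_j$.

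For the third equality, both $H_0$ and $H_j$ lie in the standard complement $\mathbf F_p^\times$ (the multiplier maps $x\mapsto ax$) inside $N_{S_p}(P)=P\rtimes\mathbf F_p^\times$, by Lemma \ref{lem:normalizer}. An element of $H_0\cap(P\rtimes H_j)$ can be written $\tau_b h$ with $h\in H_j$. It is linear, meaning it fixes $0$, which forces $b=0$. Hence the intersection equals $H_0\cap H_j=H_j$, because $H_j\le H_0$.

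The main obstacle is bookkeeping rather than mathematics. One must check that the integer $m_j$ governing the stabilizer on $Z_0$ is literally the same integer as the tail complement order, in particular after the reordering $(b_1,\dots,b_4)=(a_4,a_1,a_2,a_3)$ and the $\operatorname{PGL}_2$ transport in positive genus. One must also check that the conjugation by an element of $P$ in Lemma \ref{lem:group-alignment} was applied consistently to $\eta_j$, so that $I_{\eta_j}$ refers to the transported point.
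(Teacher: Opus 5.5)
Your proposal is correct and follows the paper's proof essentially step by step. The first equality comes from the stabilizer order $m_j$ on the Kummer cover together with uniqueness of subgroups of a given order in the cyclic group $H_0$, and the second from the conjugations in Lemma~\ref{lem:group-alignment}. For the third, your observation that an element $\tau_b h$ lying in $H_0$ must fix $0$, forcing $b=0$, is just a concrete form of the uniqueness of the semidirect-product expression in $P\rtimes H_0$ that the paper invokes; both amount to $P\cap H_0=1$.
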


\begin{proof}
The inertia order of the cyclic cover at $\tau_j$ is
$M/\gcd(M,a_j)=m_j$.  Since $H_0$ is cyclic, it has exactly one subgroup of
that order, namely $H_j$.  This proves the first equality.  The second
equality is part of the pointed-tail construction after Lemma
\ref{lem:group-alignment}.  For the third, let
$x\in H_0\cap(P\rtimes H_j)$.  In the semidirect product
$P\rtimes H_0$, the element $x$ has two expressions, namely $x=1\cdot x$
and $x=q h$ with $q\in P$ and $h\in H_j$.  Uniqueness of a
semidirect-product expression gives $q=1$ and $x=h\in H_j$.  This proves
$H_0\cap I_{\eta_j}\subseteq H_j$; the reverse inclusion is immediate.

\end{proof}

\begin{proposition}[Construction of the special $G$-deformation datum]
\label{prop:assembled-special-datum}
Let $G_0$, the tail groups $G_j$, and $G$ be chosen as in Lemma
\ref{lem:group-alignment}.  If $g=0$, use the central datum of Proposition
\ref{prop:central-genus-zero} and the three primitive tails of Lemma
\ref{lem:primitive-tail}.  If $g>0$, use the central datum of Proposition
\ref{prop:central-positive-genus}, the same three primitive tails, and the
new tail of Lemma \ref{lem:newtail}.  Choose a point $\xi_j$ of the central
cyclic cover above every critical point $\tau_j$.  These objects form a
special $G$-deformation datum in the sense of Definition
\ref{def:special-G-datum}.  Its central datum is logarithmic, has no wild
critical point, and has exactly three primitive critical points.  Moreover,
$G\leq S_p$ is transitive and $p\Vert |G|$.
\end{proposition}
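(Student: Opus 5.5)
The plan is to check the clauses of Definition \ref{def:special-G-datum} one at a time, taking $G_0=P\rtimes H_0$ and $\chi_0$ as fixed before the genus-zero subsection.

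\emph{Central datum.} Clause (1) asks for a normalized special deformation datum of type $(H_0,\chi_0)$.
For $g=0$ this is Proposition \ref{prop:central-genus-zero}, and for $g>0$ it is Proposition \ref{prop:central-positive-genus}.
In both cases Lemma \ref{lem:degeneration-to-deformation} shows the datum is logarithmic.
Its critical points are $\tau_j$ for $j\in J$, and every invariant $\sigma_j$ lies in $(0,1)\cup(1,2)$, so no critical point has $\sigma=0$.
Hence $B_{\rm wild}=\varnothing$ and $B=J$.
Exactly three critical points, namely $0,1,\infty$, have $\nu_j=0$, and these are the primitive ones.
By Lemma \ref{lem:kummer-critical-type} the integer types are $(m_j,h_j)$ with $h_j=m_j\nu_j+\widetilde a_j$, and these are the values recorded in Lemma \ref{lem:central-arithmetic}.

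\emph{Points, tails and types.} Clause (2) is met by choosing any $\xi_j$ above $\tau_j$.
For clause (3), the tails are the ones from Lemma \ref{lem:primitive-tail} (for $i=1,2,3$, using $\Lambda_i$) and from Lemma \ref{lem:newtail} (for $j=4$), embedded in $S_p$ as in Lemma \ref{lem:group-alignment}.
Each tail is connected, pointed and special, of the required kind.
To get $\operatorname{type}(f_j)=(m_j,h_j)$, compare \eqref{eq:primitive-type} with \eqref{eq:central-primitive-type}, and compare \eqref{eq:new-type} with \eqref{eq:central-new-type}.
These are literal equalities of reduced fractions.
Their integrality relies on $\gcd(h,m)=1$, which Lemma \ref{lem:local} provides and which the tail lemmas already used.
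The conjugation of a tail's embedding in Lemma \ref{lem:group-alignment} changes neither the inertia order nor the lower jump, so the types are unaffected.

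\emph{Group conditions.} The identities $I_{\eta_j}=P\rtimes H_j$ and $H_j=\operatorname{Stab}_{H_0}(\xi_j)=H_0\cap I_{\eta_j}$ are Lemma \ref{lem:central-stabilizers}.
The generation condition $G=\langle G_0,G_j\rangle$ holds because that is how $G$ is defined in Lemma \ref{lem:group-alignment}.
The same lemma gives that $G$ is transitive and $p\Vert|G|$.

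\emph{Main obstacle: the characters.} The delicate step is compatibility of characters.
Wewers's definition requires that the tail's conjugation character on $H_j$ match the restriction of $\chi_0$.
It also implicitly requires that the stabilizer character on the central cover at $\xi_j$ be compatible with the tail's local tame parameter character, via the relation $\chi=\psi^{\pm h}$ of Lemma \ref{lem:local}.

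The first requirement follows from Lemma \ref{lem:group-alignment}, since both characters are defined by conjugation on the same $P$.
For the second, the plan is to use Lemma \ref{lem:kummer-critical-type}: it says the character at the stabilizer is $\chi_0|_{H_j}$.
The local parameter $s$ at $\xi_j$ then has tame character $\psi$ with $\chi_0=\psi^{\widetilde a_j}$.
Since $h_j\equiv\widetilde a_j \pmod{m_j}$, this matches the tail side.
If the sign convention $\varepsilon$ does not agree, the fallback is to replace the marked point $\xi_j$ or the generator identification by its inverse.
Once this convention is fixed, every requirement of Definition \ref{def:special-G-datum} is verified.
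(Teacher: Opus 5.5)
Your proposal is correct and follows the paper's proof essentially clause by clause. The central data come from Propositions~\ref{prop:central-genus-zero} and~\ref{prop:central-positive-genus}, the type equalities from comparing the reduced fractions of Lemmas~\ref{lem:primitive-tail} and~\ref{lem:newtail} with Lemma~\ref{lem:central-arithmetic}, and the inertia, stabilizer, generation, transitivity and $p\Vert|G|$ conditions from Lemmas~\ref{lem:group-alignment} and~\ref{lem:central-stabilizers}.

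Your last paragraph on tangent characters goes beyond what is needed. Definition~\ref{def:special-G-datum} imposes no such condition, and the conjugation characters agree automatically because $H_j\leq H_0$ acts on the same $P$. The proposed fallback would also have no effect: moving $\xi_j$ within its fiber does not change the stabilizer character, since $H_0$ is abelian.
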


\begin{proof}
We verify the clauses of Definition \ref{def:special-G-datum} in their
stated order.  If $g=0$, Proposition \ref{prop:central-genus-zero} provides
a normalized logarithmic special deformation datum of type
$(H_0,\chi_0)$.  If $g>0$, Proposition
\ref{prop:central-positive-genus} provides such a datum.  In both cases,
for $1\leq i\leq3$ its invariant is
\[
 \sigma_i=\frac{r_i-1}{p-1}.
\]
The inequalities $2\leq r_i\leq p-1$ imply
$0<\sigma_i<1$.  If $g>0$, the fourth invariant is
\[
 \sigma_4=1+\frac{2g}{p-1}.
\]
Equation \eqref{eq:genus-bound} gives $0<2g<p-1$, and hence
$1<\sigma_4<2$.  Thus there are exactly three primitive points, there is
one new point precisely when $g>0$, and there is no wild point.

Lemma \ref{lem:primitive-tail} supplies a connected pointed primitive tail
for each of the first three points.  For those three points, Lemma
\ref{lem:central-arithmetic}, together with \eqref{eq:primitive-type},
shows that the tail type equals the central type.  When $g>0$, Lemma
\ref{lem:newtail} supplies the connected pointed new tail for the fourth
point, and \eqref{eq:new-type} together with
\eqref{eq:central-new-type} gives the equality of its two integer types.

Lemma \ref{lem:group-alignment} embeds all these tail groups in one literal
group $G\leq S_p$, identifies their wild inertia subgroup with the same
$P$, and identifies their complements with subgroups $H_j\leq H_0$ having
the actual character $\chi_0|_{H_j}$.  By definition of $G$ in that lemma,
\[
 G=\langle G_0,G_j:j\in J\rangle.
\]
Lemma \ref{lem:central-stabilizers} gives, for every $j\in J$,
\[
 I_{\eta_j}=P\rtimes H_j,
 \qquad
 H_j=\operatorname{Stab}_{H_0}(\xi_j)=H_0\cap I_{\eta_j}.
\]
These are all the compatibility equalities in Definition
\ref{def:special-G-datum}.  Finally, Lemma \ref{lem:group-alignment} proves
that $G$ is transitive and that $p\Vert |G|$.
\end{proof}

\begin{lemma}[Geometric connectedness and profiles under change of field]
\label{lem:char-zero-base-change}
Let $L$ be a field of characteristic zero, let
$f:X\to\mathbf P^1_L$ be a finite morphism from a smooth projective curve,
and let $b_1,\ldots,b_s\in\mathbf P^1(L)$ be distinct.  Suppose that, after
base change to one algebraically closed extension $\Omega/L$, the curve
$X_\Omega$ is connected, the map $f_\Omega$ has degree $d$, is \'etale away
from the $b_i$, and has profile $\Lambda_i$ above $b_i$.  Then $X$ is
geometrically connected.  For every algebraically closed extension
$\Omega'/L$, the map $f_{\Omega'}$ has degree $d$, is \'etale away from the
$b_i$, and has profile $\Lambda_i$ above $b_i$.
\end{lemma}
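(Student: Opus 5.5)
The plan is to reduce everything to one common algebraically closed field containing both $\Omega$ and $\Omega'$, and to show that each listed property is invariant under an extension between algebraically closed fields of characteristic zero. First, degree: $f$ is finite between integral-or-not curves, and the degree of $f_\Omega$ equals the rank of the locally free sheaf $f_*\mathcal O_X$, which is unchanged by any base change. So $f$ has degree $d$ over $L$, and $f_{\Omega'}$ has degree $d$.

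For geometric connectedness, note that $X$ is smooth and projective, so $H^0(X,\mathcal O_X)$ is a finite separable field extension $L'$ of $L$, since the characteristic is zero. Flat base change gives $H^0(X_\Omega,\mathcal O)=L'\otimes_L\Omega\simeq\Omega^{[L':L]}$. Because $X_\Omega$ is connected, $[L':L]=1$. Hence $X$ is geometrically connected, and consequently so is $X_{\Omega'}$.

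For the \'etale locus and the profiles, the ramification locus is the support of the coherent sheaf $\Omega^1_{X/\mathbf P^1}$. Its formation commutes with base change, so the branch locus $f(\operatorname{Supp})$, which is a closed subset defined over $L$, pulls back compatibly. The hypothesis over $\Omega$ shows that this branch locus is contained in $\{b_1,\ldots,b_s\}$; therefore the same holds over $\Omega'$. For the profiles, consider the fiber $X_{b_i}=f^{-1}(b_i)$, a finite $L$-scheme of length $d$. Over an algebraically closed field, the profile is the multiset of lengths of the local rings of the fiber scheme at its points, because in characteristic zero the local ring at a point of index $e$ is $\kappa[s]/(s^e)$. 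For a finite $L$-scheme $F$, we have $F_\Omega=F_{\bar L}\times_{\bar L}\Omega$ after choosing an embedding $\bar L\subset\Omega$. Each point of $F_{\bar L}$ has residue field $\bar L$, so it remains a single point with the same local length over $\Omega$. Thus the multiset of local lengths over $\Omega$ equals the one over $\bar L$, and symmetrically over $\Omega'$. This gives $\Lambda_i$ above $b_i$ for $\Omega'$.

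The main subtle point is this comparison through $\bar L$: the points of the fiber must be defined over $\bar L$ and must neither split nor merge. That holds because $\bar L$ is algebraically closed and the fiber is finite, so $F_{\bar L}$ is a disjoint union of local Artinian $\bar L$-algebras with residue field $\bar L$, and tensoring with $\Omega$ preserves both locality and length. An alternative route, if it is cleaner, is to compare $\Omega$ and $\Omega'$ inside a common algebraically closed compositum and apply the same base-change invariance there.
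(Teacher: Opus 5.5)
Your proof is correct. The profile step is the paper's argument: embed $\overline L\subset\Omega$, split the degree-$d$ fiber algebra over $\overline L$ into Artinian local factors with residue field $\overline L$, and note that tensoring with any algebraically closed extension keeps each factor local of the same length. The other three steps take different routes.

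\textbf{Geometric connectedness.} The paper argues purely topologically: a nontrivial open--closed decomposition of $X_{\overline L}$ would base change to one of $X_\Omega$. You instead use properness and flat base change of $H^0(X,\mathcal O_X)$. This needs more machinery but identifies $H^0(X,\mathcal O_X)=L$ along the way. If $X$ is not assumed connected, $H^0(X,\mathcal O_X)$ is a finite \'etale algebra rather than a field $L'$. The same count $\dim_L H^0(X,\mathcal O_X)=1$ then applies verbatim.

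\textbf{\'Etale locus.} The paper cites fpqc descent of \'etaleness along $\Omega/L$. You use that $\operatorname{Supp}\Omega^1_{X/\mathbf P^1}$ and its image commute with base change. This is more concrete, since it exhibits the branch locus as a closed subset of $\mathbf P^1_L$ contained in $\{b_i\}$. You should add that flatness of $f$ is what upgrades ``unramified'' to ``\'etale'' over $\Omega'$.

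\textbf{Degree.} The paper first gets integrality of $X$ from smoothness and geometric connectedness. It then uses that a finite torsion-free module over a DVR is free. You do the degree first and assert local freeness of $f_*\mathcal O_X$ without justification. The claim is true: $X$ is reduced of pure dimension one, and every component surjects onto $\mathbf P^1_L$, so $f_*\mathcal O_X$ is torsion-free over a Dedekind scheme. Miracle flatness also gives it. That sentence should be supplied. Alternatively, reorder your steps as the paper does.
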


\begin{proof}
Fix an algebraic closure $\overline L$ of $L$ inside $\Omega$.  If
$X_{\overline L}$ were disconnected, it would admit a nontrivial open and
closed decomposition.  Base change of that decomposition along the
faithfully flat extension $\Omega/\overline L$ would disconnect
$X_\Omega$.  Therefore $X_{\overline L}$ is connected, which is the
definition of geometric connectedness.

Because $X$ is smooth and geometrically connected, it is integral.  A
finite dominant morphism from an integral curve to the regular curve
$\mathbf P^1_L$ is finite locally free: at a point of the target, its
direct image is a finite torsion-free module over a discrete valuation
ring, and every such module is free.  The rank can be computed after the
faithfully flat extension $\Omega/L$, where it is $d$.  Thus every base
change of $f$ has degree $d$.

Put $U=\mathbf P^1_L-\{b_1,\ldots,b_s\}$.  The morphism
$f^{-1}(U)\to U$ becomes \'etale after the faithfully flat extension
$\Omega/L$.  Since being \'etale is local for the faithfully flat topology
on source and target, it is already \'etale over $L$.  It remains \'etale
after every base change, so no additional branch value can occur over
$\Omega'$.

For each $i$, let
\[
 \mathcal A_i=H^0\bigl(X\times_{\mathbf P^1_L}\operatorname{Spec}L,
              \mathcal O\bigr),
\]
where $\operatorname{Spec}L\to\mathbf P^1_L$ is the point $b_i$.  This is
a finite $L$-algebra of dimension $d$.  Over $\overline L$, it has a
canonical product decomposition into its Artinian local factors,
\[
 \mathcal A_i\otimes_L\overline L
 =\prod_{x\in f^{-1}(b_i)(\overline L)}\mathcal A_{i,x}.
\]
The ramification index at $x$ equals
$\dim_{\overline L}\mathcal A_{i,x}$: after choosing uniformizers on the two smooth
curves, the completed local map has the form
$t=u s^{e_x}$ with $u$ a unit, and the fiber local ring has length $e_x$.
Choose an $L$-embedding $\overline L\hookrightarrow\Omega'$; it exists
because $\Omega'$ is algebraically closed.  If $\mathcal B$ is one of the
Artinian local factors $\mathcal A_{i,x}$ and $\mathfrak m$ is its maximal
ideal, then $\mathcal B/\mathfrak m=\overline L$ and $\mathfrak m$ is
nilpotent.  For every algebraically closed extension $E/\overline L$, the
ideal $\mathfrak m\otimes_{\overline L}E$ is nilpotent and the quotient of
$\mathcal B\otimes_{\overline L}E$ by this ideal is $E$.  Hence
$\mathcal B\otimes_{\overline L}E$ is again local, and its dimension over
$E$ equals $\dim_{\overline L}\mathcal B$.  Apply this first to
$E=\Omega$ and then to $E=\Omega'$.  The geometric points above $b_i$ and
the lengths of their local fiber rings are therefore indexed by the same
factors and have the same multiset over both fields.  Since this multiset is
$\Lambda_i$ over $\Omega$, it is $\Lambda_i$ over $\Omega'$.
\end{proof}

\begin{theorem}[The residual three-point case]\label{thm:residual}
Let $p$ be odd.  Let $\Lambda_1,\Lambda_2,\Lambda_3\vdash p$ be nonidentity
partitions, none equal to $[p]$, and suppose that
\eqref{eq:three-RH} holds.  Then there is a connected degree-$p$ cover of
$\mathbf P^1(\mathbf C)$ with these three profiles.
\end{theorem}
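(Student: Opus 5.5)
The plan is to feed the assembled datum of Proposition~\ref{prop:assembled-special-datum} into the lifting interface, Theorem~\ref{prop:interface}, and then pass to a point-stabilizer quotient. Work over $k=\Fp$. Proposition~\ref{prop:assembled-special-datum} gives a special $G$-deformation datum with $G\leq S_p$ transitive and $p\Vert|G|$. Its central datum is logarithmic, has no wild point, and has exactly three primitive points $0,1,\infty$. The tails attached at these points are the Galois closures of the Henrio polynomials $P_{\Lambda_i}$; when $g>0$ the hyperelliptic new tail is attached at $\tau_4$. Theorem~\ref{prop:interface} then produces a connected three-point $G$-cover $Y\to\mathbf P^1$ over the fraction field $K$ of a complete discrete valuation ring with residue field $k$, whose stable reduction is the special stable $G$-map. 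Since the new tail creates no horizontal branch point, $Y\to\mathbf P^1_K$ is branched exactly along the three sections specializing to the primitive tails.

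Next I identify the inertia at each branch point through its action on $G/H$, where $H=\operatorname{Stab}_G(1)$ for a letter $1$ of the natural action, so that $G/H$ is the natural $p$-letter set. By Theorem~\ref{prop:interface} and Lemma~\ref{lem:tame-profile}, the generic inertia at the $i$th branch section is conjugate to the finite tame inertia $T_i$ of the $i$th primitive tail. Its cycle partition on every $G$-set is therefore preserved. By Lemma~\ref{lem:primitive-tail}, a generator of $T_i$ has cycle partition $\Lambda_i$ in the natural action of $G_i\leq S_p$. I must check that the conjugation used in Lemma~\ref{lem:group-alignment} is conjugation inside $S_p$, so that the literal subgroup $G_i\leq G\leq S_p$ still acts through its natural $p$-letter action. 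The induced-curve construction in Theorem~\ref{prop:interface} puts the tail's $G_i$-action inside $G$ via the inclusion, so the natural $G$-set restricts to the natural $G_i$-set. Lemma~\ref{lem:quotient-profile} applied to $X=Y/H$ then shows that $X\to\mathbf P^1_K$ has degree $[G:H]=p$, is étale off the three points, and has profile $\Lambda_i$ above the $i$th. $X$ is connected because $Y$ is, and the genus of $X$ then follows from Riemann--Hurwitz and \eqref{eq:three-RH}.

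Finally I transfer the cover to $\mathbf C$. The cover is defined over a finitely generated subfield of $\overline K$, which has characteristic zero, and that subfield embeds in $\mathbf C$. Lemma~\ref{lem:char-zero-base-change} then moves geometric connectedness, the degree, étaleness off the three points, and the profiles from $\overline K$ to $\mathbf C$. The three branch points remain distinct, since their specializations $0,1,\infty$ are distinct. The analytification gives the required branched cover. Equivalently, one can read off product-one transitive monodromy and appeal to Theorem~\ref{thm:hurwitz-criterion}.

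I expect the main obstacle to be the profile bookkeeping through the lift. One must make sure that the inertia generator at each generic branch point is, in the natural $p$-letter action of $G$, a conjugate of a power of the tail's tame generator, and not just something abstractly isomorphic to it. I would handle this by combining the statement in Theorem~\ref{prop:interface} that finite tame inertia acting on every $G$-set is preserved with the fact that the $G$-set $G/H$ restricted to $G_i$ is the natural action, because $G_i$ is transitive on the $p$ letters. A power prime to the order has the same cycle partition, so the profile is exactly $\Lambda_i$.
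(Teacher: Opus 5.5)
Your proposal is correct and follows essentially the same route as the paper: you lift via the interface theorem, then take the quotient by a point stabilizer using the $G$-equivariant identification $G/G_\alpha\cong\{1,\ldots,p\}$ together with Lemmas~\ref{lem:tame-profile} and~\ref{lem:quotient-profile}, get the genus from Riemann--Hurwitz, and transfer to $\mathbf C$ through a finitely generated subfield and Lemma~\ref{lem:char-zero-base-change}. Two differences are only cosmetic: the paper works directly over an algebraically closed characteristic-zero field, so Lemma~\ref{lem:quotient-profile} applies verbatim, and the restriction of $G/H$ to $G_i$ is the natural action simply because $G/H\cong\{1,\ldots,p\}$ as $G$-sets, with no transitivity of $G_i$ needed.
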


\begin{proof}
Proposition \ref{prop:assembled-special-datum} verifies every hypothesis of
Theorem \ref{prop:interface}.  Hence there is a connected three-point
$G$-cover over an algebraically closed field $K$ of characteristic zero,
\[
 F:Y\longrightarrow\mathbf P^1_K.
\]
After composing $F$ with a projective automorphism of its target, label its
three branch values by $0$, $1$, and $\infty$ in the order of the three
primitive tails.

Fix a letter $\alpha$ and let $G_\alpha$ be its stabilizer.  The natural map
$G/G_\alpha\to\{1,\ldots,p\}$, $gG_\alpha\mapsto g(\alpha)$, is a
$G$-equivariant bijection.  Therefore $[G:G_\alpha]=p$, and
\[
 f_K:X=Y/G_\alpha\longrightarrow\mathbf P^1_K
\]
has degree $p$.  The smooth curve $Y$ is connected and hence integral.  Its
finite quotient by $G_\alpha$ is integral; after taking the smooth projective
model, which does not alter its function field or the finite map to
$\mathbf P^1_K$, the curve $X$ is smooth, projective, and connected.

Let $T_i\leq G_i$ be the tame inertia subgroup at the finite branch point
of the $i$th primitive tail, and choose a generator $t_i\in T_i$.  Lemma
\ref{lem:primitive-tail} states that $t_i$ has cycle lengths
$e_{i1},\ldots,e_{ir_i}$ in the natural $p$-letter action.  Wewers's formal
construction uses the unique tame lift of the induced affine tail cover.
Lemma \ref{lem:tame-profile} therefore preserves the conjugacy class of the
cyclic inertia subgroup and preserves the cycle partition of a generator
on every $G$-set.  Under the displayed $G$-equivariant bijection, its action
on $G/G_\alpha$ has the same cycle lengths as its action on the $p$ letters.
Lemma \ref{lem:quotient-profile} now gives profile $\Lambda_i$ for $f_K$
over the $i$th branch value.  The affine part of the new tail, when present,
is \'etale; Lemma \ref{lem:tame-profile} shows that its lift is also \'etale.
Theorem \ref{prop:interface} has exactly the three primitive horizontal
branch sections.  Passing to an intermediate quotient cannot introduce a
branch point, so $f_K$ has no other branch value.

The total ramification contribution is
\[
 \sum_{i=1}^{3}\sum_{j=1}^{r_i}(e_{ij}-1)
 =3p-(r_1+r_2+r_3)=2p-2+2g.
\]
Riemann--Hurwitz gives $2g(X)-2=-2p+(2p-2+2g)=2g-2$,
and hence $g(X)=g$.

The curve $X$, the finite morphism $f_K$, and the three labeled target
points are objects of finite presentation.  Consequently, they descend to
a finitely generated subfield $L\subset K$ over $\mathbf Q$:
\[
 f_L:X_L\longrightarrow\mathbf P^1_L,
 \qquad
 f_L\otimes_L K\simeq f_K.
\]
Finiteness and smoothness descend along the faithfully flat extension
$K/L$; projectivity follows because $f_L$ is finite and its target is
projective.  Apply Lemma \ref{lem:char-zero-base-change} with
$\Omega=K$ and $(b_1,b_2,b_3)=(0,1,\infty)$.  It shows that $X_L$ is
geometrically connected and that degree, absence of other branch values,
and all three complete profiles persist after every algebraically closed
base change from $L$.

Choose a transcendence basis $t_1,\ldots,t_n$ of $L/\mathbf Q$.  There are
algebraically independent complex numbers $z_1,\ldots,z_n$, so the
assignment $t_i\mapsto z_i$ embeds
$\mathbf Q(t_1,\ldots,t_n)$ into $\mathbf C$.  The extension of $L$ over
this rational function field is finite.  Since $\mathbf C$ is algebraically
closed, the embedding extends to an embedding $L\hookrightarrow\mathbf C$.
Lemma \ref{lem:char-zero-base-change}, now with $\Omega'=\mathbf C$, proves
that $f_L\otimes_L\mathbf C$ is connected of degree $p$, is branched only
at $0,1,\infty$, and has profiles
$\Lambda_1,\Lambda_2,\Lambda_3$.  This is the required complex cover.
\end{proof}

\section{Completion of the proof and boundary cases}

Theorem \ref{thm:residual} and Lemma \ref{lem:EKS} prove Theorem
\ref{thm:main} for odd primes.  It remains to treat $p=2$.  The only
nonidentity partition of $2$ is $[2]$, and its ramification defect is one.
If the branch datum has $k$ branch points, Riemann--Hurwitz is
\[
 2-2g=4-k,
\]
so $k=2+2g$ is even.  Assign the transposition $\tau=(12)$ to every branch
point.  Then
\[
 \tau^k=(\tau^2)^{k/2}=1,
\]
and the subgroup generated by the assigned permutations is
$\langle\tau\rangle=S_2$, which is transitive on two letters.  The Hurwitz
monodromy criterion, Theorem \ref{thm:hurwitz-criterion}, produces the
required connected cover.  This completes the proof in every prime degree.

\section{Applications to Hurwitz spaces and relative Gromov--Witten theory}
\label{sec:applications}

We first record the consequence of Theorem~\ref{thm:main} for ordinary
connected Hurwitz numbers.  We then place the corresponding Hurwitz loci in
relative stable-map spaces.  The last part of the section uses the
Gromov--Witten/Hurwitz correspondence to obtain a strict positivity theorem
for standard relative descendent invariants.

\subsection{Full support of the connected Hurwitz potential}

For a partition $\Lambda\vdash d$, let $C_\Lambda\subset S_d$ be the
corresponding conjugacy class.  For ordered partitions
$\boldsymbol\Lambda=(\Lambda_1,\ldots,\Lambda_k)$ of $d$, define the
connected Hurwitz number by
\begin{equation}\label{eq:connected-hurwitz-number}
 \begin{split}
 H^\circ_{g,d}(\Lambda_1,\ldots,\Lambda_k)
 =\frac{1}{d!}\#\biggl\{(\sigma_1,\ldots,\sigma_k):\ &
 \sigma_i\in C_{\Lambda_i},\quad \sigma_1\cdots\sigma_k=1,\\[-2pt]
 &\langle\sigma_1,\ldots,\sigma_k\rangle
 \text{ is transitive}\biggr\}.
 \end{split}
\end{equation}
The subscript $g$ means that the Riemann--Hurwitz equality
\begin{equation}\label{eq:application-RH}
 \sum_{i=1}^k\bigl(d-\ell(\Lambda_i)\bigr)=2d-2+2g
\end{equation}
holds; the number is set equal to zero when this equality does not hold.
Formula \eqref{eq:connected-hurwitz-number} is the groupoid cardinality of
the covers with fixed ordered branch values.  Indeed, simultaneous
conjugation by $S_d$ changes the labeling of a generic fiber, and the
stabilizer of a tuple is the automorphism group of the associated cover.

\begin{corollary}[Prime-degree support saturation]
\label{cor:hurwitz-support}
Let $p$ be prime, let $g\geq0$, and let
$\Lambda_1,\ldots,\Lambda_k\vdash p$ be nonidentity partitions.  Then
\begin{equation}\label{eq:hurwitz-support-iff}
 H^\circ_{g,p}(\Lambda_1,\ldots,\Lambda_k)>0
 \quad\Longleftrightarrow\quad
 \sum_{i=1}^k\bigl(p-\ell(\Lambda_i)\bigr)=2p-2+2g.
\end{equation}
\end{corollary}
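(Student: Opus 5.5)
The plan is to prove the two implications of \eqref{eq:hurwitz-support-iff} separately.

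\emph{Forward direction.} Suppose $H^\circ_{g,p}(\Lambda_1,\ldots,\Lambda_k)>0$. Since every contributing tuple is counted with the positive weight $1/p!$, the number is nonzero only if the Riemann--Hurwitz equality \eqref{eq:application-RH} holds. Indeed, by the stated convention the subscript $g$ forces the value zero when \eqref{eq:application-RH} fails. So positivity immediately gives the displayed equality with $d=p$.

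One could instead argue intrinsically. Assume there is a transitive product-one tuple with the given cycle types. Theorem~\ref{thm:hurwitz-criterion} then yields a connected cover, and the Riemann--Hurwitz formula computes its genus. Under the convention adopted in \eqref{eq:connected-hurwitz-number}, however, this is unnecessary. I would record both versions in one sentence.

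\emph{Reverse direction.} Assume the equality holds with $d=p$; this is exactly \eqref{eq:RH-intro}, since $v(\Lambda_i)=p-\ell(\Lambda_i)$. Theorem~\ref{thm:main} then gives a connected branched cover $\Sigma_g\to S^2$ realizing the datum. I would move its branch values to prescribed distinct points $b_1,\ldots,b_k$ by a homeomorphism of the sphere. Applying Theorem~\ref{thm:hurwitz-criterion} in the ``only if'' direction then produces permutations $\sigma_i\in C_{\Lambda_i}$ with $\sigma_1\cdots\sigma_k=1$ and $\langle\sigma_1,\ldots,\sigma_k\rangle$ transitive.

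This tuple lies in the set counted in \eqref{eq:connected-hurwitz-number}. The set is therefore nonempty, so the count is at least $1/p!>0$.

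\emph{Expected obstacle.} There is no real difficulty here, since all the content sits in Theorem~\ref{thm:main}. The only point needing care is the bookkeeping. Theorem~\ref{thm:main} realizes the datum topologically, whereas the count uses the monodromy description with a fixed ordered tuple of branch values. Ordering is harmless because the product-one condition is computed with respect to loops $\gamma_1\cdots\gamma_k=1$ chosen to match the labeling. For $p=2$ the equality forces $k$ to be even, and the explicit transposition tuple treated at the end of the proof of Theorem~\ref{thm:main} already lies in the counted set, so this case is consistent as well.
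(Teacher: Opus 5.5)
Your proposal is correct and is essentially the paper's proof. The forward direction is Riemann--Hurwitz, equivalently the zero convention built into $H^\circ_{g,p}$. The reverse direction combines Theorem~\ref{thm:main} with the Hurwitz monodromy criterion to produce a transitive product-one tuple, and your remarks on relabeling and ordering the branch values are bookkeeping the paper leaves implicit.
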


\begin{proof}
If the Hurwitz number is positive, one of the tuples in
\eqref{eq:connected-hurwitz-number} exists.  The associated cover satisfies
Riemann--Hurwitz, and hence the equality on the right of
\eqref{eq:hurwitz-support-iff}.  Conversely, that equality says precisely
that the ordered family is a compatible branch datum of degree $p$ and
genus $g$.  Theorem~\ref{thm:main} realizes it by a connected cover.  The
Hurwitz monodromy criterion gives a transitive tuple, so the set counted in
\eqref{eq:connected-hurwitz-number} is nonempty.
\end{proof}

The support statement may be expressed as an identity for a generating
series without losing the ordering of the branch values.  For
$\Lambda=(1^{m_1}2^{m_2}\cdots)$, put
\[
 \mathfrak z(\Lambda)=\prod_{a\geq1}a^{m_a}m_a!.
\]
For each branch-value slot $i$, introduce an independent family of
variables $P^{(i)}_\Lambda$, indexed by the nonidentity partitions of $p$,
and set
\begin{equation}\label{eq:prime-hurwitz-potential}
 \mathcal F^\circ_{p,k}
 =\sum_{g\geq0}\ \sum_{\substack{\Lambda_1,\ldots,\Lambda_k\vdash p\\
                                  \Lambda_i\neq1^p}}
 H^\circ_{g,p}(\Lambda_1,\ldots,\Lambda_k)
 u^{2g-2}\prod_{i=1}^k
 \frac{P^{(i)}_{\Lambda_i}}{\mathfrak z(\Lambda_i)}.
\end{equation}
The alphabets are independent because a passport remembers which profile
belongs to which branch value.  Corollary~\ref{cor:hurwitz-support} gives
the exact equality
\begin{equation}\label{eq:potential-support}
 \operatorname{Supp}(\mathcal F^\circ_{p,k})
 =\left\{\begin{array}{c|c}
  (g,\Lambda_1,\ldots,\Lambda_k)&
  \begin{array}{c}
   g\geq0,\quad \Lambda_i\vdash p,\quad \Lambda_i\neq1^p,\\
   \displaystyle\sum_{i=1}^k\bigl(p-\ell(\Lambda_i)\bigr)=2p-2+2g
  \end{array}
 \end{array}\right\},
\end{equation}
and every coefficient on this support is a positive rational number.
Identity partitions may be allowed as dummy unramified slots, provided
they are deleted before the support is recorded.

\subsection{Hurwitz cycles over the moduli of branch values}

Let
\[
 U_k=\operatorname{Conf}^{\mathrm{ord}}_k(\mathbf P^1)
 =\{(b_1,\ldots,b_k)\in(\mathbf P^1)^k:b_i\neq b_j\text{ for }i\neq j\}
\]
be the ordered configuration space.  Let
$\mathcal H^\circ_{\boldsymbol\Lambda}$ be the complex Hurwitz stack of
connected degree-$p$ covers with ordered branch values and with profile
$\Lambda_i$ above the $i$th value.  There is a branch morphism
\[
 \operatorname{br}:\mathcal H^\circ_{\boldsymbol\Lambda}\longrightarrow U_k.
\]
The analytic and algebraic constructions of this morphism, together with
the admissible-cover compactification used below, are reviewed in
\cite[Section~2.5, Proposition~3.2, Theorem~4.11,
Remark~4.15(i), and Section~5]{RomagnyWewers}.  The cited statements are
formulated first for Galois covers; Section~2.5 explains the corresponding
non-Galois and ordered variants used here.
For the compactification in the non-Galois case we use the stable Hurwitz
stack of \cite[Sections~6.3 and~6.6, especially Theorem~6.3.1 and
Proposition~6.6.12]{BertinRomagny}; we take its ordered base change and the
open-and-closed locus with the prescribed local profiles.

\begin{proposition}[Full support over the branch configuration space]
\label{prop:branch-map-full-support}
Assume that $\boldsymbol\Lambda$ satisfies
\eqref{eq:application-RH}.  The coarse branch morphism is finite, \'etale,
and surjective.  The Deligne--Mumford stack morphism
$\operatorname{br}$ is proper with finite geometric fibers and is
analytically an orbifold covering.  Its stack-theoretic weighted degree is
\[
 h^\circ_{\boldsymbol\Lambda}
 =H^\circ_{g,p}(\Lambda_1,\ldots,\Lambda_k)>0,
\]
and, with rational coefficients,
\begin{equation}\label{eq:branch-cycle-pushforward}
 \operatorname{br}_*[\mathcal H^\circ_{\boldsymbol\Lambda}]
 =h^\circ_{\boldsymbol\Lambda}[U_k].
\end{equation}
Here \eqref{eq:branch-cycle-pushforward} is an equality in
$A_k(U_k)_{\mathbf Q}$.  If $k\geq3$, put
\[
 \mathcal H^{\mathrm{red}}_{\boldsymbol\Lambda}
 =\bigl[\mathcal H^\circ_{\boldsymbol\Lambda}/
        \operatorname{PGL}_2\bigr].
\]
Its coarse branch morphism to $M_{0,k}$ is finite and surjective, while the
stack morphism has weighted degree $h^\circ_{\boldsymbol\Lambda}$.
Decompose $\mathcal H^{\mathrm{red}}_{\boldsymbol\Lambda}$ into its
finitely many open-and-closed strata indexed by the non-Galois monodromy
types $\mathbf m=(G,H,\xi)$ that occur.  For every such type and every
irreducible component, take its reduced closure in the corresponding
ordered Bertin--Romagny stable Hurwitz stack and then take the
stack-theoretic normalization.  Define
$\overline{\mathcal H}^{\mathrm{dom}}_{\boldsymbol\Lambda}$ to be the
disjoint union of these normalizations.  Thus all monodromy types occurring
on the open Hurwitz stack are included, and no component disjoint from that
open stack is added.  Then the proper branch morphism
$\overline{\operatorname{br}}$ satisfies
\begin{equation}\label{eq:compactified-branch-cycle}
 \overline{\operatorname{br}}_*
 [\overline{\mathcal H}^{\mathrm{dom}}_{\boldsymbol\Lambda}]
 =h^\circ_{\boldsymbol\Lambda}[\overline M_{0,k}]
 \quad\text{in }A_{k-3}(\overline M_{0,k})_{\mathbf Q}.
\end{equation}
\end{proposition}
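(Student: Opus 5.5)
The proof splits into three layers: the open stack over $U_k$, its $\operatorname{PGL}_2$-quotient over $M_{0,k}$, and the compactification over $\overline M_{0,k}$. Positivity in every layer comes from Corollary~\ref{cor:hurwitz-support}.

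\emph{The open stack.} I will use the topological description of the branch morphism. Fix a base configuration $b\in U_k$ and a base point in its complement. The geometric fiber $\operatorname{br}^{-1}(b)$ is then the groupoid of transitive product-one tuples $(\sigma_1,\ldots,\sigma_k)$ with $\sigma_i\in C_{\Lambda_i}$, modulo simultaneous conjugation by $S_p$. This is Theorem~\ref{thm:hurwitz-criterion}, made functorial through Riemann existence as in the cited Romagny--Wewers statements. Moving $b$ inside $U_k$ acts on this finite set through the pure braid group of the sphere. Hence $\operatorname{br}$ is analytically a covering of orbifolds with finite fibers. Its coarse space is a finite topological covering of $U_k$, and by GAGA/Riemann existence it is finite \'etale. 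A finite \'etale orbifold covering is proper.

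\emph{Degree and the cycle identity.} The weighted degree of $\operatorname{br}$ is the groupoid cardinality of one fiber, $\sum 1/|\operatorname{Aut}|$. By the remark after \eqref{eq:connected-hurwitz-number} this equals $H^\circ_{g,p}(\boldsymbol\Lambda)$. Corollary~\ref{cor:hurwitz-support} makes this number positive under \eqref{eq:application-RH}. Positivity shows every fiber is nonempty, so the coarse map is surjective. Because $U_k$ is irreducible and $\operatorname{br}$ is proper and generically finite of weighted degree $h^\circ_{\boldsymbol\Lambda}$, the definition of proper pushforward of fundamental classes for DM stacks, with rational coefficients, gives \eqref{eq:branch-cycle-pushforward}.

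\emph{The reduced stack.} For $k\ge3$, $\operatorname{PGL}_2$ acts freely on $U_k$ with quotient $M_{0,k}$. It acts compatibly on $\mathcal H^\circ_{\boldsymbol\Lambda}$ by transport of covers, so $\operatorname{br}$ descends to $\mathcal H^{\mathrm{red}}_{\boldsymbol\Lambda}\to M_{0,k}$. This descended map is a base change of the original along a free quotient. Consequently the fibers, weighted degree, finiteness and surjectivity are all unchanged.

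\emph{The compactification.} The stratification by monodromy type $(G,H,\xi)$ is a decomposition into open-and-closed pieces, since the monodromy type is locally constant in families. The Bertin--Romagny stable Hurwitz stack is proper over $\overline M_{0,k}$ (Theorem~6.3.1 there). The reduced closure of each component of the open stack, and the normalization of that closure, is therefore proper over $\overline M_{0,k}$ and contains the original component as a dense open substack. Over $M_{0,k}$ the normalization is an isomorphism onto the open stack, because the open stack is smooth, being \'etale over a smooth base. The pushforward of $[\overline{\mathcal H}^{\mathrm{dom}}_{\boldsymbol\Lambda}]$ is $c[\overline M_{0,k}]$ for some constant $c$, because $\overline M_{0,k}$ is irreducible of dimension $k-3$. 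That constant is the generic weighted degree, which can be computed over $M_{0,k}$ and equals $h^\circ_{\boldsymbol\Lambda}$. This gives \eqref{eq:compactified-branch-cycle}.

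I expect the main obstacle to be this last step. Two things must be checked carefully. First, the closures taken inside the stable Hurwitz stack must pick up no extra boundary components that dominate $\overline M_{0,k}$; these are excluded because every dominant component meets the open locus by construction. Second, the weighted-degree bookkeeping must be compatible with normalization and with the stack structure, including generic automorphism groups. I will handle both by computing degrees only over the dense open $M_{0,k}$, where everything reduces to the counting in \eqref{eq:connected-hurwitz-number}.
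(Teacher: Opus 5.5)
Your proposal is correct and follows essentially the same route as the paper. The Hurwitz monodromy criterion with transport of tuples gives the orbifold covering and identifies the weighted degree with the groupoid count $H^\circ_{g,p}$, which Corollary~\ref{cor:hurwitz-support} makes positive; the $\operatorname{PGL}_2$-quotient then preserves this degree, and the proper pushforward from the normalized closures is computed generically over $M_{0,k}$. Treating the $\operatorname{PGL}_2$ step as a base change along the free quotient $U_k\to M_{0,k}$, and noting that normalization is an isomorphism over the smooth open stack, only spells out what the paper leaves to ``by construction.''
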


\begin{proof}
Fix a point of $U_k$ and a system of peripheral loops.  By
Theorem~\ref{thm:hurwitz-criterion}, the fiber of the branch morphism is the
finite groupoid of transitive tuples of the prescribed cycle types, modulo
simultaneous conjugation.  When the branch values move in a sufficiently
small simply connected open subset of $U_k$, the loops move by isotopy and
the tuple is transported uniquely.  Thus the stack morphism is locally an
orbifold covering, and its coarse morphism is locally a finite covering.
Analytically continuing around a loop in $U_k$ gives the usual braid action
on the finite set of Nielsen tuples.

Corollary~\ref{cor:hurwitz-support} gives at least one tuple.  The same tuple,
together with peripheral loops around any other ordered configuration,
defines a cover by Theorem~\ref{thm:hurwitz-criterion}.  Hence every fiber is
nonempty and the branch morphism is surjective.  Its stack-theoretic degree
is the sum of $1/|\operatorname{Aut}(f)|$ over one fiber, which is exactly
\eqref{eq:connected-hurwitz-number}.  This proves
\eqref{eq:branch-cycle-pushforward}.

For $k\geq3$, an automorphism of $\mathbf P^1$ fixing the ordered branch
configuration is the identity, so quotienting source and target by
$\operatorname{PGL}_2$ preserves the generic stack-theoretic weighted
degree.  By construction,
$\overline{\mathcal H}^{\mathrm{dom}}_{\boldsymbol\Lambda}$ is proper and
generically finite of that degree over $\overline M_{0,k}$.  Proper
pushforward of the stack-theoretic, ordinary non-virtual fundamental class of this
Deligne--Mumford stack therefore gives its generic weighted degree times
the fundamental class of $\overline M_{0,k}$, proving
\eqref{eq:compactified-branch-cycle} in rational Chow theory.
\end{proof}

\subsection{The relative stable-map stratum}

Fix $(b_1,\ldots,b_k)\in U_k$, set $D=b_1+\cdots+b_k$, and write
$\Lambda_i=(e_{i1},\ldots,e_{ir_i})$.  Let
\[
 R=\sum_{i=1}^k r_i.
\]
Consider relative stable maps to $(\mathbf P^1,D)$ of degree $p$ and genus
$g$, with $R$ relative markings whose contact orders over $b_i$ are
$e_{i1},\ldots,e_{ir_i}$.  Expanded relative stable maps and their proper
moduli spaces are constructed in \cite{JunLiStable}; the stationary relative
theory in the notation used below is recalled in
\cite[Section~1.2]{OkounkovPandharipande}.

\begin{proposition}[Interior nonemptiness at expected dimension zero]
\label{prop:relative-interior}
If $p$ is prime and the profiles satisfy
\eqref{eq:application-RH}, then the relative stable-map space has virtual
complex dimension zero and contains a point represented by a finite map
from a smooth connected curve.  If the $k$ target points vary, the
corresponding open Hurwitz locus has dimension $k$; after quotienting by
$\operatorname{PGL}_2$ for $k\geq3$, it has dimension $k-3$ and dominates
$M_{0,k}$.
\end{proposition}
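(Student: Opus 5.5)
The proof has three parts: a dimension computation, a nonemptiness statement drawn from Corollary~\ref{cor:hurwitz-support}, and the dimension count for the moving Hurwitz locus.

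\emph{Virtual dimension.} For relative stable maps to $(\mathbf P^1,D)$ of degree $d$ and genus $g$ with prescribed contact orders, the virtual complex dimension is
\[
 2g-2+2d+\sum_{i=1}^k\bigl(\ell(\Lambda_i)-d\bigr),
\]
as recalled in \cite[Section~1.2]{OkounkovPandharipande} or \cite{JunLiStable}. This is $\dim\overline M_{g,R}$ plus $\int\beta^*c_1(T_{\mathbf P^1}(-\log D))$, minus the contact conditions. Substituting $d=p$ and using \eqref{eq:application-RH} in the form $\sum_i(p-\ell(\Lambda_i))=2p-2+2g$ gives exactly zero.

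\emph{Interior point.} Corollary~\ref{cor:hurwitz-support}, hence Theorem~\ref{thm:main}, supplies a transitive product-one tuple of the prescribed cycle types. Theorem~\ref{thm:hurwitz-criterion} turns it into a connected degree-$p$ branched cover $f\colon\Sigma\to\mathbf P^1(\mathbf C)$ with branch values exactly $b_1,\ldots,b_k$ and profiles $\Lambda_i$. By the Riemann existence theorem, $\Sigma$ is a smooth connected projective curve of genus $g$ and $f$ is algebraic. Mark the $R$ preimage points $f^{-1}(b_i)$, ordering the markings over $b_i$ so that their local degrees are $e_{i1},\ldots,e_{ir_i}$. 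Then $f$ is a relative stable map with unexpanded target and contact orders as prescribed, and it is stable because $f$ is nonconstant. This step is routine once the main theorem is available.

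\emph{Moving configurations.} When the $k$ target points vary, Proposition~\ref{prop:branch-map-full-support} applies. It shows that $\operatorname{br}\colon\mathcal H^\circ_{\boldsymbol\Lambda}\to U_k$ is an orbifold covering with nonempty finite fibers, so the open Hurwitz locus has dimension $\dim U_k=k$. For $k\geq3$, the group $\operatorname{PGL}_2$ acts freely on $U_k$ with quotient $M_{0,k}$, and the covering property descends. The reduced stack therefore has dimension $k-3$ and surjects onto, hence dominates, $M_{0,k}$.

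The main point needing care is the identification of the Hurwitz point with an interior point of Jun Li's space. One must check that an honest finite map with the right contact orders at fixed $D$ is a point of the unexpanded locus, and that its automorphisms as a relative map coincide with cover automorphisms preserving the markings. I would verify this directly from the definitions in \cite{JunLiStable}: with no expansion and no contracted components, the predeformable condition is vacuous, and the contact orders are the local degrees.
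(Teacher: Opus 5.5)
Your proposal is correct and follows the paper's proof. It uses the same virtual-dimension count: your displayed formula $2g-2+2p+R-kp$ agrees with the paper's $(1-g)(1-3)+p(2-k)+R$. It uses Theorem~\ref{thm:main} for the interior point, which you route slightly more carefully than the paper through Theorem~\ref{thm:hurwitz-criterion} and Riemann existence, so that the branch values are exactly the fixed $b_i$. And it uses Proposition~\ref{prop:branch-map-full-support} for the dimensions $k$ and $k-3$ and for dominance.

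There is one small slip in your verbal gloss on the dimension formula. With $T_{\mathbf P^1}(-\log D)$ the contact conditions are already absorbed, so nothing further is subtracted. On the other hand, the term $(1-g)\dim\mathbf P^1$ must be added to $\dim\overline M_{g,R}$. As written, that sentence does not reproduce your displayed formula, although the formula itself is correct.
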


\begin{proof}
The logarithmic tangent bundle has degree
\[
 \deg T_{\mathbf P^1}(-\log D)=2-k.
\]
The expected-dimension formula for maps to the logarithmic curve
$(\mathbf P^1,D)$, with the $R$ relative markings included, gives
\begin{align*}
 \operatorname{vdim}_{\mathbf C}
 &= (1-g)(1-3)+p\deg T_{\mathbf P^1}(-\log D)+R\\
 &=2g-2+p(2-k)+R.
\end{align*}
Since
\[
 \sum_{i=1}^k\bigl(p-r_i\bigr)=kp-R=2p-2+2g,
\]
the displayed virtual dimension is zero.  Theorem~\ref{thm:main} supplies a
connected cover with precisely the required contact orders.  It determines
a point in the open substack where the source is smooth and the stable map
is finite.  Allowing the ordered target points to move adds $k$ parameters,
and quotienting by the three-dimensional group $\operatorname{PGL}_2$
leaves dimension $k-3$.  Dominance is
Proposition~\ref{prop:branch-map-full-support}.
\end{proof}

\subsection{Relative virtual nonvanishing for every passport}

We use the partition convention for relative conditions: relative roots of
equal contact order are not individually labeled.  To make the
normalization explicit, write
\[
 G_{\boldsymbol\Lambda}
 =\prod_{i=1}^k\prod_{a\geq1}S_{m_a(\Lambda_i)},
 \qquad
 A_{\boldsymbol\Lambda}=|G_{\boldsymbol\Lambda}|
 =\prod_{i=1}^k\prod_{a\geq1}m_a(\Lambda_i)!.
\]
If $\overline{\mathcal M}^{\mathrm{rel,lab}}$ denotes Jun Li's stack with
the relative roots individually labeled, set
\[
 \overline{\mathcal M}^{\mathrm{rel,part}}
 =\bigl[\overline{\mathcal M}^{\mathrm{rel,lab}}/
        G_{\boldsymbol\Lambda}\bigr].
\]
Define the connected relative invariant
\begin{equation}\label{eq:relative-invariant-definition}
 \left\langle\Lambda_1,\ldots,\Lambda_k
 \right\rangle^{\circ,\mathbf P^1}_{g,p}
 =\int_{[\overline{\mathcal M}^{\mathrm{rel,part}}_{g}
   (\mathbf P^1,D;p,\boldsymbol\Lambda)]^{\mathrm{vir}}}1.
\end{equation}
The virtual class and the relative degeneration formula are constructed in
\cite{JunLiDegeneration}.  Proposition~\ref{prop:relative-interior} shows
that the integral has virtual dimension zero.

\begin{theorem}[Relative Gromov--Witten nonvanishing]
\label{thm:relative-GW-nonvanishing}
Let $p$ be prime and let
$\boldsymbol\Lambda=(\Lambda_1,\ldots,\Lambda_k)$ be a compatible ordered
family of nonidentity partitions of $p$.  Then
\begin{equation}\label{eq:relative-GW-equals-Hurwitz}
 \left\langle\Lambda_1,\ldots,\Lambda_k
 \right\rangle^{\circ,\mathbf P^1}_{g,p}
 =H^\circ_{g,p}(\Lambda_1,\ldots,\Lambda_k)>0.
\end{equation}
\end{theorem}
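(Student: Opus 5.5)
The plan splits \eqref{eq:relative-GW-equals-Hurwitz} into two independent claims: the equality, which is a cited correspondence plus bookkeeping, and the positivity, which is already Corollary~\ref{cor:hurwitz-support}.

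\textbf{Equality.} We invoke the Gromov--Witten/Hurwitz correspondence of Okounkov--Pandharipande \cite{OkounkovPandharipande} in the special case with no stationary descendent insertions. The relevant statement identifies the relative invariant of $\mathbf P^1$ relative to $k$ points with a Hurwitz number. That Hurwitz number is defined by completed cycles, but with no insertions only the relative conditions $\Lambda_1,\ldots,\Lambda_k$ appear, and these enter as ordinary conjugacy classes rather than completed cycles. So no completion corrections occur.

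Their theorem is usually stated for the disconnected theory. We pass to the connected sector by the standard inclusion--exclusion, that is, by taking the logarithm of the generating function on both sides. This is compatible because connected Hurwitz numbers, defined with transitivity as in \eqref{eq:connected-hurwitz-number}, are exactly the logarithm of the disconnected ones. The connected relative theory is likewise the logarithm of the disconnected one, since the virtual class of a disconnected relative moduli space is the product over components.

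\textbf{Normalization.} We check that the conventions match. We use the partition convention with the quotient by $G_{\boldsymbol\Lambda}$, and the Hurwitz number is normalized by $1/p!$, which makes it the groupoid count. In \cite[Section~1.2]{OkounkovPandharipande} the relative conditions carry exactly the factor $1/|\mathrm{Aut}|$ of the partitions, and their Hurwitz numbers are automorphism-weighted counts. The factor $\mathfrak z(\Lambda)/A_{\boldsymbol\Lambda}$ contributed by contact multiplicities is the one absorbed into their definition. I expect this bookkeeping to be the main obstacle. If a discrepancy appears, it is a product of positive factors, namely $\prod e_{ij}$ and the orders of automorphism groups, and it would not affect positivity.

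A sanity check supports the equality geometrically. By Proposition~\ref{prop:relative-interior} the virtual dimension is zero. At interior points the moduli space is unobstructed and finite, so each honest cover contributes $1/|\mathrm{Aut}|$, which agrees with the Hurwitz count.

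\textbf{Positivity.} The right side of \eqref{eq:relative-GW-equals-Hurwitz} is $H^\circ_{g,p}(\Lambda_1,\ldots,\Lambda_k)$. Since the family is compatible, this is strictly positive by Corollary~\ref{cor:hurwitz-support}, which rests on Theorem~\ref{thm:main}. This completes the argument.
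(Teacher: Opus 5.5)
Your route is the paper's: Theorem~1 of \cite{OkounkovPandharipande} with no stationary insertions, so no completed cycles occur; passage to connected invariants by taking logarithms; and positivity from Corollary~\ref{cor:hurwitz-support}. The positivity half is fine. Two steps in the equality half need repair.

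First, your normalization paragraph never commits to an equality. You say a contact-order factor $\prod_i\mathfrak z(\Lambda_i)/A_{\boldsymbol\Lambda}=\prod_{i,j}e_{ij}$ is ``absorbed'' into the definition, but there is no such discrepancy to absorb. Your fallback, that any discrepancy is a positive factor, would give only nonvanishing, not \eqref{eq:relative-GW-equals-Hurwitz}. The comparison is exact. The relative conditions of \cite{OkounkovPandharipande} carry precisely the factor $1/\prod_i|\mathrm{Aut}(\Lambda_i)|=1/A_{\boldsymbol\Lambda}$, which is the $G_{\boldsymbol\Lambda}$-quotient built into \eqref{eq:relative-invariant-definition}. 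Their Hurwitz numbers are the $1/p!$ groupoid counts of \eqref{eq:connected-hurwitz-number}. Contact-order products enter only as gluing factors in degeneration formulas, never in this comparison.

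Second, you justify the logarithm step by saying the virtual class of a disconnected relative moduli space is the product over components. That is false as a statement about Jun Li's spaces. All components share one expanded target, so one component may be a union of trivial cylinders over a bubble on which another component is nontrivial. The disconnected space is therefore not a product of connected ones. The paper instead invokes the connected/disconnected exponential identity of the relative virtual theory, \cite[equations~(4.1)--(4.2)]{OkounkovPandharipande}, set up as follows:
\begin{itemize}
\item a separate alphabet $\mathbf x^{(i)}$ for each relative point, so the $k$ profiles are not merged;
\item identity partitions admitted in the disconnected series, since a component can be unramified over a relative point;
\item the disconnected arithmetic genus, with $2g-2$ additive under disjoint union.
\end{itemize}
Your interior ``sanity check'' cannot substitute for this. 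Boundary strata with expanded targets or contracted components may carry virtual contributions, so the equality is a global consequence of the cited correspondence, not a count of honest covers. With these two repairs your argument is the paper's proof.
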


\begin{proof}
Okounkov--Pandharipande define the Gromov--Witten theory of a smooth target
curve relative to an arbitrary finite set of target points, with one
partition of the degree prescribed at each point
\cite[Section~1.2]{OkounkovPandharipande}.  Their relative
Gromov--Witten/Hurwitz correspondence is
\begin{multline}\label{eq:OP-relative-correspondence}
 \left\langle
   \prod_{a=1}^{n}\tau_{m_a}(\omega),
   \eta_1,\ldots,\eta_s
 \right\rangle^{\bullet X}_{d}\\
 =\frac1{\prod_{a=1}^{n}m_a!}
 H^{\bullet X}_{d}
 \left(\overline{(m_1+1)},\ldots,\overline{(m_n+1)},
       \eta_1,\ldots,\eta_s\right),
\end{multline}
where $\bullet$ denotes the disconnected theory
\cite[Theorem~1]{OkounkovPandharipande}.  Set $n=0$, take
$X=\mathbf P^1$, $d=p$, and $\eta_i=\Lambda_i$.  No completed cycle occurs,
so \eqref{eq:OP-relative-correspondence} is the equality of the
disconnected relative invariant and the disconnected Hurwitz number with
the prescribed profiles.

To extract the connected coefficient without merging the $k$ relative
conditions, introduce a separate alphabet
$\mathbf x^{(i)}=(x^{(i)}_1,x^{(i)}_2,\ldots)$ for every relative target
point and write $x^{(i)}_\Lambda=\prod_jx^{(i)}_{\lambda_j}$.  In the
unlabeled partition convention set
\begin{align*}
 Z^\bullet_{\mathrm{rel}}
 &=1+\sum_{d>0}q^d
   \sum_{\Lambda_1,\ldots,\Lambda_k\vdash d}
   \left(\prod_{i=1}^k x^{(i)}_{\Lambda_i}\right)
   \left\langle\Lambda_1,\ldots,\Lambda_k
   \right\rangle^{\bullet,\mathbf P^1}_{d},\\
 Z^\bullet_{\mathrm H}
 &=1+\sum_{d>0}q^d
   \sum_{\Lambda_1,\ldots,\Lambda_k\vdash d}
   \left(\prod_{i=1}^k x^{(i)}_{\Lambda_i}\right)
   H^\bullet_d(\Lambda_1,\ldots,\Lambda_k).
\end{align*}
The specialization $n=0$ of Theorem~1 gives
$Z^\bullet_{\mathrm{rel}}=Z^\bullet_{\mathrm H}$ in exactly these
partition variables.  The sums include identity partitions: a connected
component of a disconnected cover can be unramified over some relative
target point.  If $C=\coprod_{\alpha=1}^cC_\alpha$, the disconnected
arithmetic-genus convention of
\cite[Section~0.2]{OkounkovPandharipande} gives
\[
 2g(C)-2=\sum_{\alpha=1}^c\bigl(2g(C_\alpha)-2\bigr).
\]
Thus the genus selected by the zero-dimensional constraint is compatible
with disjoint union.

We now use the standard connected/disconnected exponential identity in
relative Gromov--Witten theory.  In the two-relative-point notation of
\cite[equations~(4.1)--(4.2)]{OkounkovPandharipande}, it states that the
free energy of connected relative invariants is the logarithm of the
disconnected partition function.  The same identity applies to any finite
set of relative target points: disjoint union concatenates the partition
at each target point, and the independent alphabets record those
concatenations separately.  This use of the exponential identity does not
assert that a fixed expanded-target moduli stack is literally a Cartesian
product of connected moduli stacks; it is the connected/disconnected
identity for the relative virtual theory used in the cited correspondence.
On the Hurwitz side the ordinary groupoid exponential formula gives the
same statement, including the quotient weights for repeated connected
components.  Consequently
\begin{align*}
 [q^p\prod_i x^{(i)}_{\Lambda_i}]
 \log Z^\bullet_{\mathrm{rel}}
 &=\left\langle\Lambda_1,\ldots,\Lambda_k
   \right\rangle^{\circ,\mathbf P^1}_{g,p},\\
 [q^p\prod_i x^{(i)}_{\Lambda_i}]
 \log Z^\bullet_{\mathrm H}
 &=H^\circ_{g,p}(\Lambda_1,\ldots,\Lambda_k).
\end{align*}
Equality of the two logarithms proves
\eqref{eq:relative-GW-equals-Hurwitz}; no additional
$\mathfrak z(\Lambda_i)$ or automorphism factor occurs because both sides
already use the same groupoid and partition normalization.
Corollary~\ref{cor:hurwitz-support} makes the common value strictly
positive.
\end{proof}

With individually labeled relative roots, the exact conversion is
\begin{equation}\label{eq:labeled-relative-conversion}
 \int_{[\overline{\mathcal M}^{\mathrm{rel,lab}}]^{\mathrm{vir}}}1
 =A_{\boldsymbol\Lambda}
  \left\langle\Lambda_1,\ldots,\Lambda_k
  \right\rangle^{\circ,\mathbf P^1}_{g,p}
 =A_{\boldsymbol\Lambda}
  H^\circ_{g,p}(\Lambda_1,\ldots,\Lambda_k).
\end{equation}
Indeed, the quotient map
\[
 q:\overline{\mathcal M}^{\mathrm{rel,lab}}
 \longrightarrow\overline{\mathcal M}^{\mathrm{rel,part}}
\]
is a finite \'etale $G_{\boldsymbol\Lambda}$-torsor of degree
$A_{\boldsymbol\Lambda}$.  The quotient obstruction theory pulls back
along $q$, and therefore
\[
 q_*[\overline{\mathcal M}^{\mathrm{rel,lab}}]^{\mathrm{vir}}
 =A_{\boldsymbol\Lambda}
 [\overline{\mathcal M}^{\mathrm{rel,part}}]^{\mathrm{vir}}.
\]
There is no additional factor $\prod e_{ij}$ in
\eqref{eq:labeled-relative-conversion}; contact-order products enter the
gluing factors in degeneration formulas, not the passage from labeled
roots to the partition convention.  Since
$A_{\boldsymbol\Lambda}>0$, nonvanishing is independent of the labeling
convention.

Theorem~\ref{thm:relative-GW-nonvanishing} uses a comparison theorem, not
only nonemptiness of the open Hurwitz locus.  Nodal sources, contracted
components, expanded targets, and rubber components can occur on the
boundary of the relative compactification.  The theorem does not assert
that this boundary is empty, that its individual contributions vanish, or
that the virtual class equals the ordinary fundamental class of the
Hurwitz-space closure.  It asserts the equality of their total virtual
degree with the weighted Hurwitz count.

\subsection{Completed cycles and a positive disconnected stationary sector}

Let $\omega\in H^2(\mathbf P^1,\mathbf Q)$ be the point class.  For
$\lambda,\mu\vdash p$, write
\[
 \left\langle\lambda\ \middle|\
   \prod_{i=1}^n\tau_{r_i-1}(\omega)\
   \middle|\ \mu\right\rangle^{\bullet,\mathbf P^1}_{g,p}
\]
for the possibly disconnected degree-$p$ stationary invariant relative to
$0$ and $\infty$, with relative profiles $\lambda$ and $\mu$.  Here $g$ is
the arithmetic genus: if the source has connected components of genera
$g_1,\ldots,g_c$, then
\[
 g=\sum_{j=1}^c g_j-c+1,
\]
so $g$ may be negative.  For partitions
$\Theta_1,\ldots,\Theta_s\vdash p$ and an integer $h$, define
\begin{equation}\label{eq:disconnected-hurwitz-number}
 H^\bullet_{h,p}(\Theta_1,\ldots,\Theta_s)
 =\frac1{p!}\#\left\{(\sigma_1,\ldots,\sigma_s):
  \sigma_i\in C_{\Theta_i},\quad
  \sigma_1\cdots\sigma_s=1\right\},
\end{equation}
provided
\begin{equation}\label{eq:disconnected-hurwitz-RH}
 \sum_{i=1}^s\bigl(p-\ell(\Theta_i)\bigr)=2p-2+2h;
\end{equation}
otherwise set it equal to zero.  No transitivity condition is imposed.
If the associated cover has connected components $C_1,\ldots,C_c$, then
$h=\sum_jg(C_j)-c+1$, so negative $h$ is allowed.  Every number in
\eqref{eq:disconnected-hurwitz-number} is nonnegative and, for $h\geq0$,
\begin{equation}\label{eq:connected-contained-in-disconnected}
 H^\bullet_{h,p}(\Theta_1,\ldots,\Theta_s)
 \geq H^\circ_{h,p}(\Theta_1,\ldots,\Theta_s).
\end{equation}

We recall the exact form of the completed-cycle expansion needed for the
positivity argument.  For a partition $\eta$, possibly empty, put
\[
 \widetilde\eta^{\,(p)}=\eta\cup1^{p-|\eta|},\qquad
 B_p(\eta)=
 \binom{m_1(\eta)+p-|\eta|}{m_1(\eta)}
 \quad (|\eta|\leq p).
\]
Put $B_p(\eta)=0$ when $|\eta|>p$.  In particular,
$B_p(\varnothing)=1$.
The factor $B_p(\eta)$ is the multiplicity in the extended Hurwitz
convention of \cite[equation~(0.5)]{OkounkovPandharipande}.  Define
\[
 \mathcal S(z)=\frac{\sinh(z/2)}{z/2}.
\]
The completed $r$-cycle is
\begin{equation}\label{eq:completed-cycle-expansion}
 \overline{(r)}=\sum_\eta\rho_{r,\eta}(\eta),
\end{equation}
where
\begin{equation}\label{eq:completion-coefficient}
 \rho_{r,\eta}
 =(r-1)!\frac{\prod_j\eta_j}{|\eta|!}
 [z^{\,r+1-|\eta|-\ell(\eta)}]
 \mathcal S(z)^{|\eta|-1}\prod_j\mathcal S(\eta_jz).
\end{equation}
For the empty partition the empty products are one, so
\begin{equation}\label{eq:empty-completion-coefficient}
 \rho_{r,\varnothing}
 =(r-1)![z^{r+1}]\frac1{\mathcal S(z)}.
\end{equation}
These are equations (0.21)--(0.22) of
\cite{OkounkovPandharipande}.  Since $\mathcal S(z)$ has nonnegative
coefficients, \eqref{eq:completion-coefficient} implies
\begin{equation}\label{eq:completion-positivity}
 \rho_{r,\eta}\geq0\quad(\eta\neq\varnothing),
 \qquad \rho_{r,(r)}=1.
\end{equation}
Moreover, $1/\mathcal S(z)$ is an even series.  Hence
\begin{equation}\label{eq:even-empty-vanishing}
 r\text{ even}\quad\Longrightarrow\quad
 \rho_{r,\varnothing}=0.
\end{equation}

For completeness, we spell out the arithmetic-genus grading in the
disconnected correspondence.  If $\rho_{r_i,\eta_i}\neq0$, set
\begin{equation}\label{eq:completion-genus-loss}
 a_i=\frac{r_i+1-|\eta_i|-\ell(\eta_i)}2,
 \qquad
 h(\boldsymbol\eta)
 =g-\sum_{i=1}^n\bigl(a_i+\ell(\eta_i)-1\bigr).
\end{equation}
The vanishing condition implicit in
\eqref{eq:completion-coefficient} makes $a_i$ a nonnegative integer.  The
integer $h(\boldsymbol\eta)$ need not be nonnegative because it is the
arithmetic genus of a possibly disconnected source.  A direct calculation
gives
\begin{equation}\label{eq:completion-defect-identity}
 p-\ell(\widetilde\eta_i^{\,(p)})
 =|\eta_i|-\ell(\eta_i)
 =r_i-1-2\bigl(a_i+\ell(\eta_i)-1\bigr).
\end{equation}
Thus a term indexed by $\boldsymbol\eta$ has covering genus
$h(\boldsymbol\eta)$.

\begin{proposition}[Disconnected completed-cycle expansion]
\label{prop:disconnected-completed-cycle-expansion}
Suppose
\begin{equation}\label{eq:relative-stationary-dimension}
 (p-\ell(\lambda))+(p-\ell(\mu))
 +\sum_{i=1}^n(r_i-1)=2p-2+2g.
\end{equation}
Then the disconnected relative Gromov--Witten/Hurwitz correspondence is
\begin{align}
&\left\langle\lambda\ \middle|\
   \prod_{i=1}^n\tau_{r_i-1}(\omega)\
   \middle|\ \mu\right\rangle^{\bullet,\mathbf P^1}_{g,p}
\label{eq:disconnected-GWH-expanded}\\
&\quad=
 \frac1{\prod_{i=1}^n(r_i-1)!}
 \sum_{\substack{\eta_1,\ldots,\eta_n\\|\eta_i|\leq p}}
 \left(\prod_{i=1}^n\rho_{r_i,\eta_i}B_p(\eta_i)\right)
 H^\bullet_{h(\boldsymbol\eta),p}
 \left(\lambda,\mu,
       \widetilde\eta_1^{\,(p)},\ldots,
       \widetilde\eta_n^{\,(p)}\right).
\nonumber
\end{align}
An identity profile $1^p$ in the Hurwitz number is a dummy unramified
condition and is deleted.  Equation
\eqref{eq:completion-defect-identity} verifies directly that each ordinary
Hurwitz number on the right has the arithmetic genus displayed in
\eqref{eq:disconnected-GWH-expanded}.  It also prevents the
completed-cycle genus grading from being confused with the genus of every
individual ordinary-cover term.
\end{proposition}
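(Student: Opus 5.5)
The plan is to specialize the Okounkov--Pandharipande correspondence \eqref{eq:OP-relative-correspondence} to $X=\mathbf P^1$, $d=p$, two relative conditions $\eta_1=\lambda$ over $0$ and $\eta_2=\mu$ over $\infty$, and stationary insertions $\tau_{r_i-1}(\omega)$, so that $m_i=r_i-1$. This gives the prefactor $1/\prod_i(r_i-1)!$ times the disconnected Hurwitz number $H^{\bullet}_p(\lambda,\mu,\overline{(r_1)},\ldots,\overline{(r_n)})$. Everything then reduces to unpacking this completed-cycle Hurwitz number by multilinearity.

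For the unpacking, I would substitute the expansion \eqref{eq:completed-cycle-expansion} into each completed-cycle slot and expand multilinearly, producing a sum over tuples $(\eta_1,\ldots,\eta_n)$ with coefficient $\prod_i\rho_{r_i,\eta_i}$. The meaning of a Hurwitz number with a slot indexed by a partition $\eta$ with $|\eta|\leq p$ is the extended convention of \cite[equation~(0.5)]{OkounkovPandharipande}. There, $(\eta)$ acts in the class algebra as the sum over all ways of choosing $|\eta|$ letters and placing the permutation of type $\eta$ on them. This equals $B_p(\eta)$ times the class sum of $\widetilde\eta^{\,(p)}$, since a permutation of type $\widetilde\eta^{\,(p)}$ has $m_1(\eta)+p-|\eta|$ fixed points, of which $m_1(\eta)$ must be designated as part of $\eta$. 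Slots with $|\eta|>p$ contribute zero, which matches $B_p(\eta)=0$. The empty partition gives the identity class with $B_p(\varnothing)=1$, and an identity profile $1^p$ is just a dummy unramified point that can be deleted without changing the count in \eqref{eq:disconnected-hurwitz-number}.

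For the genus bookkeeping, each resulting term is an ordinary disconnected Hurwitz number with profiles $\lambda,\mu,\widetilde\eta_i^{\,(p)}$. Its arithmetic genus is forced by \eqref{eq:disconnected-hurwitz-RH}. Summing the defects, I would replace $\sum_i(r_i-1)$ in \eqref{eq:relative-stationary-dimension} using \eqref{eq:completion-defect-identity}. This yields total defect $2p-2+2g-2\sum_i(a_i+\ell(\eta_i)-1)=2p-2+2h(\boldsymbol\eta)$, so the subscript is exactly $h(\boldsymbol\eta)$ from \eqref{eq:completion-genus-loss}. The identity \eqref{eq:completion-defect-identity} itself is immediate from the definition of $a_i$, and $a_i\in\mathbf Z_{\geq0}$ because $\mathcal S$ is even, so only even-degree coefficients survive.

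The main obstacle I expect is matching normalizations with Okounkov--Pandharipande. Their disconnected Hurwitz numbers, the $1/d!$ factor, and the genus convention for disconnected sources must agree with \eqref{eq:disconnected-hurwitz-number}. The extended-cycle factor $B_p$ must be applied exactly once rather than absorbed into $\rho$. I would check this against \cite[equations~(0.5),(0.21)--(0.22)]{OkounkovPandharipande} directly, and sanity-test the result on $r=2$, where $\overline{(2)}=(2)$ and the formula must reduce to the classical count with $B_p((2))=1$.
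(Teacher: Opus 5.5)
Your proposal is correct and follows essentially the same route as the paper's proof. Both specialize Theorem~1 of \cite{OkounkovPandharipande} and expand each completed cycle multilinearly. Both then use equation~(0.5) to turn each extended condition into $B_p(\eta_i)$ times the padded profile $\widetilde\eta_i^{\,(p)}$, with terms having $|\eta_i|>p$ vanishing. Finally, both recover $h(\boldsymbol\eta)$ by comparing Riemann--Hurwitz for the ordinary disconnected cover with \eqref{eq:relative-stationary-dimension} via \eqref{eq:completion-defect-identity}. Your partial-permutation explanation of $B_p$ and your parity argument for $a_i\in\mathbf Z_{\geq0}$ only make explicit what the paper cites or states in passing.
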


\begin{proof}
Theorem~1 of \cite{OkounkovPandharipande} gives
\[
 \left\langle\lambda\ \middle|\
   \prod_i\tau_{r_i-1}(\omega)\
   \middle|\ \mu\right\rangle^{\bullet,\mathbf P^1}_{p}
 =\frac1{\prod_i(r_i-1)!}
 H^{\bullet,\mathbf P^1}_{p}
 \bigl(\lambda,\mu,
       \overline{(r_1)},\ldots,\overline{(r_n)}\bigr).
\]
Expanding every completed cycle by
\eqref{eq:completed-cycle-expansion} gives a multilinear sum of ordinary
ramification conditions.  Equation~(0.5) of the cited paper replaces a
condition $\eta_i$ of size at most $p$ by
$\widetilde\eta_i^{\,(p)}$ and multiplies its ordinary Hurwitz number by
$B_p(\eta_i)$.  Conditions of size greater than $p$ vanish.  This proves
all coefficients in \eqref{eq:disconnected-GWH-expanded}.

It remains to identify the arithmetic genus.  Riemann--Hurwitz for a
possibly disconnected degree-$p$ cover gives
\[
 2h-2+2p
 =(p-\ell(\lambda))+(p-\ell(\mu))
  +\sum_i\bigl(|\eta_i|-\ell(\eta_i)\bigr).
\]
Subtract this equality from
\eqref{eq:relative-stationary-dimension} and use
\eqref{eq:completion-defect-identity}.  The result is
$h=h(\boldsymbol\eta)$.  Because the source may be disconnected, this
arithmetic genus can be negative and no such term may be discarded.
\end{proof}

\begin{theorem}[Disconnected positivity for even completed cycles]
\label{thm:GW-positive-sector}
Let $p$ be prime, let $g\geq0$, let $\lambda,\mu\vdash p$, and let
$r_1,\ldots,r_n$ be even integers with $2\leq r_i\leq p$.  Suppose
\eqref{eq:relative-stationary-dimension} holds.  Then
\begin{equation}\label{eq:GW-strict-positive}
 \left\langle\lambda\ \middle|\
   \prod_{i=1}^n\tau_{r_i-1}(\omega)\
   \middle|\ \mu\right\rangle^{\bullet,\mathbf P^1}_{g,p}>0.
\end{equation}
More precisely,
\begin{align}
&\left\langle\lambda\ \middle|\
   \prod_{i=1}^n\tau_{r_i-1}(\omega)\
   \middle|\ \mu\right\rangle^{\bullet,\mathbf P^1}_{g,p}
\label{eq:GW-leading-lower-bound}\\
&\quad\geq
 \frac1{\prod_i(r_i-1)!}
 H^\circ_{g,p}
 \left(\lambda,\mu,
 (r_1,1^{p-r_1}),\ldots,(r_n,1^{p-r_n})\right)>0,
\nonumber
\end{align}
where identity profiles on the right are deleted.
\end{theorem}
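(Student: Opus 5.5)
The plan is to start from the expansion of Proposition~\ref{prop:disconnected-completed-cycle-expansion} and show that every summand on the right of \eqref{eq:disconnected-GWH-expanded} is nonnegative. We then isolate one summand, the leading term $\boldsymbol\eta=((r_1),\ldots,(r_n))$, and show that it is bounded below by the displayed connected Hurwitz number, which is positive by Corollary~\ref{cor:hurwitz-support}.

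\emph{Nonnegativity.} Since every $r_i$ is even, \eqref{eq:even-empty-vanishing} gives $\rho_{r_i,\varnothing}=0$. The empty partition therefore never contributes, even though $B_p(\varnothing)=1$. For $\eta_i\neq\varnothing$, the bound \eqref{eq:completion-positivity} gives $\rho_{r_i,\eta_i}\geq0$. The factor $B_p(\eta_i)$ is a binomial coefficient, or zero, and so is nonnegative. Every disconnected Hurwitz number \eqref{eq:disconnected-hurwitz-number} is a nonnegative count, including those of negative arithmetic genus $h(\boldsymbol\eta)$. Hence each summand is at least $0$. This is precisely where evenness is needed: for odd $r$ the coefficient $\rho_{r,\varnothing}$ can be negative.

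\emph{The leading term.} Take $\eta_i=(r_i)$; this is allowed because $r_i\leq p$. Then $\rho_{r_i,(r_i)}=1$ by \eqref{eq:completion-positivity}. Since $r_i\geq2$, we have $m_1((r_i))=0$, so $B_p((r_i))=\binom{p-r_i}{0}=1$. Moreover $\widetilde\eta_i^{\,(p)}=(r_i,1^{p-r_i})$. In \eqref{eq:completion-genus-loss} we get $a_i=0$ and $\ell(\eta_i)=1$, so $h(\boldsymbol\eta)=g$. This summand therefore equals
\[
 \frac1{\prod_i(r_i-1)!}H^\bullet_{g,p}\bigl(\lambda,\mu,(r_1,1^{p-r_1}),\ldots,(r_n,1^{p-r_n})\bigr).
\]
By \eqref{eq:connected-contained-in-disconnected}, which applies because $g\geq0$, this is at least the corresponding connected number. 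Dropping the remaining nonnegative summands gives \eqref{eq:GW-leading-lower-bound}.

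\emph{Strict positivity.} Delete the identity profiles among $\lambda,\mu$; the other profiles $(r_i,1^{p-r_i})$ are never identity because $r_i\geq2$. The defect of $(r_i,1^{p-r_i})$ is $r_i-1$, so \eqref{eq:relative-stationary-dimension} becomes exactly the Riemann--Hurwitz equality \eqref{eq:application-RH} in genus $g\geq0$ for the remaining nonidentity prime-degree profiles. Corollary~\ref{cor:hurwitz-support} then gives positivity. One degenerate case needs care: the list of remaining profiles could be empty. But then the left side of \eqref{eq:relative-stationary-dimension} would be $0$, while the right side is $2p-2+2g>0$, which is impossible.

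No step is deep. The point needing the most care is the sign bookkeeping: negative-genus disconnected terms must be kept rather than discarded, and the only possibly negative coefficient, $\rho_{r,\varnothing}$, must be shown to vanish, which evenness guarantees.
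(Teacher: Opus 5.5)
Your proposal is correct and follows the paper's proof step for step. Both arguments show that every summand of the completed-cycle expansion is nonnegative, because evenness kills $\rho_{r_i,\varnothing}$. Both then isolate the principal term $\eta_i=(r_i)$ and bound it below by the connected Hurwitz number via \eqref{eq:connected-contained-in-disconnected}. The only differences are cosmetic: you cite Corollary~\ref{cor:hurwitz-support} where the paper cites Theorem~\ref{thm:main} directly, and you explicitly rule out the empty-passport case, which the paper leaves implicit.
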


\begin{proof}
Condition \eqref{eq:relative-stationary-dimension} is Riemann--Hurwitz for
the principal passport
\begin{equation}\label{eq:principal-GW-passport}
 \lambda,\quad\mu,\quad
 (r_1,1^{p-r_1}),\ldots,(r_n,1^{p-r_n}).
\end{equation}
Indeed, the defect of $(r_i,1^{p-r_i})$ is $r_i-1$.  Delete $\lambda$ or
$\mu$ from the list if it is $1^p$.  The remaining profiles are a compatible
prime-degree branch datum, and Theorem~\ref{thm:main} gives
\begin{equation}\label{eq:principal-Hurwitz-positive}
 H^\circ_{g,p}
 \left(\lambda,\mu,
 (r_1,1^{p-r_1}),\ldots,(r_n,1^{p-r_n})\right)>0.
\end{equation}

Expand the left side of \eqref{eq:GW-strict-positive} by
\eqref{eq:disconnected-GWH-expanded}.  Because every $r_i$ is even,
\eqref{eq:even-empty-vanishing} removes every term in which some
$\eta_i$ is empty.  All remaining completion coefficients are nonnegative
by \eqref{eq:completion-positivity}; the factors $B_p(\eta_i)$ are positive
integers, and ordinary disconnected Hurwitz numbers are nonnegative
groupoid cardinalities.  Consequently every summand is nonnegative.

Choose $\eta_i=(r_i)$ for every $i$.  Then $a_i=0$,
$\ell(\eta_i)=1$, $h(\boldsymbol\eta)=g$,
$B_p((r_i))=1$, and $\rho_{r_i,(r_i)}=1$.  The corresponding summand is the
disconnected Hurwitz number for the principal passport divided by
$\prod_i(r_i-1)!$.  It is at least the positive connected contribution in
\eqref{eq:principal-Hurwitz-positive}, by
\eqref{eq:connected-contained-in-disconnected}.  This proves
\eqref{eq:GW-leading-lower-bound}, and hence strict positivity.
\end{proof}

Proposition~\ref{prop:disconnected-completed-cycle-expansion} is not a
termwise formula for connected invariants.  Taking the logarithm of the
full disconnected partition function produces connected cumulants.  A
correction profile $\eta_i$ can meet several connected components of an
ordinary Hurwitz cover, while the corresponding relative component can
join those components.  Thus the logarithm is a sum over connected
incidence graphs; it cannot be obtained by replacing $H^\bullet$ by
$H^\circ$ in \eqref{eq:disconnected-GWH-expanded}.

The case of simple stationary insertions contains no completion correction
at all.

\begin{corollary}[The transposition sector]
\label{cor:tau-one-positive}
Let $p$ be prime, let $g\geq0$, and let $\lambda,\mu\vdash p$.  If
\[
 N=2g-2+\ell(\lambda)+\ell(\mu)\geq0,
\]
then
\begin{equation}\label{eq:tau-one-Hurwitz}
 \left\langle\lambda\ \middle|\
   \tau_1(\omega)^N\
   \middle|\ \mu\right\rangle^{\circ,\mathbf P^1}_{g,p}
 =H^\circ_{g,p}
 \left(\lambda,\mu,(2,1^{p-2})^N\right)>0.
\end{equation}
\end{corollary}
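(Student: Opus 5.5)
The plan has three parts: the disconnected identity for \(\tau_1\), the passage to connected invariants by taking logarithms, and positivity from Theorem~\ref{thm:main}.

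\emph{Disconnected identity.} Specialize Proposition~\ref{prop:disconnected-completed-cycle-expansion} to \(r_1=\cdots=r_N=2\). For \(r=2\), formula \eqref{eq:completion-coefficient} gives
\[
 \rho_{2,(2)}=1,\qquad \rho_{2,\varnothing}=0
\]
by \eqref{eq:even-empty-vanishing}. For \(\eta=(1)\) the relevant coefficient is the \(z^{1}\)-coefficient of an even series, hence zero. For \(\eta=(1,1)\) we need \([z^{-1}]\), which is zero. No other \(\eta\) fits, since the exponent \(r+1-|\eta|-\ell(\eta)\) must be a nonnegative even integer. So \(\overline{(2)}=(2)\) exactly, and the prefactor \(1/\prod(r_i-1)!\) equals \(1\). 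The disconnected correspondence therefore reads
\[
 \langle\lambda\mid\tau_1(\omega)^N\mid\mu\rangle^\bullet
 =H^\bullet\bigl(\lambda,\mu,(2,1^{p-2})^N\bigr)
\]
in each arithmetic genus. The dimension constraint \eqref{eq:relative-stationary-dimension} with \(r_i=2\) is exactly \(N=2g-2+\ell(\lambda)+\ell(\mu)\).

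\emph{Passage to connected invariants.} Package both sides into generating functions. Use separate alphabets for \(\lambda\) and \(\mu\), the degree variable \(q\), a genus variable \(u\), and a variable \(y\) with \(y^N/N!\) recording the \(N\) identical stationary insertions. Take logarithms as in the proof of Theorem~\ref{thm:relative-GW-nonvanishing}. On the Gromov--Witten side, the connected/disconnected exponential identity of \cite[equations~(4.1)--(4.2)]{OkounkovPandharipande} makes the logarithm the connected invariants. On the Hurwitz side, the groupoid exponential formula splits the disconnected count over connected components. Each transposition lies in exactly one component's monodromy, which is the usual labeled-set exponential in the \(y\)-variable. Since the two disconnected series agree coefficientwise, so do their logarithms, and the connected equality \eqref{eq:tau-one-Hurwitz} follows.

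The main obstacle is this logarithm step. One has to check that the marked-insertion weights \(y^N/N!\) match on both sides. On the Hurwitz side the branch points of simple branching are ordered, so the transpositions are distributed among components as a set partition of the labeled insertions. This is consistent with the ordered passport convention of \eqref{eq:connected-hurwitz-number}. The point to verify is that no cross-component correction appears; that holds precisely because \(\overline{(2)}\) has no lower-order terms.

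\emph{Positivity.} The passport \(\lambda,\mu,(2,1^{p-2})^N\), after deleting identity profiles, satisfies Riemann--Hurwitz with genus \(g\). Corollary~\ref{cor:hurwitz-support}, which rests on Theorem~\ref{thm:main}, then gives \(H^\circ_{g,p}>0\). The edge case in which everything is deleted cannot occur: the constraint with \(g\ge0\) forces positive total branching.
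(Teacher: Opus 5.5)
Your proposal is correct and follows essentially the same route as the paper: $\overline{(2)}=(2)$ in Theorem~1 of \cite{OkounkovPandharipande} gives the disconnected identity, and the exponential formula with labeled insertion weights $x^m/m!$ on both sides passes to connected invariants. Theorem~\ref{thm:main} then gives positivity. Two small differences from the paper. You verify $\overline{(2)}=(2)$ directly from \eqref{eq:completion-coefficient}, where the paper cites it. You also route the disconnected identity through Proposition~\ref{prop:disconnected-completed-cycle-expansion}, which is stated only in degree $p$; because the degree-$p$ coefficient of the logarithm involves the disconnected coefficients in every degree $d\leq p$, you should run the same argument (Theorem~1 with $\overline{(2)}=(2)$) in all degrees, as the paper does.
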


\begin{proof}
Here \eqref{eq:relative-stationary-dimension} is exactly the definition of
$N$.  Okounkov--Pandharipande note that the transposition is the unique
cycle requiring no completion: $\overline{(2)}=(2)$.  We spell out the
connected extraction so that no termwise connected completed-cycle formula
is being used.  Introduce variables
\[
 q,\quad x,\quad t_1,t_2,\ldots,\quad s_1,s_2,\ldots,
\]
and write $t_\alpha=\prod_jt_{\alpha_j}$ and
$s_\beta=\prod_js_{\beta_j}$, and form
\begin{align*}
 Z^\bullet_{\mathrm{GW}}
 &=1+\sum_{d>0}q^d\sum_{\alpha,\beta\vdash d}
   t_\alpha s_\beta\sum_{m\geq0}\frac{x^m}{m!}
   \left\langle\alpha\ \middle|\tau_1(\omega)^m
   \middle|\beta\right\rangle^{\bullet,\mathbf P^1}_{d},\\
 Z^\bullet_{\mathrm H}
 &=1+\sum_{d>0}q^d\sum_{\alpha,\beta\vdash d}
   t_\alpha s_\beta\sum_{m\geq0}\frac{x^m}{m!}
   H^{\bullet,\mathrm{ext}}_d\bigl(\alpha,\beta,(2)^m\bigr).
\end{align*}
Here the arithmetic genus in every coefficient is the one forced by
Riemann--Hurwitz.  The superscript $\mathrm{ext}$ denotes the extended
Hurwitz convention of equation~(0.5) in the cited paper: in degree
$d\geq2$, the condition $(2)$ is padded to $(2,1^{d-2})$, while a term with
$d<2$ and $m>0$ vanishes.  Theorem~1 of
\cite{OkounkovPandharipande}, together with
$\overline{(2)}=(2)$, gives
$Z^\bullet_{\mathrm{GW}}=Z^\bullet_{\mathrm H}$.

Multiplication of $t_\alpha$ and $s_\beta$ concatenates the profiles under
disjoint union.  The stack weights give the exponential formula, so
$\log Z^\bullet_{\mathrm{GW}}$ generates connected stable maps, whereas
$\log Z^\bullet_{\mathrm H}$ generates connected, equivalently transitive,
Hurwitz covers; compare equations~(4.1)--(4.2) of the cited paper.  Equality
of the coefficients of $q^pt_\lambda s_\mu x^N/N!$ proves the equality in
\eqref{eq:tau-one-Hurwitz}.  The definition of $N$ is Riemann--Hurwitz for
that passport, and Theorem~\ref{thm:main} gives strict positivity after
identity profiles are deleted.
\end{proof}

\begin{example}\label{ex:degree-seven-GW}
Take
\[
 p=7,\qquad g=1,\qquad
 \lambda=(6,1),\qquad \mu=(4,3),\qquad (r_1,r_2)=(4,2).
\]
The defects satisfy
\[
 (7-2)+(7-2)+(4-1)+(2-1)=5+5+3+1=14=2\cdot7-2+2.
\]
Theorem~\ref{thm:GW-positive-sector} gives
\[
 \left\langle(6,1)\ \middle|\
  \tau_3(\omega)\tau_1(\omega)
  \middle|\ (4,3)\right\rangle^{\bullet,\mathbf P^1}_{1,7}>0.
\]
More precisely, the principal term alone gives the lower bound
\[
 \left\langle(6,1)\ \middle|\
  \tau_3(\omega)\tau_1(\omega)
  \middle|\ (4,3)\right\rangle^{\bullet,\mathbf P^1}_{1,7}
 \geq\frac1{3!}
 H^\circ_{1,7}\bigl((6,1),(4,3),(4,1^3),(2,1^5)\bigr)>0.
\]
\end{example}

Finally, define the relative partition function
\begin{equation}\label{eq:relative-GW-partition-function}
 \tau_{\mathbf P^1}(x,t,s)
 =\sum_{|\lambda|=|\mu|}t_\lambda s_\mu
 \left\langle\lambda\ \middle|\
 \exp\!\left(\sum_{j\geq0}x_j\tau_j(\omega)\right)
 \middle|\ \mu\right\rangle^{\bullet,\mathbf P^1},
\end{equation}
where $t_\lambda=\prod_i t_{\lambda_i}$ and similarly for $s_\mu$.
Okounkov--Pandharipande prove that $\tau_{\mathbf P^1}(x,t,s)$ is the
$n=0$ member of a sequence
$\{\tau_{\mathbf P^1}(x,t,s,n)\}_{n\in\mathbf Z}$ forming a $2$-Toda
tau-function, and that $\log\tau_{\mathbf P^1}$ generates the connected invariants
\cite[Section~4, especially equations~(4.1)--(4.2) and Theorem~4]
{OkounkovPandharipande}.  Theorem~\ref{thm:GW-positive-sector} therefore
identifies a strictly positive family of coefficients in the degree-$p$
part of its $n=0$ member: the two relative states are
arbitrary partitions of $p$, and every stationary index $j=r_i-1$ is odd
with $r_i\leq p$, subject only to
\eqref{eq:relative-stationary-dimension}.  Corollary~\ref{cor:tau-one-positive}
gives, in addition, a strictly positive degree-$p$ family in the connected
free energy $\log\tau_{\mathbf P^1}$ along the transposition direction
$x_1$.

The parity hypothesis in Theorem~\ref{thm:GW-positive-sector} is essential
for this argument.  For an odd completed cycle, the coefficient of the
empty partition in \eqref{eq:empty-completion-coefficient} need not vanish
and can have either sign.  Thus Corollary~\ref{cor:hurwitz-support} does not
by itself imply nonvanishing of every stationary Gromov--Witten invariant.
It gives full support in the ordinary ramification basis, while
Theorem~\ref{thm:GW-positive-sector} is the sector in the completed-cycle
basis where all boundary corrections have a controlled sign.\\

\noindent{\bf Acknowledgments.}
The authors acknowledge ChatGPT's assistance in the initial exploration of the proof of Theorem
1.1. Inspired by interactions with the authors in September 2026, they rigorously developed and verified the
argument, assuming full responsibility for the paper.

J.S.  is especially grateful to Prof. Bin Xu, who first introduced him to the prime-degree conjecture in 2016, when he was a Ph.D. student. Since then, he has continued to think about the problem and possible approaches to its resolution. He also thanks Michael Zieve for a very helpful discussion of the problem during Zieve’s visit to the University of Science and Technology of China. J.S. further extends his sincere thanks to Qing Chen, Weibo Fu, Mao Sheng, and Xiaotao Sun for valuable conversations throughout the extended gestation period of this work.

\sloppy

\vspace{0.5cm}

{\small {\sc  \noindent Jijian Song\\
Center for Applied Mathematics and KL-AAGDM\\
Tianjin University\\
Tianjin 300072 China}\\
jijian.song@tju.edu.cn\\

{\sc \noindent Hailin Wen\\
Wu Wen-Tsun Key Laboratory of Math, USTC, CAS\\
School of Mathematical Sciences\\
University of Science and Technology of China\\
Hefei 230026 China}\\
wenhailin@mail.ustc.edu.cn\\

{\sc \noindent Zebao Zhang\\
Mathematical Science Research Center\\
Chongqing University of Technology\\
Chongqing 400054 China}\\
zhangzebao@cqut.edu.cn}\\

\fussy

\end{document}